\title[The damped Euler--Monge--Amp\`ere system]{On the Damped Euler--Monge--Amp\`ere equations with Radial Symmetry:\\ Critical Thresholds and Large-Time Behavior}
\author{Kunhui Luan}
\address[Kunhui Luan]{\newline Department of Mathematics, \ 
 University of South Carolina, 1523 Greene St., Columbia, SC 29208, USA}
\email{kunhui.luan@sc.edu}
\thanks{\textit{Acknowledgment.} The author expresses sincere gratitude to Changhui Tan, Siming He, Qiyu Wu, Jesse Singh and Kyle Liss for numerous productive discussions and insightful feedback throughout the development of this work. This work has been partially supported by the NSF grant DMS-2238219 and the George Johnson Fellowships.}
\subjclass[2020]{35B30,\,35B40,\,35B51,\,35Q35,\,35L67}
\keywords{Euler--Monge--Amp\`ere system, radial symmetry, multidimension, critical thresholds, Lyapunov functions, Euler--Poisson equation, well-posedness}
\newtheorem{theorem}{Theorem}[section]
\newtheorem{lemma}[theorem]{Lemma}
\newtheorem{proposition}[theorem]{Proposition}
\theoremstyle{definition}
\theoremstyle{remark}
\newtheorem{remark}{Remark}[section]
\numberwithin{equation}{section}
\def\R{\mathbb{R}}
\def\T{\mathbb{T}}
\def\dd{\mathrm{d}}
\def\pa{\partial}
\def\eps{\varepsilon}
\def\CTsub{\widehat\Sigma_\flat} % Thresholds for (rho, G)
\def\CTsup{\widehat\Sigma_\sharp}
\def\L{\mathcal{L}}
\renewcommand{\geq}{\geqslant}
\renewcommand{\leq}{\leqslant}
\def\R{\mathbb{R}}
\def\pa{\partial}
\def\F{\mathbf{F}}
\def\f{f}
\def\O{\mathbf{\Omega}}
\def\u{\mathbf{u}}
\def\v{\mathbf{v}}
\def\x{\mathbf{x}}
\def\grad{\nabla}
\def\div{\grad\cdot}
\def\eps{\varepsilon}
\def\tr{\text{trace}}
\def\Id{\mathbb{I}}
\def\d{\textnormal{d}}
\numberwithin{equation}{section}
\newcommand\numberthis{\addtocounter{equation}{1}\tag{\theequation}}
\begin{document}
%%%%%%%%%%%%%%%%
\allowdisplaybreaks

\begin{abstract}
We investigate the global well-posedness and large-time dynamics of the pressureless Euler--Monge--Amp\`ere (EMA) system with velocity damping in multidimensions, subject to radially symmetric initial data. We first establish the phenomenon of critical thresholds, where subcritical initial data maintain global regularity, and supercritical initial data lead to finite time singularity formation. We provide two methods for constructing these thresholds: a refined spectral dynamics approach based on \cite{liu2002spectral} and a comparison principle based on Lyapunov functions introduced in \cite{bhatnagar2020critical2}.

A key finding of this work is that the inclusion of linear damping effectively removes the initial density lower bound previously required in the undamped case \cite{tadmor2022critical} in certain regimes, allowing for global regularity even in the presence of vacuum or arbitrarily low density. Furthermore, for subcritical initial data, we prove an optimal exponential decay rate to the equilibrium state. Our results unify and extend existing theories for 1D Euler--Poisson system and undamped multidimensional EMA system with radial symmetry.
%We study the 1D pressureless Euler-Poisson equations with variable background states and nonlocal velocity alignment. Our main focus is the phenomenon of critical thresholds, where subcritical initial data lead to global regularity, while supercritical data result in finite-time singularity formation. The critical threshold behavior of the Euler-Poisson-alignment (EPA) system has previously been investigated under two specific setups: 

%(1) when the background state is constant, phase plane analysis was used in \cite{bhatnagar2020critical2} to establish critical thresholds; and 
%(2) when the nonlocal alignment is replaced by linear damping,  comparison principles based on Lyapunov functions were employed in \cite{choi2024critical}.

%In this work, we present a comprehensive critical threshold analysis of the general EPA system, incorporating both nonlocal effects. Our framework unifies the techniques developed in the aforementioned studies and recovers their results under the respective limiting assumptions. A key feature of our approach is the oscillatory nature of the solution, which motivates a decomposition of the phase plane into four distinct regions. In each region, we implement tailored comparison principles to construct the critical thresholds piece by piece.
 \end{abstract}

\maketitle %\centerline{\date}

\tableofcontents

\section{Introduction}\label{sec:intro}
We study the following pressureless compressible Euler equation with two forces 
\begin{subequations}\label{eqs:EMA}
\begin{align}
 \pa_t\rho+\div(\rho\u)&=0,\label{eq:density}\\
 \pa_t(\rho\u)+\div(\rho\u\otimes\u)&=-\kappa\rho\grad\phi-\beta\rho\u,\label{eq:momentum}\\
 det(\Id-D^2\phi)&=\rho\label{eq:MA},
\end{align}
\end{subequations}
subject to initial data
\begin{equation}\label{eq:initial_data}
  \rho(\x,t=0) = \rho_0(\x),\; \u(\x,t=0)=\u_0(\x),\;\phi(\x,t=0) = \phi_0(\x).
\end{equation}
Here $\rho(\cdot,t): {\mathbb R}^n \mapsto {\mathbb R}_+$,
 $\u(\cdot,t): {\mathbb R}^n \mapsto {\mathbb R}^n$ and
 $\phi(\cdot,t): {\mathbb R}^n \mapsto {\mathbb R}$ denote the density, velocity and potential, respectively.  Without loss of generality, we fix the potential assuming $\phi(0)=0$. 
 
 The system is known as the damped \emph{Euler--Monge--Amp\`ere} (EMA) system. The right-hand side of the momentum equation \eqref{eq:momentum} includes two types of interacting forces: the \emph{attractive-repulsive force} modeled by the Monge-Amp\`ere equation and a \emph{velocity damping}. Both constants $\kappa$ and $\beta$ are taken to be positive, with the former representing a repulsive force and the latter characterizing the damping strength.

 The EMA system was first introduced in a discrete version by Y. Brenier in \cite{brenier2000derivation}, with a subsequent kinetic counterpart, the Vlasov--Monge--Amp\`ere system, proposed and analyzed in \cite{brenier2004geometric}. The development of the EMA system is fundamentally linked to the geometric framework established by V. Arnold in \cite{arnold1966geometrie}, who characterized the incompressible Euler equations on a fixed domain $\Omega$ as the geodesics of the right--invariant metric on the volume-preserving diffeomorphism group $\mathcal{D}_\mu(\Omega)$, based on the Lagrangian representation of fluid flows. Building on this, G. Loeper \cite{loeper2005quasi} formally introduced the EMA system as a geometric approximation to the incompressible Euler equations, specifically by approximating the geodesics on $\mathcal{D}_\mu(\Omega)$.

\subsection{Comparison with the Euler--Poisson system.}
In addition to the EMA system, G. Loeper considered the following classical Euler--Poisson (EP) system in the same work \cite{loeper2005quasi}, 
\begin{subequations}\label{eqs:EP}
\begin{align}
 \pa_t\rho+\div(\rho\u)&=0,\label{eq:density_EP}\\
 \pa_t(\rho\u)+\div(\rho\u\otimes\u)&=-\kappa\rho\grad\phi,\label{eq:momentum_EP}\\
 -\Delta \phi&=\rho-1\label{eq:Poisson}.
\end{align}
\end{subequations}
It was shown that both systems are asymptotically close and converge to the incompressible Euler equations in the quasi-neutral limit. Specifically, the EMA system can be viewed as a nonlinear generalization of the EP system. Indeed, we consider a perturbation near the equilibrium state $(\rho,\u) = (1,\mathbf{0})$, where the Monge--Amp\`ere equation linearizes to the Poisson equation,
\[
\rho = \det(I - \eps D^2\phi) = 1-\eps \Delta \phi + \mathcal{O}(\eps^2).
\]

The Euler--Poisson system models various physical phenomena, including semiconductor and plasma dynamics. The stability of the Euler-Poisson system near the equilibrium state was analyzed in
\cite{guo1998smooth,guo2011global,ionescu2013euler,guo2017absence,fajman2025critical}. Notably, in the one-dimensional case, the Monge--Amp\`ere
 equation \eqref{eq:MA} reduces to the Poisson equation $-\pa_{xx}\phi = \rho - 1$, rendering the EMA and EP systems equivalent. The global well-posedness theory for the 1D pressureless Euler-Poisson system with a constant background charge was first established in \cite{engelberg2001critical}. The global behavior of the solution exhibits a \emph{critical threshold} phenomenon---specifically, a dichotomy in the initial data: \emph{subcritical data} lead to global smooth solutions, while \emph{supercritical data} result in finite-time singularity formation (see Theorem~\ref{thm:thresholds} for precise definitions). Furthermore,
\cite{engelberg2001critical} provides an explicit and sharp expression for this threshold.

A generalization of the Euler-Poisson system arises when the background charge is not constant. The critical threshold phenomenon for this system has been recently investigated in \cite{bhatnagar2020critical, choi2025critical, rozanova2025repulsive, luan2025euler}. In particular, \cite{choi2025critical} establishes subcritical and supercritical regions under the presence of additional velocity damping, which was further extended in \cite{luan2025euler} to a nonlocal velocity alignment term. A series of \emph{comparison principles} were developed based on Lyapunov functions introduced in \cite{bhatnagar2020critical2}, leading to the identification of non-trivial subcritical and supercritical regions, which underlies one of the main approach in this paper.

 Several works have explored weak solutions to the Euler-Poisson system, particularly in 1D \cite{gangbo2009euler,natile2009wasserstein, brenier2013sticky, carrillo2023equivalence}. The system has also been studied with the inclusion of pressure \cite{guo1998smooth, tadmor2008global, guo2011global, fajman2025critical}. Global theory for smooth solutions is known for radially symmetric data \cite{wei2012critical,tan2021eulerian,carrillo2023existence,jang2012two}. For general data, critical thresholds are established for a 2D restricted Euler-Poisson (REP) system in \cite{tadmor2003critical, liu2002spectral}, but the system in two or three dimensions are less understood, and global regularity for the Euler-Poisson system remains an open question.

The close relationship between the EMA and EP systems naturally raises the question of whether a critical threshold phenomenon exists for the EMA system. An affirmative answer was first provided by E. Tadmor and C. Tan in \cite{tadmor2022critical}, who presented a comprehensive analysis on the global regularity of EMA system with radial symmetry. They established a sharp and explicit critical threshold phenomenon. A salient feature of their result, however, is that global regularity in $n$ dimensions necessitates a lower bound on the initial density, specifically $\rho_0 \geq 2^{-n}$ (See \cite[Remark 3.5]{tadmor2022critical}). In particular, the system necessarily experiences a vacuous blow up.

Building on the work of Tadmor and Tan \cite{tadmor2022critical}, this paper investigates the global regularity and large-time dynamics of the EMA system under radial symmetry with the inclusion of velocity damping. Our primary contribution is two-fold: we first derive  \emph{sharp} and \emph{explicit} critical thresholds in Theorem~\ref{thm: GWP}, ~\ref{thm: GWP_vac} and ~\ref{thm:GWP_lya}, and second, we establish an optimal uniform--in--time exponential decay rate of the solution toward the equilibrium state $(\rho,\u) = (1,\mathbf{0})$ in Theorem ~\ref{thm:decay}. We demonstrate that the introduction of damping effectively removes the initial density lower bound required in \cite{tadmor2022critical} in certain regimes. Specifically, we show that under \emph{strong} and \emph{critical} damping regimes, global regularity can be preserved even in the presence of vacuum or arbitrarily low initial density; see Remark~\ref{rmk:lowerbound}. 

We establish the threshold conditions through two complementary frameworks: first, by utilizing the spectral dynamics introduced in \cite{liu2002spectral} through a refined phase plane analysis; and second, via a comparison principle based on Lyapunov functions. Furthermore, leveraging the explicit phase plane analysis, we characterize the large-time asymptotic behavior of the system. Our results serve as a unifying framework: in the one-dimensional case, we recover the damped Euler--Poisson thresholds obtained in \cite{bhatnagar2020critical2}, while in the absence of damping, we retrieve the results of \cite{tadmor2022critical}.

\subsection{Outline of the paper.}
The remainder of this paper is organized as follows. In Section 2, we briefly review Eulerian dynamics with radial symmetry, following the general framework established in \cite{tan2021eulerian}, and present our main results for the damped EMA system. In Section 3, we derive the spectral dynamics governing the system. These dynamics are then utilized in Section 4 to construct the critical thresholds and analyze the large-time asymptotic behavior. This section also provides a detailed characterization of the critical thresholds for both vacuous and non-vacuous initial data. In Section 5, we present an alternative construction of the threshold regions using Lyapunov functions. Finally, an energy estimate proof of the local well-posedness result is provided in Appendix \ref{sec:LWP}.

\section{Background and statement of main results.}
In this paper, we focus on the class of no-swirl, radially symmetric solution of the form
\begin{equation}\label{eq:radial}
  \rho(\x,t)=\rho(|\x|,t),\quad\u(\x,t)=\frac{\x}{|\x|}u(|\x|,t), \quad\phi(\x,t)= \phi(|\x|,t), \quad r = |\x|.
\end{equation}
We denote the radial variable by $r$, and  $\rho$, $u$ and $\phi$ are scalar functions defined in $\R_+\times\R_+$. In addition, to ensure regularity of $\u$ and $\rho$ at the origin, we impose the following boundary conditions at $r = 0$
\begin{equation}\label{eq:radialzero}
  \pa_r\rho(0,t)=0,\quad u(0,t)=0,\quad \pa_r\phi(0,t)=0.
\end{equation}
We also restrict our attention to the case where 
\begin{equation}\label{eq: s:pos_ini}
    \phi''_0(r) \leq 1.
\end{equation}
In particular, this assumption implies, in combination with \eqref{eq:radialzero}, that $\frac{\phi_0'(r)}{r}\leq1$. The reason for this restriction will become apparent in the sequel, see Remark \ref{rmk:density}.

\subsection{Eulerian dynamics and the spectral dynamics} The damped EMA system belongs to the broad class of pressureless Eulerian dynamics
\begin{align*}
& \pa_t\rho+\div(\rho\u)=0,\\
& \pa_t(\rho\u)+\div(\rho\u\otimes\u)=\rho\F,
\end{align*}
where $\F := -\kappa \nabla \phi-\beta \u$ denotes the forcing term and $\phi$ is the potential that satisfies the Monge-Amp\'ere equation \eqref{eq:MA}. In the non-vacuous region, the momentum equation \eqref{eq:momentum} simplifies to
\begin{equation}\label{eq:velocity:non_vacuous}
\pa_t\u+(\u\cdot\grad)\u=\F.
\end{equation}
Under the assumption of radial symmetry, the force $\F$ is expressed as
\[
\F(\x,t) = \kappa\frac{\x}{r}\pa_r\phi(r,t)- \beta \frac{\x}{r}u(r,t),
\]
which ensures the persistence of radial symmetry over time. Furthermore, let $U$ be defined as
\begin{equation}\label{eq:potential_u}
    U(\x,t):=\int_0^{|\x|} u(s,t) \ \d s.
\end{equation}
We observe that $\u(\x,t) = \grad U(\x,t)$, implying the no-swirl (irrotational) solutions persist in time as well.
\subsubsection{1D motivation and Riccati structure}
If $\F \equiv 0$, \eqref{eq:velocity:non_vacuous} reduces to the classical \textit{inviscid Burgers equation}. It is well known that the solution leads to finite-time shock formation for any generic smooth initial data. Indeed, in one dimension, taking the spatial derivative of the equation yields 
$$(\partial_t + u \partial_x)(\partial_x u) = -(\partial_x u )^2.$$ 
Let $X(t)=X(t;x)$ denote the characteristics originating from $x\in\R$, defined by
\[\frac{\dd}{\dd t}X(t) = u\big(X(t),t\big),\quad X(0) = x.\]
Let $^\prime$ be the derivative along the characteristic path
\[f'(t): = \frac{\dd}{\dd t}f(X(t),t) = \pa_t f(X(t),t) + u(X(t),t)\,\pa_xf(X(t),t)).\]
we obtain $(\partial_xu(X(t),t))^\prime = -(\partial_x u(X(t),t))^2$. This is a Riccati type equation that encodes the solution structure: blowup occurs in a finite time if $\partial_x u_0(x) < 0$. The Riccati structure based analysis has been extended to various models, for example, see \cite{luan2025euler,bhatnagar2020critical2, kiselev2018global, tan2021eulerian, bhatnagar2023critical, do2018global, carrillo2016critical,choi2025critical}.

\subsubsection{Spectral dynamics in Multidimensions} To extend the above approach to multidimensions, we take the spatial gradient of \eqref{eq:velocity:non_vacuous} and obtain
\begin{equation}\label{eq:gradu}
  (\pa_t+\u\cdot\grad)\grad\u+(\grad\u)^{2}=\grad\F.
\end{equation}
The uniform boundedness of the n-by-n velocity gradient matrix $\|\nabla \u(\cdot, t)\|_{L^\infty}$ is of particular importance, since in many cases, this quantity plays a significant role in establishing global regularity. 

A natural candidate for capturing the 1D Riccati structure in higher dimensions is the set of eigenvalues of the velocity gradient $\nabla \u$, denoted by $\{\lambda_i\}_{i=1}^n$. In the unforced case $\F\equiv 0$, the dynamics of $\lambda_i$, known as the \emph{spectral dynamics}, exhibits the same Riccati structure as $\partial_x u$. Indeed, we observe
\[
(\partial_t+ \u\cdot \nabla )\lambda_i = -\lambda_i^2, \quad i=1,\cdots,n,
\]
which can be explicitly solved along the characteristics path. This approach was thoroughly studied in \cite{liu2002spectral}. While the dynamics of $\{\lambda_i\}_{i=1}^n$ enjoys the explicit Riccati structure, in the presence of nontrivial forcing terms, the spectral dynamics of \eqref{eq:gradu} becomes
\begin{equation}\label{eq:spectral:F}
  (\pa_t+\u\cdot\grad)\lambda_i=-\lambda_i^2+\langle {\mathbf l}_i, (\grad\F) {\mathbf r}_i\rangle\quad i=1,\cdots,n,
\end{equation}
where $(l_i,r_i)$ are the corresponding left and right eigenvectors of $\lambda_i$. We face an immediate problem of controlling the term $\langle {\mathbf l}_i, (\grad\F) {\mathbf r}_i\rangle$. As the forcing gradient $\grad\F$ may not share the same eigen-system as $\grad\u$. 

However, in the radial symmetry setting, the two matrices turn out to share the same eigen-system (see Lemma \ref{eigensystem}). This allows for the study of the spectral dynamics associated with \eqref{eq:gradu}. More precisely, we will analyze the dynamics of the following quantities 
\[
\bigg(u_0'(|\x|), \,\,\frac{u_0(|\x|)}{|\x|}, \,\,\phi_0''(|\x|),\,\,
    \frac{\phi_0'(|\x|)}{|\x|}\bigg).
\]
In particular, as established in Lemma \ref{lem:gradu}, the uniform boundedness of these quantities are key to global regularity.
\subsubsection{The divergence approach}
An alternative multidimensional analogue to $\pa_x u$ is \emph{the divergence}
\[
d:= \div \u = \tr(\grad \u) = \sum_i^n \lambda_i.
\]
Taking the trace of the \eqref{eq:gradu} yields its dynamics:
\begin{equation}\label{eq: div}
    (\pa_t+\u\cdot\grad)d = -\tr\left((\grad\u)^{\otimes2}\right)+\div\F.
\end{equation}
While $d$ is real-valued and couples more easily to forcing terms, as $\div \F$ is typically simpler to control than $l_i^T(\grad\F) r_i$, the explicit Riccati structure is lost because $\tr\left((\grad\u)^{\otimes2}\right)\neq d^2$, for $n\geq2$. 
The discrepancy is governed by the \emph{spectral gap} of the matrix
$\grad\u$, defined as
\begin{equation}\label{eq:spectralgap}
  \eta =\frac{1}{2}\sum_{i=1}^n\sum_{j=1}^n(\lambda_i-\lambda_j)^2.
\end{equation}
This gives
\begin{equation}\label{eq:diff}
  d^2-\tr\left((\grad\u)^{\otimes2}\right)=\frac{n-1}{n}d^2-\frac{1}{n}\,\eta.
\end{equation}
To extend 1D regularity result and utilize the Riccati structure via the divergence $d$, one must additionally control the spectral gap, which is often a delicate and difficult task. For instance, one approach involves decomposing the velocity gradient $\grad \u$ into its symmetric and anti-symmetric part and controlling the spectral gaps of these two matrices individually, as demonstrated in \cite{he2017global}. Further discussions on the dynamics of the divergence $d$ can be found \cite[Remark 2.3, Remark 2.6]{tan2021eulerian}.

In the radially symmetric setting, the divergence simplifies to
\begin{equation}\label{eq:divrelation}
  d=\div\u=u_r+(n-1)\frac{u}{r}.
\end{equation}
Then we find that the discrepancy in \eqref{eq:diff} becomes
\begin{equation}\label{eq:sgrelation}
  d^2-  \tr\left((\grad\u)^{\otimes2}\right)=
  2(n-1)u_r\frac{u}{r}+(n-1)(n-2)\frac{u^2}{r^2}.
\end{equation}
Because $d$ is a weighted sum of $u_r$ and $u/r$, this term involves quadratic expressions of $u_r$ and $\frac{u}{r}$ which cannot be determined solely by the \emph{local information} of $d$. However, a key observation is that under the radial symmetry, both the divergence of the spectral gap can be fully determined by $u_r$ and $\frac{u}{r}$. 
In fact, the spectral gap simplifies greatly to $\eta=\frac{1}{n-1}(u_r-\frac{u}{r})^2$.
Furthermore, unwrapping the divergence evolution equation \eqref{eq: div} gives
\[
\Big(u_r+(n-1)\frac{u}{r}\Big)' = -u_r^2 - (n-1)\frac{u^2}{r^2}-\kappa \phi''-\kappa(n-1)\frac{\phi'}{r}-\beta u_r - (n-1)\beta \frac{u}{r},
\]
where $'$ is the derivative along the characteristic path. This structure suggests the importance of the pairs $(u_r, \frac{u}{r})$ and $(\phi'',\frac{\phi'}{r})$. We will see shortly in Lemma \ref{eigensystem} that they are precisely the eigenvalues of the velocity gradient $\grad \u$ and the potential Hessian $D^2\phi$, respectively. As it turns out, controlling these four quantities is key to global regularity. This leads to the first result of this paper.

\subsection{Local well-posedness}
We state the following local well-posedness and regularity criterion result for the damped EMA system.

\begin{theorem}[Local well-posedness]\label{thm: LWP}
Let $s > \frac{n}{2}$. Consider the damped EMA system \eqref{eqs:EMA} with smooth radial initial data of the form
  \begin{equation}\label{eq:radialinit}
    \rho_0(\x)=\rho_0(r),\quad \u_0(\x)=\frac{\x}{r}u_0(r),\quad
    \phi_0(\x)=\phi_0(r),\quad r=|\x|.
  \end{equation}
    Assume  \[
  \O_0(|\x|) := \left[u_0'(|\x|), \,\,\frac{u_0(|\x|)}{|\x|}, \,\,\phi_0''(|\x|),\,\,
    \frac{\phi_0'(|\x|)}{|\x|} \right]^\top\in \big(H^s(\R^n)\big)^4, \qquad s> \frac{n}{2}.
    \]
    Then there exists a time $T>0$ such that the solution $\O(\x,t)$ satisfies
     \begin{equation}\label{eq:regularity}
    \O(|\x|,t)\in C([0,T],  H^{s}(\R^n))^4.
  \end{equation}
  Moreover, the life span $T$ can be extended as long as
  \begin{equation}\label{eq:BKM}
    \int_0^T \|\O(\cdot,t)\|_{L^\infty}\,{\d}t<+\infty.
  \end{equation}
\end{theorem}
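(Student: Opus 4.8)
The plan is to establish Theorem~\ref{thm: LWP} via the standard energy method for symmetric hyperbolic systems, adapted to the radial formulation. First I would rewrite the system in terms of the vector $\O = [u', u/r, \phi'', \phi'/r]^\top$. The key observation is that, under radial symmetry, the velocity gradient $\grad\u$ and the Hessian $D^2\phi$ are both diagonalized in the same orthonormal frame (Lemma~\ref{eigensystem}), so the evolution of $\O$ decouples from the angular variables and one obtains a quasilinear transport-type system $\pa_t \O + (\u\cdot\grad)\O = \mathcal{Q}(\O)$, where $\mathcal{Q}$ is a smooth (in fact polynomial) nonlinearity: the $u'$ and $u/r$ components pick up Riccati-type quadratic terms plus linear damping and coupling to $\phi'',\phi'/r$ through the Monge--Amp\`ere relation, while the $\phi''$, $\phi'/r$ components are slaved to the density via \eqref{eq:MA} and the continuity equation. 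Because the principal part is the scalar transport operator $\pa_t + \u\cdot\grad$ acting componentwise, the system is automatically symmetric hyperbolic, and the quasilinear local existence theory of Kato--Majda applies once $\u \in H^{s+1}$, equivalently $\O\in H^s$ with $s>n/2$ so that $H^s(\R^n)$ is an algebra.

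The main steps would be: (i) derive the closed evolution system for $\O$, verifying that all nonlinear terms are polynomial in the components of $\O$ (using that $\rho = \det(\Id - D^2\phi) = (1-\phi'')\,(1-\phi'/r)^{n-1}$ under radial symmetry, so $\rho$ and its relevant derivatives are polynomials in $\phi'',\phi'/r$); (ii) perform the basic $H^s$ energy estimate: apply $\Lambda^s = (1-\Delta)^{s/2}$, pair with $\Lambda^s\O$, and use the Kato--Ponce commutator estimate to control $\langle [\Lambda^s, \u\cdot\grad]\O, \Lambda^s\O\rangle$ by $C\|\grad\u\|_{L^\infty}\|\O\|_{H^s}^2 \le C\|\O\|_{L^\infty}\|\O\|_{H^s}^2$, together with the Moser-type estimate $\|\Lambda^s \mathcal{Q}(\O)\|_{L^2}\le C(\|\O\|_{L^\infty})\|\O\|_{H^s}$, yielding $\frac{\dd}{\dd t}\|\O\|_{H^s}^2 \le C(\|\O\|_{L^\infty})\|\O\|_{H^s}^2$; (iii) close the estimate on a short time interval $[0,T]$ by a continuity/bootstrap argument, obtaining existence; (iv) prove uniqueness and continuous dependence by an $L^2$ energy estimate on the difference of two solutions; and (v) extract the continuation criterion \eqref{eq:BKM}: since the Gr\"onwall factor in step (ii) depends on $\O$ only through $\|\O\|_{L^\infty}$, as long as $\int_0^T\|\O(\cdot,t)\|_{L^\infty}\dd t<\infty$ the $H^s$ norm cannot blow up, so the solution extends past $T$.

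I would expect the main obstacle to be the behavior at the origin $r=0$: the quantities $u/r$ and $\phi'/r$ are formally singular, and one must show that the boundary conditions \eqref{eq:radialzero} (namely $u(0,t)=0$, $\pa_r\phi(0,t)=0$) are propagated so that $u/r$ and $\phi'/r$ remain smooth functions of $\x\in\R^n$ (equivalently, smooth even functions of $r$ vanishing appropriately), and that the $H^s(\R^n)$ norms of these ratios are genuinely controlled. The clean way to handle this is to work intrinsically with the $\R^n$ objects $\grad\u$ and $D^2\phi$ rather than their radial profiles — since $\u = \grad U$ and $U$ is smooth, $u/r = \pa_r U / r$ extends smoothly, and similarly for $\phi'/r$ — so that the regularity of $\O$ is inherited from the Cartesian regularity of $(\u,\phi)$ and no genuine singularity is present. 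A secondary technical point is verifying that the Monge--Amp\`ere constraint is preserved by the flow (so that $\phi$ can be recovered from $\rho$ at each time, with the sign condition \eqref{eq: s:pos_ini} guaranteeing $\Id - D^2\phi$ stays positive definite and $\rho>0$ stays bounded, per Remark~\ref{rmk:density}); this is where the restriction $\phi_0''\le 1$ enters, ensuring the determinant representation of $\rho$ stays in the regime where the map $\phi\mapsto\rho$ is well-behaved. Once these structural points are in place, the energy estimates are routine.
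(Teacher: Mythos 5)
Your proposal is correct and follows essentially the same route as the paper's Appendix~\ref{sec:LWP}: derive the closed quasilinear transport system for $\O$ with polynomial (quadratic) nonlinearity, perform the $L^2$ and $\dot H^s$ energy estimates via Kato--Ponce commutator and Moser-type bounds to obtain $\frac{\dd}{\dd t}E_s \lesssim (1+\|\O\|_{L^\infty})E_s$, and conclude local existence plus the continuation criterion \eqref{eq:BKM} by Gr\"onwall and Sobolev embedding for $s>n/2$. Your additional remarks on regularity at the origin and on propagation of the Monge--Amp\`ere constraint are sensible supplements that the paper does not spell out, but they do not change the argument.
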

The statement of the theorem is a direct consequence of an a priori estimate. For the sake of completeness, we provide a proof of this result in Appendix \ref{sec:LWP}.

\subsection{Global regularity versus finite time blowup}
The next main result of this work concerns the phenomenon of critical thresholds. As implied by the regularity criterion \eqref{eq:BKM}, the global regularity depends on the boundedness of $\O(\x,t)$. In this paper, we demonstrate that the damped EMA system \eqref{eqs:EMA} exhibits a \emph{sharp} and \emph{explicit} critical threshold. Specifically, subcritical initial data ensure $\O$ remain bounded for all time, leading to global well--posedness via Theorem \ref{thm: LWP}. On the contrary, supercritical initial data results in finite time singularity formation, where the velocity gradient and density lose uniform boundedness.

We provide a more precise and explicit characterization of these threshold conditions via two equivalent constructions. The first, in Section 4, utilizes a direct phase plane analysis of the explicit solutions to the spectral dynamics. In particular, we characterize the threshold conditions both in the \emph{vacuous} and \emph{non-vacuous} settings, see Theorem \ref{thm: GWP} and \ref{thm: GWP_vac}. The second, in Section 5, employs a comparison principle based on a Lyapunov function approach.

\begin{theorem}[Critical thresholds]\label{thm:thresholds}
Consider the damped EMA system \eqref{eqs:EMA} with radial symmetry \eqref{eq:radial}. The following results hold:
 \begin{itemize}
  \item Global well-posedness: there exists a subcritical region $\CTsub\subset\R\times\R$, such that if the initial data satisfy
   \[\big(u_0'(r),\phi_0''(r)\big)\in \CTsub,\quad \forall~r>0,\]
   then the damped EMA system \eqref{eqs:EMA} has a unique global smooth solution, in the sense that \eqref{eq:regularity} holds for all time $T$.
  \item Finite time singularity formation: there exists a supercritical region $\CTsup = \CTsub^c$, such that if the initial data satisfy
  \[\exists~r_*>0 \quad\text{such that}\quad \big(u_0'(r_*),\phi_0''(r_*)\big)\in \CTsup,\]
   then smooth solution to the damped EMA system \eqref{eqs:EMA} only exists for $t\in [0, T_*)$ and moreover, the solution generates a singular shock at the blowup time $T_*$ and location $r_c=r(T_c; r_0)$, namely
   \begin{equation}\label{eq:singularshock}
	\lim_{t\to T_*^-}\pa_r u(r_*,t)=-\infty,\quad \lim_{t\to T_*^-}\rho(r_*,t)=\infty \ (\textnormal{or} \ 0).     
   \end{equation}
 \end{itemize}
\end{theorem}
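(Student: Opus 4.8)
The plan is to reduce the multidimensional dynamics to a finite-dimensional ODE system along characteristics and then perform a phase-plane analysis. First I would invoke the radial structure: by Lemma~\ref{eigensystem} (stated above), the velocity gradient $\grad\u$ and the Hessian $D^2\phi$ share the same eigen-system, with eigenvalues $(u_r, \frac{u}{r})$ for $\grad\u$ (the latter with multiplicity $n-1$) and $(\phi'', \frac{\phi'}{r})$ for $D^2\phi$. Using the Monge--Amp\`ere constraint $\det(\Id - D^2\phi) = \rho$ together with the continuity equation, one obtains a closed ODE system along the characteristic path $X(t;r_0)$ for the four scalars $p := u_r$, $q := u/r$, $s := \phi''$, $w := \phi'/r$; schematically, $p' = -p^2 - \kappa s - \beta p$, $q' = -q^2 - \kappa w - \beta q$, and the $\phi$-equations couple $s,w$ back to $p,q$ through the Monge--Amp\`ere relation and $\rho = (1-s)(1-w)^{n-1}$. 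The key structural point — which I expect the paper exploits and which makes the problem tractable — is that the $(p,s)$ pair and the $(q,w)$ pair decouple into two essentially independent planar Riccati-type systems, each of the same type as the 1D damped Euler--Poisson system. Crucially, the restriction $\phi_0'' \le 1$ (hence $\phi_0'/r \le 1$) ensures $s, w < 1$ initially, which keeps $\rho > 0$ and is propagated by the flow; this is what Remark~\ref{rmk:density} is flagging.

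Next I would carry out the phase-plane analysis for the planar system, say in the $(p,s)$ variables. After a suitable change of variables (the natural one being to track $1-s$, or a logarithmic substitution that linearizes the Monge--Amp\`ere nonlinearity), the trajectories can be integrated explicitly or shown to lie on explicit invariant curves. The subcritical region $\CTsub$ is then defined as the set of initial $(p,s)$ whose forward trajectory stays in the region where $p$ remains bounded below (equivalently, $u_r$ does not reach $-\infty$) and $s$ stays below $1$ (equivalently, $\rho$ stays bounded away from $0$ and $\infty$). Concretely, one identifies the stable manifold / separatrix of the equilibrium $(0,0)$ of the damped planar system and shows every trajectory starting on the good side converges to $(0,0)$, while every trajectory on the bad side has $p \to -\infty$ in finite time, which forces $u_r \to -\infty$ and, through $\rho = (1-s)(1-w)^{n-1}$ and the evolution of $\rho$ along characteristics, $\rho \to \infty$ or $\rho \to 0$. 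The same analysis applied to $(q,w)$ gives the analogous dichotomy; one takes $\CTsub$ to be the set where both planar initial conditions are subcritical (and one must check the compatibility condition linking $q_0 = u_0/r$ to $p_0 = u_0'$ at each radius, and the boundary condition at $r=0$).

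To assemble the theorem: on the subcritical side, boundedness of $(p,q,s,w) = \O$ uniformly in $r$ and $t$ follows from the phase-plane convergence together with uniform-in-$r_0$ control of the separatrix; then the continuation criterion \eqref{eq:BKM} of Theorem~\ref{thm: LWP} upgrades the local solution to a global one, giving \eqref{eq:regularity} for all $T$. On the supercritical side, if at some radius $r_*$ the pair $(u_0'(r_*), \phi_0''(r_*))$ lies in $\CTsup$, the planar trajectory through that point blows up at a finite time $T_*$ with $p = \pa_r u(r_*, \cdot) \to -\infty$; tracking $\rho$ along the same characteristic (it satisfies $\rho' = -d\,\rho$ with $d = p + (n-1)q$) yields the stated $\rho \to \infty$ or $0$ alternative in \eqref{eq:singularshock}, and the blowup location is $r_c = r(T_c; r_0)$ by definition of the characteristic flow. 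The main obstacle I anticipate is the \emph{uniformity in $r_0$} and the handling of the coupling/compatibility between the $(p,s)$ and $(q,w)$ subsystems: individually each is a clean planar Riccati analysis, but one must verify that the subcritical region for the full four-dimensional system — intersected over all radii, and respecting the constraint that $q_0$ and $p_0$ come from the same profile $u_0$ — is nonempty, explicitly describable, and sharp (i.e. its complement genuinely produces blowup, with no gap). Verifying sharpness is where the delicate separatrix estimates and the $\phi_0'' \le 1$ threshold really do their work.
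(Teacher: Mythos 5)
Your overall strategy coincides with the paper's: reduce to the spectral dynamics $(p,\mu)=(\partial_r u,\partial_r^2\phi)$ and $(q,\nu)=(u/r,\partial_r\phi/r)$ along characteristics, integrate the resulting planar systems, locate the distinguished trajectory separating bounded orbits from those that blow up, and close with the continuation criterion \eqref{eq:BKM}. However, the step you defer as ``the main obstacle'' --- the compatibility between the $(p,\mu)$ and $(q,\nu)$ subsystems --- is not optional and is left genuinely unresolved in your proposal. The theorem defines the subcritical region purely in terms of $(u_0'(r),\phi_0''(r))$, with no hypothesis on $(u_0(r)/r,\phi_0'(r)/r)$, so you must explain why subcriticality of the radial pair alone controls the tangential pair. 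The paper's resolution is elementary but essential: since $u(0,t)=0$ and $\partial_r\phi(0,t)=0$, integration in $r$ gives $|q(r,t)|\le\|p(\cdot,t)\|_{L^\infty}$ and $|\nu(r,t)|\le\|\mu(\cdot,t)\|_{L^\infty}$ (Lemma \ref{lem:gradu} and the analogous bound \eqref{eq:bound:nu}), so uniform bounds on $(p,\mu)$ automatically bound $(q,\nu)$, hence $\grad\u$ and $\rho=(1-\mu)(1-\nu)^{n-1}$. Without this, your ``intersection over all radii respecting the constraint that $q_0$ and $p_0$ come from the same profile'' does not reduce to a condition on $(u_0',\phi_0'')$ alone, and the theorem as stated would not follow.

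Two further corrections. First, the closed radial system is $p'=-p^2-\kappa\mu-\beta p$, $\mu'=p(1-\mu)$, and the linearization is achieved by the specific substitution $w=p/(1-\mu)$, $s=1/(1-\mu)$, which yields the linear damped oscillator $s''+\beta s'+\kappa s=\kappa$; a logarithmic substitution does not do this, and the explicit thresholds in all three damping regimes rest entirely on this change of variables, so it must be exhibited. Second, the threshold curve is not a stable manifold of the equilibrium: in the $(w,s)$ variables the equilibrium $(0,1)$ attracts \emph{every} trajectory, and the separatrix is the backward orbit through the degenerate point $(w,s)=(0,0)$ where $s=s'=0$; blow-up corresponds to $s$ reaching $0$ in finite forward time, whence $\mu\to-\infty$, $p=w/s\to-\infty$ and $\rho\to\infty$. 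Finally, the substitution degenerates when $\phi_0''\equiv 1$ (vacuum), which the paper treats separately via the autonomous Riccati equation $p'=-p^2-\beta p-\kappa$; this is precisely where the ``$\rho\to 0$'' alternative in \eqref{eq:singularshock} and the universal blow-up in the weakly damped regime come from, and your proposal omits it.
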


\subsection{Large-time behavior}
The large-time dynamics of pressureless Eulerian systems have been recently discussed in \cite{tadmor2025large, carrillo2025exponential}. In particular, \cite{carrillo2025exponential} provides a rigorous and comprehensive analysis of systems coupled with nonlocal velocity alignment and various interaction forces. In the presence of velocity damping, it is natural to anticipate the asymptotic convergence of global classical solutions toward an equilibrium state. In this work, we establish the large-time behavior for classical solutions to the damped EMA system \eqref{eqs:EMA}, with optimal exponential rates of convergence. 

Here, we use the notation $A \lesssim B$ to mean $A\leq CB$ for some positive constant $C$ which depends on up to parameters $\beta, \kappa,$ dimension $n$, the initial data and in the case of critically damped regime, some small $\eps>0$.

\begin{theorem}[Large Time Behavior]\label{thm:LTB_intro}
Let $(\rho, \mathbf{u}, \phi)$ be a global classical solution of \eqref{eqs:EMA}. Then the following exponential decay holds for all $t \geq 0$:
\begin{equation}
     \|\grad\u(\cdot, t), \u(\cdot, t), \rho(\cdot,t)-1, \pa_r^2 \phi(\cdot,t), \pa_r\phi(\cdot,t)\|_{L^\infty(\R^n)} \lesssim e^{-\gamma t}.
\end{equation}
The precise dependence of the decay rate $\gamma$ on the parameters $(\beta, \kappa)$, the initial data and $\eps>0$ is provided in Theorem \ref{thm:decay}.
\end{theorem}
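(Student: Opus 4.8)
\textbf{Proof proposal for Theorem~\ref{thm:LTB_intro}.}

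The plan is to reduce the large-time behavior of the full EMA system to the large-time behavior of the four scalar quantities $(u_r, u/r, \phi'', \phi'/r)$ appearing in the spectral dynamics, and then to extract exponential decay from an explicit phase-plane analysis of the resulting ODE system along characteristics. First I would recall from Lemma~\ref{eigensystem} (referenced in the excerpt) that under radial symmetry the matrices $\grad\u$ and $D^2\phi$ are simultaneously diagonalizable, with eigenvalues $\{u_r, u/r, \dots, u/r\}$ and $\{\phi'', \phi'/r, \dots, \phi'/r\}$ respectively, so that $\|\grad\u\|_{L^\infty} \lesssim \|(u_r, u/r)\|_{L^\infty}$ and $\|\pa_r^2\phi, \pa_r\phi/r\|_{L^\infty} \lesssim \|(\phi'', \phi'/r)\|_{L^\infty}$. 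The Monge--Amp\`ere relation $\rho = \det(\Id - D^2\phi) = (1-\phi'')(1-\phi'/r)^{n-1}$ then gives $|\rho - 1| \lesssim \|(\phi'', \phi'/r)\|_{L^\infty}$ once we know these are small (and bounded away from making a factor negative, which is where the standing assumption $\phi_0'' \le 1$ and its propagation enter). Finally $\u(\x,t) = \frac{\x}{|\x|} u(r,t)$ and the decay of $u$ itself follows from the damped transport structure $u' = -\beta u - \kappa \phi'$ along characteristics together with the decay of $\phi'$. So the whole theorem collapses to: show $(u_r, u/r, \phi'', \phi'/r)$ decay exponentially, uniformly in $r$.

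For that core estimate I would work along the characteristic $X(t; r_0)$ and use the closed ODE system that the spectral dynamics produces. Writing $p = u_r$, $q = u/r$, and using that the Monge--Amp\`ere equation lets one express $\phi''$ and $\phi'/r$ algebraically in terms of $\rho$ (hence in terms of $q$ via the continuity equation in radial Lagrangian form, or in terms of the flow map Jacobian), one obtains a planar autonomous-in-structure system for $(p, q)$ with damping coefficient $\beta$ and a linear restoring term of strength $\kappa$ coming from the linearization of $\det(\Id - D^2\phi)$ near the equilibrium. The key point is that Section~4's phase-plane analysis already produces \emph{explicit} solution formulas (or at least explicit invariant regions and Lyapunov functionals) for the subcritical regime; I would linearize this system about the fixed point $(p,q,\phi'',\phi'/r) = (0,0,0,0)$, compute the eigenvalues of the linearization — which for the scalar damped-Riccati-type equation $w' = -w^2 - \beta w - \kappa(\cdot)$ are governed by the roots of $\lambda^2 + \beta\lambda + \kappa = 0$, i.e. $\lambda_\pm = \tfrac{1}{2}(-\beta \pm \sqrt{\beta^2 - 4\kappa})$ — and read off the decay rate $\gamma$ as (essentially) $-\mathrm{Re}\,\lambda_+$, with the critically-damped case $\beta^2 = 4\kappa$ forcing the $\eps$-loss because of the algebraic (polynomial-in-$t$) factor $t e^{-\beta t/2}$. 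To upgrade from linearized decay to genuine nonlinear decay I would use the invariant-region/barrier argument from Section~4: once the trajectory enters a small neighborhood of the origin (which it does in finite time, by the subcritical global-regularity bound), the nonlinear terms are dominated by the linear ones and a comparison with the scalar ODE $\st' \le -\gamma \st + C\st^2$ (or an explicit super-solution) closes the bootstrap, giving $\|(u_r, u/r, \phi'', \phi'/r)(\cdot, t)\|_{L^\infty} \lesssim e^{-\gamma t}$ uniformly in $r_0$ because the bound is characteristic-wise with constants independent of $r_0$.

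The main obstacle I expect is \emph{uniformity in the radial variable $r_0$ together with the behavior at the origin and at spatial infinity}. Along each characteristic the decay is clean, but the constant $C$ in the nonlinear comparison and the ``time to enter the small neighborhood'' both a priori depend on the initial data $(u_0'(r_0), u_0(r_0)/r_0, \dots)$, and one must use the global-regularity bounds (subcriticality of $\CTsub$) to get these uniform in $r_0$ — in particular one needs the invariant region $\CTsub$ to be, up to the linear damping, \emph{forward-contracting toward the origin at a rate independent of where in $\CTsub$ the data started}. A secondary technical point is handling the coupling between the $(p,q)$ block and the $(\phi'', \phi'/r)$ block: since $\phi''$ and $\phi'/r$ are slaved to the density (hence to the flow map), their decay is \emph{inherited} from the decay of the Jacobian toward $1$, which in turn is controlled by $\int_0^\infty \|d(\cdot,s)\|_{L^\infty}\,\dd s < \infty$; making this quantitative with the sharp rate $\gamma$ requires care, and is likely where the precise statement of Theorem~\ref{thm:decay} does the real bookkeeping. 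I would also need to verify that $\u$ itself (not just $\grad\u$) decays — this is immediate from $u(0,t)=0$ and $|u(r,t)| \le r\,\|u/r\|_{L^\infty} $ only on bounded balls, so for the global $L^\infty(\R^n)$ bound on $\u$ one instead integrates $\frac{\dd}{\dd t}|u|^2 \le -2\beta|u|^2 + 2\kappa|u||\phi'|$ along characteristics and uses Gr\"onwall with the already-established decay of $\phi'$.
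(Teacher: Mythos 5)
Your reduction of the theorem to the uniform decay of the four spectral quantities $(u_r, u/r, \phi'', \phi'/r)$ is exactly the paper's, and your treatment of the derived quantities ($\rho-1$ via the product formula, $\u$ via Gr\"onwall on $u' = -\beta u - \kappa\pa_r\phi$ along characteristics) is sound --- indeed the paper is more terse than you are about why $\|\u\|_{L^\infty}$ itself decays. But for the core estimate you propose a genuinely different, and weaker, route than the paper: linearize about the equilibrium and close a nonlinear bootstrap $\st' \le -\gamma\st + C\st^2$. The paper instead observes that the change of variables \eqref{transformation:ws}, $w = p/(1-\mu)$, $s = 1/(1-\mu)$, converts the coupled Riccati system \eqref{eq:pmu} into the \emph{exactly linear} constant-coefficient system \eqref{dynamics: w_s}, which it solves in closed form in each damping regime. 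The decay of $(p,\mu) = (w/s,\,(s-1)/s)$ is then read off directly, since subcriticality guarantees $s(t)$ stays bounded away from $0$ and converges to $1$. This makes every obstacle you flag disappear: there is no ``time to enter a small neighborhood,'' no dependence of constants on where in $\CTsub$ the characteristic starts (the constants depend only on $(w_0,s_0)$, which are uniformly bounded since $\O_0$ is), and no separate ``slaving'' argument for $(\phi'',\phi'/r)$ --- the pairs $(p,\mu)$ and $(q,\nu)$ each form closed systems of identical form, so $\mu$ and $\nu$ decay for free.

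The genuine gap in your route is the sharpness of the rate. Theorem \ref{thm:decay} asserts a dichotomy in the strong-damping regime: the generic rate is $\lambda_1 = \tfrac{1}{2}(\beta - \sqrt{\beta^2-4\kappa})$, but the faster rate $\lambda_2$ is achieved precisely when $\frac{p_0 + \lambda_2\mu_0}{1-\mu_0} = 0$, i.e., when the coefficient of the slow mode in \eqref{eq:s_strong} vanishes. A linearization-plus-bootstrap argument cannot see this codimension-one improvement, and in the critically damped case it would not by itself separate the clean rate $\beta/2$ (available when $w_0+\alpha(s_0-1)=0$) from the $\eps$-lossy rate $\beta/2-\eps$ forced by the $te^{-\alpha t}$ factor. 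More importantly, your bootstrap presumes a single scalar comparison quantity whose nonlinear remainder is quadratic in itself; for the 2D coupled system $(p,\mu)$ with a spiral (weak damping) or degenerate (critical damping) linearization, building such a quantity without losing an $\eps$ in the rate is nontrivial, and it is precisely what the exact linearization renders unnecessary. Rework the argument to use the explicit solutions of \eqref{dynamics: w_s} that you already cite from Section 4, rather than treating them as merely giving ``invariant regions.''
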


\section{The damped Euler--Monge--Amp\`ere system with radial symmetry.}
In this section, we discuss the implications of radial symmetry and derive the spectral dynamics for the damped EMA system~\eqref{eqs:EMA}. 
We first establish that, under radial symmetry, the velocity gradient~$\nabla \mathbf{u}$ and the forcing gradient~$\nabla \mathbf{F}$ share the same eigen-system. More generally, we show that the Hessians of all radially symmetric scalar fields share the same eigensystem.

\begin{lemma}\label{eigensystem}
    Let $f:\mathbb{R}^n\to\mathbb{R}$ be a radial scalar function $f(\x)=f(r)$ with $r=|x|$ and $f\in C^2((0,\infty))$.
Then for any $\x\not =\mathbf{0}$, we have
\[
D^2 f(x)=\frac{f'(r)}{r}\,I
+\Big(\f''(r)-\frac{f'(r)}{r}\Big)\frac{xx^\top}{r^2}.
\]
Moreover, $D^2 f(x)$ has eigenvalues

\smallskip\noindent 
  $\bullet \ \displaystyle \lambda_1(D^2\f)=\f''(r)$ associated with the eigenvector $\v_1=\x$;\newline
  %\item 
  $\bullet \ \displaystyle \lambda_i(D^2\f)=\frac{\f'(r)}{r}$,\  $i = 2,\dots,n$, associated with
   $\{\v_2,\cdots, \v_n\}$ which span $\{\x^\perp\}$.
\end{lemma}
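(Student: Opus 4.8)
The plan is to prove the pointwise formula for $D^2 f(\mathbf{x})$ by direct differentiation and then read off the eigenvalues. First I would compute $\partial_i f = f'(r)\,\frac{x_i}{r}$ using $\partial_i r = x_i/r$. Then differentiating again, $\partial_j\partial_i f = f''(r)\,\frac{x_i x_j}{r^2} + f'(r)\,\partial_j\!\left(\frac{x_i}{r}\right)$, and since $\partial_j(x_i/r) = \frac{\delta_{ij}}{r} - \frac{x_i x_j}{r^3}$, this yields
\[
\partial_j\partial_i f = \frac{f'(r)}{r}\,\delta_{ij} + \left(f''(r) - \frac{f'(r)}{r}\right)\frac{x_i x_j}{r^2},
\]
which is exactly the claimed identity $D^2 f = \frac{f'(r)}{r} I + \left(f''(r) - \frac{f'(r)}{r}\right)\frac{\mathbf{x}\mathbf{x}^\top}{r^2}$.

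For the eigenvalue statement, the key observation is that $\frac{\mathbf{x}\mathbf{x}^\top}{r^2}$ is the rank-one orthogonal projection onto $\mathrm{span}\{\mathbf{x}\}$: it acts as the identity on $\mathbf{x}$ and annihilates $\mathbf{x}^\perp$. Hence, applying $D^2 f$ to $\mathbf{v}_1 = \mathbf{x}$ gives $\frac{f'(r)}{r}\mathbf{x} + \left(f''(r) - \frac{f'(r)}{r}\right)\mathbf{x} = f''(r)\,\mathbf{x}$, so $\lambda_1 = f''(r)$ with eigenvector $\mathbf{x}$. For any $\mathbf{v} \perp \mathbf{x}$, the projection term vanishes and $D^2 f\,\mathbf{v} = \frac{f'(r)}{r}\mathbf{v}$, giving the $(n-1)$-fold eigenvalue $\frac{f'(r)}{r}$ on the orthogonal complement $\{\mathbf{x}^\perp\}$. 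Choosing any orthonormal basis $\{\mathbf{v}_2,\dots,\mathbf{v}_n\}$ of this hyperplane completes the spectral decomposition, and since the eigenvectors span $\mathbb{R}^n$ these are all the eigenvalues with correct multiplicities.

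This argument is entirely elementary; there is no real obstacle. The only point requiring a small amount of care is the regularity/domain issue: the formula is only asserted for $\mathbf{x} \neq \mathbf{0}$ (consistent with $f \in C^2((0,\infty))$), so all computations should be understood on $\mathbb{R}^n \setminus \{\mathbf{0}\}$ where $r$ is smooth and $x_i/r$ is well-defined. I would simply state this restriction at the outset. The slightly more conceptual part worth highlighting for later use in the paper is the consequence that \emph{every} Hessian of a radial function is diagonalized by the same frame — the radial direction $\mathbf{x}/r$ together with any fixed basis of its orthogonal complement — which is precisely what makes the spectral dynamics approach of \cite{liu2002spectral} tractable here, since $\nabla\mathbf{u} = D^2 U$ and $\nabla\mathbf{F}$ (built from $D^2\phi$ and $D^2 U$) then share this common eigenframe.
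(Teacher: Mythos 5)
Your proposal is correct and follows essentially the same route as the paper: direct differentiation to obtain the Hessian formula, then reading off the eigenvalues from the rank-one projection structure of $\x\x^\top/r^2$ (the paper verifies $(\lambda I - D^2 f)\v = 0$ while you compute $D^2 f\,\v = \lambda\v$ directly, which is the same check). No gaps.
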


\begin{proof}
Denoting $\pa_i = \frac{\pa}{\pa{x_i}}$, observe under the radial symmetry
\[
\pa_if(\x)=f'(r)\,\pa_i r=f'(r)\,\frac{x_i}{r}.
\]
Differentiate again to obtain
\begin{align*}
    \partial^2_{ij} f(\x)   &=f''(r)\,\frac{x_i x_j}{r^2} +f'(r)\,\frac{\delta_{ij}}{r} - \frac{x_i x_j}{r^3} \\
    &=\frac{f'(r)}{r}\,\delta_{ij} +\Big(f''(r)-\frac{f'(r)}{r}\Big)\frac{x_i x_j}{r^2}.
\end{align*}
Write in matrix form to obtain
\[
D^2 f(\x)=\frac{f'(r)}{r}\,I
+\Big(f''(r)-\frac{f'(r)}{r}\Big)\frac{\x\x^\top}{r^2}.
\]
We verify the eigen-system as stated in the lemma
\begin{align*}
   (\lambda_1I - D^2f(\x))\x &=  \bigg[ \bigg(f''(r)-\frac{f'(r)}{r}\bigg)I - \bigg(f''(r)-\frac{f'(r)}{r}\bigg)\frac{\x\x^T}{r^2} \bigg]\x\\
   &=\bigg(f''(r)-\frac{f'(r)}{r}\bigg)\x -  \bigg(f''(r)-\frac{f'(r)}{r}\bigg)\frac{\x\x^T\x}{r^2} = 0,\\
   (\lambda_2I-D^2f(\x))\v_i &=\bigg(f''(r)-\frac{f'(r)}{r}\bigg)\frac{\x \x^T\v_i}{r^2} = 0, \quad \forall \ i\in \{2,\dots,n\}.
\end{align*}
The last line follows from $\v_i \in \x^\perp$, for all $i\in \{2,\dots,n\}$. This completes the proof.
\end{proof}
Consequently, the velocity gradient and the forcing gradient
\[
\grad \u(\x,t) = D^2 U(r,t), \ \ \ \grad \F(\x,t) = -\kappa D^2 \phi(r) - \beta D^2U(r,t),
\]
share the same eigenvectors. Here, $U$ is defined in \eqref{eq:potential_u}. This fact allows us to utilize the explicit Riccati structure in the spectral dynamics \eqref{eq:spectral:F}. Hence, we propose to use the pair
\begin{equation}\label{defn:pq}
    \Big(p(r,t),q(r,t)\Big):= \bigg(\pa_ru(r,t), \frac{u(r,t)}{r}\bigg)
\end{equation}
as the multidimensional analogue of $\pa_x u$ in 1D. Similarly, we use
\begin{equation}\label{defn:munu}
    \Big(\mu(r,t), \nu(r,t)\Big) := \bigg(\pa_r^2\phi(r,t),\frac{\pa_r\phi(r,t)}{r}\bigg),
\end{equation}
to denote the eigenvalues of the potential gradient $D^2\phi(r,t)$. 

Let $':=\pa_t+u(r,t)\pa_r$ denote differentiation along the characteristics paths, then the spectral dynamics \eqref{eq:spectral:F} becomes
\begin{equation}\label{eq:pq}
    \begin{cases}
        p' = -p^2-\kappa \mu - \beta p,\\
        q' = -q^2 - \kappa\nu - \beta q.
    \end{cases}
\end{equation}
A remarkable fact of studying the pair $(p,q)$ is that the boundedness of the radial derivative $p=\pa_ru(r,t)$ is sufficient to guarantee the boundedness of $\grad\u$, as demonstrated in the next lemma.
\begin{lemma}\label{lem:gradu}
 Consider the radial velocity field
 \eqref{eq:radial},\eqref{eq:radialzero},
 where $\displaystyle \u(\x,t)=\frac{\x}{r}u(r,t)$ and $u(0,t)=0$.   Then
  \begin{equation}\label{eq:gradup}
    \|\grad\u(\cdot,t)\|_{L^\infty}\leq\|\pa_ru(r,t)\|_{L^\infty}.
  \end{equation}
\end{lemma}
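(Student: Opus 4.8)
The plan is to prove the bound $\|\grad\u(\cdot,t)\|_{L^\infty}\leq\|\pa_r u(r,t)\|_{L^\infty}$ by exploiting the explicit structure of the velocity gradient under radial symmetry, which by Lemma~\ref{eigensystem} (applied to the potential $U$ with $\u=\grad U$) has exactly two distinct eigenvalues: $p=\pa_r u$ with multiplicity one and $q=u/r$ with multiplicity $n-1$. Since $D^2 U$ is symmetric, it is diagonalizable with orthonormal eigenvectors, so $\|\grad\u(\x,t)\|$ (the operator norm, which is what $L^\infty$ refers to here pointwise in $\x$) equals $\max\{|p(r,t)|,\,|q(r,t)|\}$ at each point $\x$ with $r=|\x|$. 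Thus it suffices to show $|u(r,t)/r|\leq \|\pa_r u(\cdot,t)\|_{L^\infty}$ pointwise in $r$.

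The key step is therefore a Hardy-type pointwise estimate: because $u(0,t)=0$, we can write $u(r,t)=\int_0^r \pa_s u(s,t)\,\d s$, and hence
\[
\left|\frac{u(r,t)}{r}\right| = \frac{1}{r}\left|\int_0^r \pa_s u(s,t)\,\d s\right| \leq \frac{1}{r}\int_0^r |\pa_s u(s,t)|\,\d s \leq \|\pa_r u(\cdot,t)\|_{L^\infty}.
\]
Combining this with $|p(r,t)|=|\pa_r u(r,t)|\leq\|\pa_r u(\cdot,t)\|_{L^\infty}$ gives $\max\{|p|,|q|\}\leq\|\pa_r u(\cdot,t)\|_{L^\infty}$ at every $r>0$, and since this holds for all $r$, taking the supremum over $\x$ yields \eqref{eq:gradup}. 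One should also note the limiting behavior at the origin: by the boundary condition and smoothness, $q(0,t)=\lim_{r\to 0}u(r,t)/r=\pa_r u(0,t)=p(0,t)$, so the matrix $\grad\u(\mathbf{0},t)$ is a multiple of the identity and the bound holds trivially there too.

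I do not expect a serious obstacle here; the main thing to be careful about is the precise meaning of the norm on the left-hand side. One must confirm that $\|\grad\u(\cdot,t)\|_{L^\infty}$ denotes the essential supremum over $\x$ of the spectral (operator) norm of the matrix $\grad\u(\x,t)$, and that for a symmetric matrix this operator norm coincides with the largest absolute eigenvalue — this is where the symmetry of $D^2 U$ (equivalently, the irrotationality of $\u$ established via \eqref{eq:potential_u}) is essential, since for a general non-symmetric matrix the operator norm can exceed the spectral radius. Once that identification is in place, the argument is just the eigenvalue computation from Lemma~\ref{eigensystem} followed by the one-line Hardy inequality above.
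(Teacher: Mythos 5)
Your proposal is correct and follows essentially the same route as the paper: the paper also uses the radial Hessian structure $\grad\u = qI + (p-q)\x\x^\top/r^2$ (bounding the quadratic form $\langle\grad\u\,\mathbf w,\mathbf w\rangle=(1-\theta)q+\theta p$ over unit vectors, which is just the quadratic-form phrasing of your eigenvalue/operator-norm identification), followed by the identical Hardy-type bound $|u(r,t)/r|\le\|\pa_r u(\cdot,t)\|_{L^\infty}$ from $u(0,t)=0$. If anything, your explicit insistence on absolute values and on the symmetry of $D^2U$ is slightly more careful than the paper's write-up, which only records the one-sided bound $\le\max\{p,q\}$.
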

\begin{proof}
    To verify \eqref{eq:gradup} recall that $\grad\u(\x,t)$ is given by the radial Hessian
\[
  \grad\u(\x,t) = D^2U(r)=q(r,t)\,I +
    \big(p(r,t)-q(r,t)\big)\,\frac{\x\,\x^\top}{r^2},
    \]
    where $(p(r,t),q(r,t)) = (\pa_ru(r,t), \frac{u(r,t)}{r})$ is as defined in \eqref{eq:pq}. Observe $\grad \u$ thus defines a bilinear form. Let ${\mathbf w}$ be an arbitrary unit vector, then we have
    \begin{align*}
        \langle \grad\u(\x,t) {\mathbf w},{\mathbf w}\rangle &= q(r,t)\|\mathbf w\|^2 + (p(r,t)-q(r,t)) \langle \frac{\x\x^T}{r^2}\mathbf{w}, \mathbf{w}\rangle \\
        &=q(r,t) + (p(r,t)-q(r,t)) \frac{\|\x^T\mathbf{w}\|^2}{r^2}.
    \end{align*}
    Let $\theta = \frac{\|\x^T\mathbf{w}\|^2}{r^2}$, and observe $\theta \in [0,1]$. Then
    \begin{align*}
         \langle \grad\u(\x,t) {\mathbf w},{\mathbf w}\rangle  = (1-\theta)q + \theta p 
         \leq \max\{p,q\}.
    \end{align*}                              
Moreover, by \eqref{eq:radialzero}, $u(0,t)=0$ and hence
\[ |q(r,t)|=\frac{1}{r}\left|\int_0^r\pa_s u(s,t)\,{\d}s\right|\leq
  \|p(\cdot,t)\|_{L^\infty},\]
and the statement follows from the last two inequalities.
\end{proof}
In a similar fashion, we establish a lemma that provides a uniform bound on $\nu$. Combining this result with the spectral representation of the Monge--Amp\'ere equation \eqref{eq:MA} allows us to bound $\|\rho\|_{L^\infty}$ by the radial eigenvalue $\mu$ of the Hessian $D^2\phi$.
\begin{lemma}Let $\mu$ and $\nu$ be defined as in \eqref{defn:munu}, and assume the boundary regularity ~\ref{eq:radialzero}, then we have
\begin{equation}\label{eq:bound:nu}
    \|\nu(\cdot, t)\|_{L^\infty} \leq \|\mu(\cdot, t)\|_{L^\infty}.
\end{equation}
\end{lemma}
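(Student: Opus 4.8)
The proof will mirror the structure of the preceding Lemma~\ref{lem:gradu}: the quantity $\nu(r,t) = \partial_r\phi(r,t)/r$ is a radial average of $\mu(s,t) = \partial_s^2\phi(s,t)$, so its sup-norm is controlled by that of $\mu$. Concretely, I would first use the boundary condition $\partial_r\phi(0,t)=0$ from \eqref{eq:radialzero} to write
\[
\nu(r,t) = \frac{\partial_r\phi(r,t)}{r} = \frac{1}{r}\int_0^r \partial_s^2\phi(s,t)\,\mathrm{d}s = \frac{1}{r}\int_0^r \mu(s,t)\,\mathrm{d}s.
\]
From this representation, taking absolute values and pulling the sup of $\mu$ out of the integral gives $|\nu(r,t)| \le \frac{1}{r}\int_0^r |\mu(s,t)|\,\mathrm{d}s \le \|\mu(\cdot,t)\|_{L^\infty}$ for every $r>0$, and taking the supremum over $r$ yields \eqref{eq:bound:nu}.

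The only technical point worth stating carefully is that the identity $\partial_r\phi(r,t) = \int_0^r \partial_s^2\phi(s,t)\,\mathrm{d}s$ requires $\partial_r\phi(\cdot,t)$ to be absolutely continuous on $[0,r]$ with the stated value at the origin; this is exactly guaranteed by the regularity assumed in Theorem~\ref{thm: LWP} (so that $\mu = \partial_r^2\phi \in L^\infty$ locally) together with the boundary regularity \eqref{eq:radialzero}. I do not anticipate any genuine obstacle here — the argument is the direct analogue of the bound $|q(r,t)| \le \|p(\cdot,t)\|_{L^\infty}$ established at the end of the proof of Lemma~\ref{lem:gradu}, with $(\mu,\nu)$ playing the role of $(p,q)$.

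Finally, for the stated consequence about $\|\rho\|_{L^\infty}$, I would combine \eqref{eq:bound:nu} with the spectral form of the Monge--Amp\`ere equation \eqref{eq:MA}: by Lemma~\ref{eigensystem}, $\rho = \det(I - D^2\phi) = (1-\mu)\prod_{i=2}^n(1-\nu) = (1-\mu)(1-\nu)^{n-1}$ (using the radial eigenvalue structure of $D^2\phi$), so $\rho$ is a polynomial in $(\mu,\nu)$, and the uniform bound on $\nu$ via $\mu$ reduces control of $\|\rho\|_{L^\infty}$ to control of $\|\mu\|_{L^\infty}$. The sign restriction \eqref{eq: s:pos_ini} together with the dynamics will later ensure $\mu,\nu\le 1$, keeping $\rho\ge 0$; but for the present lemma only the averaging inequality is needed.
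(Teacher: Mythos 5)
Your proposal is correct and is essentially identical to the paper's own proof: both write $\partial_r\phi(r,t)=\int_0^r\partial_s^2\phi(s,t)\,\mathrm{d}s$ using the boundary condition $\partial_r\phi(0,t)=0$ from \eqref{eq:radialzero} and then bound the average by the sup of $\mu$. The additional remarks on regularity and on the consequence for $\|\rho\|_{L^\infty}$ are consistent with how the paper uses the lemma but are not needed for the statement itself.
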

\begin{proof}
    We observe 
    \[
    |\nu(r,t)| = \frac{1}{r}|\pa_r \phi(r,t)| = \frac{1}{r}\bigg|\int_0^r \pa_r^2 \phi(s,t) \ \d s+\pa_r\phi(0,t)\bigg|.
    \]
    Since $\pa_r\phi(0,t) = 0$, equation \eqref{eq:bound:nu} holds.
\end{proof}

\subsection{Spectral dynamics for the radial damped Euler--Monge--Amp\'ere system.}
In this section, we derive the spectral representation of the Monge-Ampère equation \eqref{eq:MA} and the continuity equation \eqref{eq:density}. These relations are essential for characterizing the dynamics of the eigenvalues $(\mu,\nu)$, which, as stated in Theorem \ref{thm: LWP}, govern the global regularity of the system alongside the velocity gradient components $(p,q)$.

\begin{lemma}[Spectral form of the Monge--Amp\`ere equation]
Let $(\rho, \phi)$ satisfy the initial condition \eqref{eq:radial}, \eqref{eq:radialzero} and the Monge-Amp\`ere equation $\det(\Id - D^2\phi) = \rho$. Let $(\mu,\nu)$ be given as in \eqref{defn:munu}. Then the following relations hold:
\begin{itemize}
    \item The density admits the pointwise spectral representation:
    \begin{equation}\label{eq:MAspectral}
        \rho = (1-\mu)(1-\nu)^{n-1}.
    \end{equation}
    \item The global mass distribution is characterized by the identity:
    \begin{equation}\label{eq:rhonurelation}
        \partial_r \big(r^n (1-\nu)^n\big) = n r^{n-1} \rho.
    \end{equation}
\end{itemize}
\end{lemma}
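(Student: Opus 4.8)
The plan is to prove the two identities as consequences of Lemma~\ref{eigensystem} together with the radial boundary conditions. For the pointwise spectral representation \eqref{eq:MAspectral}, the starting point is that, by Lemma~\ref{eigensystem} applied to the radial scalar field $\phi$, the Hessian $D^2\phi(\x)$ has eigenvalue $\mu = \phi''(r)$ with multiplicity one (eigenvector $\x$) and eigenvalue $\nu = \phi'(r)/r$ with multiplicity $n-1$ (eigenvectors spanning $\x^\perp$). Hence the matrix $\Id - D^2\phi$ is diagonalized in the same orthonormal basis, with eigenvalues $1-\mu$ (simple) and $1-\nu$ (multiplicity $n-1$). Since the determinant is the product of the eigenvalues, $\det(\Id - D^2\phi) = (1-\mu)(1-\nu)^{n-1}$, and the Monge--Amp\`ere equation \eqref{eq:MA} gives \eqref{eq:MAspectral} immediately. (I would also note that assumption \eqref{eq: s:pos_ini} together with \eqref{eq:radialzero} keeps the relevant factors nonnegative, cf.\ Remark~\ref{rmk:density}.)

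For the global mass identity \eqref{eq:rhonurelation}, the idea is to recognize $r^n(1-\nu)^n$ as (a constant multiple of) the total mass enclosed in the ball of radius $r$, and then differentiate. Concretely, write $\nu = \phi'(r)/r$, so $1-\nu = 1 - \phi'(r)/r = (r - \phi'(r))/r$, and therefore $r^n(1-\nu)^n = (r-\phi'(r))^n$. Differentiating in $r$ gives
\[
\partial_r\big(r^n(1-\nu)^n\big) = n\big(r-\phi'(r)\big)^{n-1}\big(1-\phi''(r)\big) = n\, r^{n-1}(1-\nu)^{n-1}(1-\mu),
\]
and substituting the spectral representation \eqref{eq:MAspectral} yields $\partial_r\big(r^n(1-\nu)^n\big) = n\, r^{n-1}\rho$. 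This is the entire argument; alternatively one can phrase it as: the function $\x \mapsto \x - \nabla\phi(\x) = \x - \tfrac{\x}{r}\phi'(r)$ is the optimal-transport map associated to the Monge--Amp\`ere equation, its Jacobian determinant equals $\rho$ by \eqref{eq:MA}, and \eqref{eq:rhonurelation} is just the change-of-variables/divergence identity written in radial coordinates; but the direct computation above is cleaner and self-contained.

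There is no serious obstacle here — both statements are essentially bookkeeping once Lemma~\ref{eigensystem} is in hand. The only points that require a moment of care are: (i) confirming that the $n-1$ eigenvectors for $\nu$ are genuinely orthogonal to $\x$ so that $\Id - D^2\phi$ really is diagonalizable with the stated multiplicities (this is exactly the content of the last line of the proof of Lemma~\ref{eigensystem}); and (ii) the algebraic identity $r^n(1-\nu)^n = (r-\phi'(r))^n$ and its derivative, which is elementary. I would present \eqref{eq:MAspectral} first, then derive \eqref{eq:rhonurelation} from it via the displayed differentiation, keeping the proof to a few lines.
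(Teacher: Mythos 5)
Your proposal is correct and follows essentially the same route as the paper: the first identity is read off from the eigensystem in Lemma~\ref{eigensystem}, and the second is the same chain-rule computation — the paper writes it via the relation $\mu = \partial_r(r\nu)$ and expands $(1-\mu)(1-\nu)^{n-1}$ into $\tfrac{1}{nr^{n-1}}\partial_r\big(r^n(1-\nu)^n\big)$, while you equivalently differentiate $(r-\phi'(r))^n$ directly; the two are the same identity run in opposite directions. No gaps.
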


\begin{proof}
   The first claim is straightforward to verify:
   \[
    \rho = det(\Id-D^2\phi) = \prod_{i=1}^n(1-\lambda_i(D^2\phi))=(1-\mu)
  (1-\nu)^{n-1}.
   \]
Next, from the definition \eqref{defn:munu}, we observe the following the differential relation between $\mu$ and $\nu$:
\[\mu = \pa_r(r\nu) = r\pa_r\nu + \nu.\]
This implies $1-\mu = 1-\nu - r\pa_r \nu$. Substituting this equation into \eqref{eq:MAspectral}, we obtain 
\[(1-\mu)(1-\nu)^{n-1}=(1-\nu)^{n}-r\pa_r\nu\,(1-\nu)^{n-1}
  =\frac{1}{nr^{n-1}}\,\pa_r\big(r^n(1-\nu)^n\big).\]
This completes the proof.
\end{proof}

\begin{remark}\label{rmk:density}
    Recall that we require the potential $\phi$ to satisfy the initial condition $\phi_{0}''(r) \leq 1$. This is a direct consequence of the spectral Monge--Amp\`ere equation \eqref{eq:MAspectral}. In the one-dimensional case, \eqref{eq:MA} reduces to $-\partial_{xx}\phi = \rho - 1$; thus, the physical requirement of non-negative initial density, $\rho_0 \geq 0$, is equivalent to $\phi_0''(r) \leq 1$. In higher dimensions, the relationship between the non-negativity of $\rho_0$ and the bounds on $\phi_0''(r)$ becomes dimension-dependent. Specifically, if $\phi_0''(r) = \mu_0 > 1$, then \eqref{eq:radialzero} implies that $\phi_0'(r)/r = \nu_0 > 1$ as well; consequently, the sign of $\rho_0$ depends on the parity of the dimension $n$. Furthermore, as discussed in Remark~\ref{rmk:initial}, the condition $\phi_0''(r) > 1$ yields universal blow-up phenomenon. To ensure physical consistency, we enforce the condition $\phi_0''(r) \leq 1$.
\end{remark}

\begin{lemma}[Spectral form of the continuity equation]\label{lemma:spectral:continuity} Let $\rho(\x,t)$ and $\u(\x,t)$ be as given in \eqref{eq:radial} and \eqref{eq:radialzero}. Under radial symmetry, the evolution of the density \eqref{eq:density} becomes
\begin{equation}\label{eq:rho}
    \pa_t\rho+\pa_r(\rho u)=-\frac{(n-1)\rho u}{r}.
\end{equation}
\end{lemma}
\begin{proof}
    First observe by radial symmetry, we have
    \begin{align*}
        \div(\rho\u) &= \sum_{i=1}^n \pa_i\bigg(\rho u \frac{x_i}{r}  \bigg)\\
        &=\sum_{i=1}^n \pa_r(\rho u) \frac{x_i^2}{r}  + \sum_{i=1}^n \ \rho u(\frac{1}{r}-\frac{x_i^2}{r^3})\\
        &=\pa_r (\rho u)+ \frac{(n-1)\rho u}{r}.
    \end{align*}
    Hence the continuity equation\eqref{eq:density} becomes
    \[
    \pa_t \rho + \div(\rho\u) =  \pa_t \rho+ \pa_r (\rho u)+ \frac{(n-1)\rho u}{r} = 0.
    \]
    This completes the proof.
\end{proof}

Let us discuss several consequences of the above two lemmas, in particular, the dynamics of $\nu$.

\begin{lemma}
Let $\rho=\rho(r,t)$ and $u=u(r,t)$ be smooth radial solutions of the continuity equation \eqref{eq:rho}
and define
\[
e(r,t) := \int_{0}^{r} s^{\,n-1}\,\rho(s,t)\,ds.
\]
Let $' = \pa_t + u\pa_r$ be material derivative along the characteristic path $\dot{r} = u(r,t)$. Then
\[
e' = \pa_t e + u\pa_r e = 0, \quad \textnormal{and} \quad \big(r(1-\nu)\big)' = 0,
\]
where $\mu$ and $\nu$ are defined as in \eqref{defn:munu}. Consequently
\[
\nu' = q(1-\nu),
\]
where $q$ is defined as in \eqref{defn:pq}.
\end{lemma}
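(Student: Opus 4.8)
The plan is to prove the three assertions in the stated order, each following from the previous one together with the spectral identities \eqref{eq:rho} and \eqref{eq:rhonurelation}. For the \emph{first identity} $e'=0$, I would start by rewriting the radial continuity equation \eqref{eq:rho} in conservative form: multiplying it by $r^{n-1}$ and noting that $r^{n-1}\pa_r(\rho u)+(n-1)r^{n-2}\rho u=\pa_r\!\big(r^{n-1}\rho u\big)$, one gets $r^{n-1}\pa_t\rho=-\pa_r\!\big(r^{n-1}\rho u\big)$. Differentiating the definition of $e$ under the integral sign and inserting this identity yields
\[
\pa_t e(r,t)=\int_0^r s^{n-1}\pa_t\rho(s,t)\,\d s=-\int_0^r \pa_s\!\big(s^{n-1}\rho u\big)\,\d s=-\,r^{n-1}\rho(r,t)\,u(r,t),
\]
the boundary term at $s=0$ vanishing because $u(0,t)=0$ and $\rho$ stays bounded near the origin. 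Since $\pa_r e=r^{n-1}\rho$, we conclude $e'=\pa_t e+u\,\pa_r e=-r^{n-1}\rho u+u\,r^{n-1}\rho=0$.

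For the \emph{second identity}, I would integrate the spectral Monge--Amp\`ere relation \eqref{eq:rhonurelation}, namely $\pa_r\!\big(r^n(1-\nu)^n\big)=n r^{n-1}\rho=n\,\pa_r e$, from $0$ to $r$. The boundary contribution at the origin vanishes since $\pa_r\phi(0,t)=0$ and $\phi\in C^2$ force $\nu(r,t)=\pa_r\phi(r,t)/r$ to have a finite limit as $r\to 0^+$, so $r^n(1-\nu)^n\to0$. This gives the pointwise identity $\big(r(1-\nu)\big)^n=n\,e(r,t)$; it is worth noting that $r(1-\nu)=r-\pa_r\phi(r,t)$ is a smooth function. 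Taking the material derivative and using $e'=0$ from the first step shows $\big(r(1-\nu)\big)^n$ is constant along each characteristic. Since a differentiable function whose $n$-th power is constant must itself be constant along that characteristic --- if the constant is nonzero, $r(1-\nu)$ never vanishes there; if it is zero, $r(1-\nu)\equiv0$ --- we obtain $\big(r(1-\nu)\big)'=0$ in all cases.

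Finally, expanding $\big(r(1-\nu)\big)'=0$ by the product rule and using $r'=\dot r=u(r,t)$ along the characteristic gives $u(1-\nu)-r\,\nu'=0$; dividing by $r>0$ and recalling $q=u/r$ yields $\nu'=q(1-\nu)$, which extends to $r=0$ by continuity. The computations are short, so the only genuinely delicate points --- and hence where I would be most careful --- are the justification of the vanishing boundary terms at $r=0$ in the first two steps, which rely on the regularity conditions \eqref{eq:radialzero} and the $C^2$ smoothness of $\phi$, and the extraction of the $n$-th root in the second step in the presence of vacuum ($e=0$), which I would handle via the sign/vanishing dichotomy above rather than by naive division.
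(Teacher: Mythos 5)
Your proposal is correct and follows essentially the same route as the paper: conservative form of the continuity equation to get $e'=0$, integration of \eqref{eq:rhonurelation} to obtain $\big(r(1-\nu)\big)^n=ne$, and then the material derivative to conclude. The only difference is that you explicitly justify the vanishing boundary terms at $r=0$ and the extraction of the $n$-th root in the vacuum case, details the paper leaves implicit.
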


\begin{proof}
Multiplying the continuity equation \eqref{eq:rho} by $r^{n-1}$ gives
\[
r^{n-1}\pa_t\rho + r^{n-1}\pa_r(\rho u) = -(n-1)\,r^{n-2}\rho u.
\]
this is equivalent to
\begin{equation}\label{eq:aux}
    \partial_t\!\big(r^{n-1}\rho\big) + \partial_r\!\big(r^{n-1}\rho u\big) = 0.
\end{equation}
By definition of $e(r,t)$ we obtain
\[
\pa_t e(r,t) = \int_0^r s^{n-1}\pa_t \rho(s,t)\,ds, \ \ \ 
\pa_r e(r,t) = r^{n-1}\rho(r,t).
\]
Integrating \eqref{eq:aux} over $[0,r]$ to obtain
\[
\pa_t e(r,t) + r^{n-1}\rho(r,t)\,u(r,t) = 0.
\]
Let $r=r(t; r_0)$ be the characteristic path initiated at $r_0$, satisfying $r' = u(r,t), \ r(0; r_0)=r_0.$
This gives $$\pa_t e + u \pa_r e =  e'(r,t)=0.$$ 
Next, we derive the dynamics of $\nu$. Observe from \eqref{eq:rhonurelation}, we get $r^n(1-\nu)^n=ne$. The conclusion that $e' = 0$ then implies that $\big(r(1-\nu)\big)'=0.$ Consequently, we have the dynamics of $\nu$
\begin{equation}\label{eq:nu}
  \nu' = \frac{r'}{r}(1-\nu) = \frac{u}{r}(1-\nu) = q(1-\nu).
\end{equation}
\end{proof}
Combining with the previously derived evolution equations for $(p,q)$ \eqref{eq:pq}, we observe that the dynamics of $(q,\nu)$ forms a closed ODE system along the characteristic paths
\begin{equation}\label{eq:qnu}
  \begin{cases}
    q' = -q^2-\kappa \nu-\beta q,\\
    \nu' = q(1-\nu).
  \end{cases}
\end{equation}
Crucially, the dynamics of $(p,\mu)$ satisfies a system of the identical form. To see this, recall the spectral representation of the mass continuity equation \eqref{eq:rho}
    \begin{equation}\label{eq:rho_longer}
        \pa_t\rho + u\pa_r \rho = -\rho \pa_ru- \frac{(n-1)\rho u}{r}.
    \end{equation}
Along the characteristic path the dynamics of $\rho$ becomes
\begin{equation}\label{eq:rho:characteristic}
    \rho' = -\rho(p+(n-1)q).
\end{equation}
Furthermore, the spectral form of the Monge-Amp\`ere equation \eqref{eq:MAspectral} implies
\[\mu = 1-\frac{\rho}{(1-\nu)^{n-1}}.\]
This together with \eqref{eq:nu}, yields
\begin{align*}
  \mu' =&\, -\frac{\rho'(1-\nu)^{n-1}+(n-1)\rho(1-\nu)^{n-2}\nu'}{(1-\nu)^{2n-2}}
  = -\frac{-\rho\big(p+(n-1)q\big)+(n-1)\rho q}{(1-\nu)^{n-1}}\\
=&\, \frac{\rho p}{(1-\nu)^{n-1}}=p(1-\mu).
\end{align*}
Therefore, the dynamics of $(p,\mu)$ also forms a closed ODE system along
the characteristic paths
\begin{equation}\label{eq:pmu}
  \begin{cases}
    p' = -p^2-\kappa \mu-\beta p,\\
    \mu' = p(1-\mu).
  \end{cases}
\end{equation}
As established by Lemma \ref{lem:gradu} and \ref{eq:bound:nu}, since the tangential pair $(q,\nu)$ and the radial pair $(p,\mu)$ are governed by the identical ODE systems, the global regularity of the damped EMA system is entirely determined by the dynamics of $(p,\mu)$. Consequently, the threshold conditions and asymptotic behavior results derived for the radial components apply directly to the tangential components.

\section{Proof of the main results.}
In this section, we aim to construct the sharp critical threshold regions in the $(p,\mu)$, which in turns characterizes the necessary and sufficient conditions ensuring the uniform boundedness of $\grad \u$ and $\rho$, both in the vacuous and non-vacuous scenarios. In the second scenario, we apply a transform that reformulates \eqref{eq:pmu} into a system of linear ODEs, which enables us to perform explicit phase analysis. This approach also illuminates the explicit decay rate to equilibrium, hence we conclude the large-time asymptotic behavior. In the next section, we demonstrate a Lyapunov function based approach, utilizing the same transform. 
\begin{remark}
    As noted in the derivation of the Burgers--type equation \eqref{eq:velocity:non_vacuous}, the reduction is based on the assumption of positive initial mass. Hence we emphasize that the critical threshold investigated in the vacuous setting is should be regarded as formal. 
\end{remark}

\subsection{A vacuous critical threshold}
Recall that condition \eqref{eq: s:pos_ini} requires $\phi_0''(r)\leq 1$. We first consider the case $\phi_0''(r) = 1$, which corresponds to an initial vacuum $\rho_0 = 0$, as indicated \eqref{eq:MAspectral}. In the absence of damping, a density lower bound $\rho_0\geq 2^{-n}$, where $n$ is the dimension, is a necessary condition for global regularity. As a consequence, the undamped EMA system necessarily develops singular shocks in finite-time from a vacuous state, see \cite{tadmor2022critical}. We include a brief discussion of the density lower bound in remark \ref{rmk:lowerbound}. In what follows, we show that the introduction of damping can prevent the formation of these shocks. 

From the evolution of $\mu$ in \eqref{eq:pmu}, we have:
\[
\mu(t) = 1-(1-\mu_0)e^{-\int_0^t p(s)\ \d s}.
\]
Setting $\mu_0 =\phi_0''(r)= 1$ implies that $\mu(t) = 1$ for all $t>0$. Then in \eqref{eq:pmu}, the dynamics of $p$ simplifies to the autonomous Riccati ODE:
\begin{equation}\label{eq:p_vac}
  p' = -p^2 - \beta p - \kappa.  
\end{equation}
Depending on the damping coefficient $\beta$, we have three scenarios:

\begin{itemize}
    \item[(I)] \textit{Strong damping} $(\beta>2\sqrt{\kappa})$. In this regime, $p(t)$ has two fixed points $p_-<p_+<0$, corresponding to the two real roots of the polynomial  $f(p) = p^2 + \beta p+\kappa = 0$:
    \[
    p_- = \frac{-\beta -\sqrt{\beta^2 - 4\kappa}}{2},\quad \textnormal{and} \quad p_+ = \frac{-\beta +\sqrt{\beta^2 - 4\kappa}}{2}.
    \]
Observe $f(p)$ is a downward-opening parabola; thus, for any initial data $p_0 \in [p_-, \infty)$, the solution $p(t)$ is bounded and asymptotically approaches the stable equilibrium $p_+$. Conversely, for any $p_0 < p_-$, the quadratic growth of the $-p^2$ term dominates, and $p(t)$ diverges to $-\infty$ in finite time $t = T_c$. To see that $T_c$ is finite, we integrate \eqref{eq:p_vac} and obtain
\begin{align*}
    T_c = \int_0^{T_c} \d t &= \int_{p_0}^{-\infty} \frac{\d p}{-p^2-\beta p - \kappa} \\ &= \int_{p_0}^{-\infty} -\frac{\d p}{(p-p_-)(p-p_+)} = \frac{1}{p_+-p_-}\ln\left|\frac{p_0-p_+}{p_0-p_-}\right|.
\end{align*}
Since $p_0< p_-<p_+$, $T_c$ is positive and finite. 
    \item[(II)] \textit{Critical damping} $(\beta = 2\sqrt{\kappa})$. In this case, the polynomial $f(p) = -p^2-\beta p - \kappa$ has one real root $p_* = -\beta/2 = -\sqrt{\kappa}$. Following a similar analysis, we conclude that if $ p_0\in [p_*,\infty)$, then $p(t)$ remains bounded and approaches the equilibrium $p_*$. For $p_0< p_*$, $p(t)$ approaches $-\infty$ in finite time. To see this, we observe
    \begin{align*}
        T_c =  \int_{p_0}^{-\infty} \frac{\d p}{-p^2-\beta p - \kappa} =  \int_{p_0}^{-\infty} \frac{\d p}{-p^2-2\sqrt{\kappa} p - \kappa} =  \int_{p_0}^{-\infty} \frac{\d p}{-(p+\sqrt{\kappa})^2} = -\frac{1}{p_0+\sqrt{\kappa}}.
    \end{align*}
Since $p_0< p_* = -\sqrt{\kappa}$, we see that $T_c$ is positive and finite.

    \item [(III)] \textit{Weak damping} $(\beta< 2\sqrt{\kappa})$. In this case, the polynomial $f(p) = -p^2-\beta p - \kappa$ doesn't have a real root. All initial data $p_0$ lead to blow-up at a finite time $T_c$ determined by the following integral
    \begin{align*}
        T_c &=  \int_{p_0}^{-\infty} \frac{\d p}{-p^2-\beta p - \kappa} = \int_{p_0}^{-\infty} -\frac{\d p}{(p+\beta/2)^2+(\sqrt{\kappa - \beta^2/4})^2}\\ &= \frac{1}{\sqrt{\kappa-\beta^2/4}}\bigg[\arctan\bigg(\frac{p_0+\beta/2}{\sqrt{\kappa-\beta^2/4}}\bigg)+\frac{\pi}{2}\bigg].
    \end{align*}
    Indeed, the above integral is finite for any initial data $p_0$, implying that in the weakly damped regime, every smooth initial data experiences finite time singularity formation.
\end{itemize}
In conclusion, we have the following characterization of the sharp and explicit critical threshold in vacuum.
\begin{theorem}\label{thm: GWP_vac}
    Consider the damped EMA system \eqref{eqs:EMA} with radial symmetry \eqref{eq:radial}, and initial data \eqref{eq:initial_data} satisfying \eqref{eq:radialzero}. In addition, suppose the potential $\phi$ satisfies $\phi_0''(r) = 1$ for all $r>0$, implying a vacuous state of density. If the initial condition satisfies
     \begin{itemize}
        \item Strong damping $(\beta>2\sqrt{\kappa})$
        \[
        u'_0(r) \geq \frac{-\beta-\sqrt{\beta^2-4\kappa}}{2}, \quad \forall r>0.
        \]
        \item Critical damping $(\beta= 2\sqrt{\kappa})$
        \[
        u_0'(r)\geq -\sqrt{\kappa}, \quad \forall r>0.
        \]
    \end{itemize}
     then $\grad \u$ is uniformly bounded for all time. Conversely, $\grad\u$ develops a (non-physical) singular shock at time $T_c$ and location $r_c=r(T_c; r_0)$, namely
   \begin{equation}
	\lim_{t\to T_c^-}\pa_r u(r_c,t)=-\infty,\quad \lim_{t\to T_c^-}\rho(r_c,t)=0.     
   \end{equation}
     In addition, any smooth initial data generate singular shock in finite time in the weakly damped regime.
\end{theorem}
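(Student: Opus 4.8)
The plan is to rigorously repackage the phase--line analysis of the scalar Riccati equation \eqref{eq:p_vac} that precedes the theorem. The first step is to record that, under the hypothesis $\phi_0''(r)=1$, the integrated formula $\mu(t)=1-(1-\mu_0)e^{-\int_0^t p\,\d s}$ forces $\mu(r,t)\equiv 1$ along every characteristic; by the spectral Monge--Amp\`ere identity \eqref{eq:MAspectral} this gives $\rho\equiv 0$, and the $p$--component of \eqref{eq:pmu} decouples into the autonomous Riccati ODE $p'=-p^2-\beta p-\kappa$ with data $p(r,0)=u_0'(r)$, one copy for each $r>0$. I would also note at the outset that the whole argument is formal in this vacuum regime, since the Burgers--type reduction \eqref{eq:velocity:non_vacuous} presupposes positive mass --- a caveat already flagged in the remark preceding the statement, which I would simply restate.

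For the global--regularity half (strong and critical damping), I would argue by forward invariance of a half--line for the Riccati flow. In the strong--damping case write $-p^2-\beta p-\kappa=-(p-p_-)(p-p_+)$; this is $\ge 0$ exactly on $[p_-,p_+]$ and $<0$ on $(p_+,\infty)$, so $[p_-,\infty)$ is forward invariant, and the monotonicity $p'<0$ for $p>p_+$ yields the pointwise sandwich $p_-\le p(r,t)\le\max\{u_0'(r),p_+\}$. Since the local well--posedness hypothesis embeds $u_0'\in L^\infty(\R^n)$, this upgrades to the time--uniform bound $\|p(\cdot,t)\|_{L^\infty}\le\max\{|p_-|,\|u_0'\|_{L^\infty}\}$; Lemma \ref{lem:gradu} then transfers it to $\|\grad\u(\cdot,t)\|_{L^\infty}\le\|p(\cdot,t)\|_{L^\infty}$, which is the claimed uniform bound. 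The critical--damping case is verbatim the same with the double root $p_*=-\sqrt{\kappa}$ replacing the interval $[p_-,p_+]$ and $[p_*,\infty)$ the invariant half--line.

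For the blow--up half, suppose the threshold is violated: there is $r_*$ with $u_0'(r_*)<p_-$ (strong), $u_0'(r_*)<p_*$ (critical), or $u_0'(r_*)$ arbitrary (weak, where $-p^2-\beta p-\kappa=-(p+\beta/2)^2-(\kappa-\beta^2/4)<0$ for every $p$). On the relevant sub--threshold region the right--hand side is strictly negative, so $p(r_*,\cdot)$ decreases, remains sub--threshold, and the $-p^2$ term dominates; finiteness of the blow--up time $T_c$ is exactly convergence of the separated--variables integral $\int_{u_0'(r_*)}^{-\infty}\d p/(-p^2-\beta p-\kappa)$, evaluated explicitly in cases (I)--(III) and finite in each. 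Hence $\pa_r u(r_c,t)\to-\infty$ as $t\to T_c^-$ along $r_c=r(T_c;r_*)$, while $\rho\equiv 0$ supplies the (degenerate) density limit; in the weak regime $r_*$ is arbitrary, so singularity formation is unavoidable for every smooth datum, recovering \eqref{eq:singularshock} in its vacuous form.

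I do not expect a genuine obstacle. The two points needing care are (i) extracting the \emph{time-- and $r$--uniform} bound on $p$ from the phase line, which is precisely what pins down the role of the Sobolev embedding $u_0'\in L^\infty$, and (ii) making the vacuum caveat explicit, so that the ``singular shock'' is correctly read as the blow--up of $\pa_r u$ alone, the density being identically zero throughout.
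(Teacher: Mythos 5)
Your proposal is correct and follows essentially the same route as the paper: reduce via $\mu\equiv 1$ to the autonomous Riccati equation $p'=-p^2-\beta p-\kappa$ along characteristics, identify the invariant half-line $[p_-,\infty)$ (resp.\ $[p_*,\infty)$) for global boundedness, and establish finite-time blow-up below threshold (and for all data in the weak regime) by the convergence of the separated-variables integral, transferring the bound on $p$ to $\grad\u$ via Lemma \ref{lem:gradu}. Your explicit remarks on the uniform-in-$r$ bound and on the formal nature of the vacuum reduction are sensible refinements of points the paper treats implicitly.
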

The proof of theorem \ref{thm: GWP} contains the above as a special case.

\subsection{A non-vacuous critical threshold.}
Here we deal with the second case where $\phi_0''(r) = \mu_0 <1$. This allows us to take advantage of the following two new variables.
\begin{equation}\label{transformation:ws}
    w:= \frac{p}{1-\mu}, \quad s:= \frac{1}{1-\mu}.
\end{equation}
The new pair of variables leads to a reformed coupled system, whose dynamics can be derived from \eqref{eq:pmu}:
\begin{align*}
    w^\prime &= \frac{p^\prime (1-\mu)+p \mu^\prime}{(1-\mu)^2} = \frac{(-p^2 -\kappa \mu - \beta p)(1-\mu)+p^2(1-\mu)}{(1-\mu)^2} = \kappa(1-s)-\beta w,\\
     s^\prime &= \frac{\mu^\prime}{(1-\mu)^2} = \frac{p}{1-\mu}= w.
\end{align*}
Hence, the dynamics of $(w,s)$ is given by
\begin{equation}\label{dynamics: w_s}
    \begin{cases}
        w^\prime = -\beta w +\kappa(1-s),\\
        s^\prime = w.
    \end{cases}
\end{equation}
Observe that the forcing terms in the right hand of \eqref{dynamics: w_s} are Lipschitz in $(w,s)$, which ensures that the solution $(w(t),s(t))$ remains bounded for all finite time, see \eqref{eq:derivative:lya} and subsequent phase plane analysis. Furthermore, the solution $(w(t),s(t))$ decays in time. Since $p$ can be recovered via $p = \frac{w}{s}$, the only possible finite-time blowup scenario in the $(p,\mu)$ plane is when $s(T_c) = 0$. To characterize the conditions under which this occurs, we partition the $(w,s)$ phase plane into the following threshold regions:
\begin{itemize}\label{region}
    \item \textit{Subcritical region} $\Sigma_\flat$: if $(w_0,s_0) \in \Sigma_\flat$, then $s(t)>0$ for all time.
    \item \textit{Supercritical region} $\Sigma_\sharp$: if $(w_0,s_0)\in \Sigma^\sharp$, then there exists a finite time $T_*>0$ such that $s(T_*) = 0$.
\end{itemize}
The threshold regions in the $(p, \mu)$ plane can be recovered from those in the $(w,s)$ plane. To this end, consider the Lyapunov function
\begin{equation}\label{tt:lyapunov}
\mathcal{L}(w,s) = w^2 + \kappa(1-s)^2.    
\end{equation}
Differentiating $\L(w,s)$ using the dynamics \eqref{dynamics: w_s} gives
\begin{align*}
    \frac{d}{dt}\mathcal{L}(w(t),s(t)) &= 2ww^\prime -2\kappa(1-s)s^\prime \numberthis  \label{eq:derivative:lya} =-2\beta w^2,
\end{align*}
implying that $(w(t),s(t))$ decays toward the equilibrium $(0,1)$ in time. This Lyapunov function was introduced in \cite{tadmor2022critical} to investigate the
critical threshold in the undamped case ($\beta = 0$). In that setting,
$\frac{d}{dt}\mathcal{L}(w(t),s(t)) = 0$, and the trajectories of \eqref{dynamics: w_s} trace ellipses in the phase plane. The critical threshold is identified by the level set passing through the origin $(0,0)$, which corresponds to $\L(w(t),s(t)) =  \mathcal{L}(w_0,s_0) = \kappa$. Consequently, the condition
$s(t) > 0$ holds for all time provided the initial data satisfy $\mathcal{L}(w_0, s_0) < \kappa$, or equivalently, $|p_0| < \sqrt{\kappa(1-2\mu_0)}$. 

Geometrically, the subcritical region in the undamped case is characterized as the interior region bounded by the ellipse centered at $(0,1)$ that intersects the origin. We now present two equivalent constructions of the critical threshold: one utilizing explicit solutions to the system \eqref{dynamics: w_s}, and the other via a Lyapunov function based approach.

\subsection{Explicit phase plane analysis.}
In our case, the introduction of the damping term renders integral curves to the system \eqref{dynamics: w_s} non-closed. We can explicitly construct the thresholds in $(w(t),s(t))$ plane by solving \eqref{dynamics: w_s}. To this end, consider the following system of ODEs
\[
    \begin{bmatrix}
        w \\[5pt]
        s-1
    \end{bmatrix}^\prime = \begin{bmatrix}
        -\beta & -\kappa \\
        1 & 0
    \end{bmatrix} \begin{bmatrix}
        w \\[5pt]
        s-1
    \end{bmatrix},
\]
subject to initial conditions
\[
s(0) = s_0 = \frac{1}{1-\mu_0}, \quad w(0) = w_0 = \frac{p_0}{1-\mu_0}.
\]
Let us denote
    \[
    \lambda_{1} := \frac{\beta - \sqrt{\beta^2 - 4\kappa}}{2},\quad  \lambda_{2} := \frac{\beta + \sqrt{\beta^2 - 4\kappa}}{2}.
\]
We observe $-\lambda_1$ and $-\lambda_2$ are the two eigenvalues of the coefficient matrix. If $\beta<2\sqrt{\kappa}$, then $\lambda_1$ and $\lambda_2$ are complex-valued, with
\[
\lambda_1 = \alpha-i\omega, \ \ \ \lambda_2 = \alpha+i\omega, \ \ \ \textnormal{where} \ \ \  \alpha = \frac{\beta}{2},  \ \ \ \textnormal{and} \ \ \  \omega:= \frac{1}{2}\sqrt{4\kappa-\beta^2}.
\]
For analysis convenience, we also use the following second-order ODE, obtained by differentiating $s(t)$,
\begin{equation}\label{eq:2nd order ODE:S}
    \begin{cases}
        s''(t)+\beta s'(t)+\kappa s(t) = \kappa,\\
        s(0) = s_0,  \ \  s'(0) = w_0.
    \end{cases}
\end{equation}

Throughout, we will use $s'(t)$ and $w(t)$ interchangeably, with the understanding that they denote the same function. There are three scenarios:

\begin{itemize}
    \item [(I)] \textit{Strong damping} $(\beta>2\sqrt{\kappa})$. The solution takes the following form
\begin{subequations}
    \begin{align}
     s(t) &= 1 + A_1 e^{-\lambda_1 t} + A_2 e^{-\lambda_2 t},\numberthis \label{eq:s_strong} \\
    w(t) &= -A_1 \lambda_1 e^{-\lambda_1 t} - A_2 \lambda_2 e^{-\lambda_2 t}. \numberthis \label{eq:w_strong}
    \end{align}
\end{subequations}
The coefficients $A_1,A_2$ are determined by the initial data:
\begin{align*}
    A_1 &= \frac{w_0 + \lambda_2 (s_0 - 1)}{\lambda_2 - \lambda_1}, \quad  A_2 = \frac{-w_0 - \lambda_1 (s_0 - 1)}{\lambda_2 - \lambda_1}.
\end{align*}
Recall by \eqref{dynamics: w_s}, $w(t) = s'(t)$.  We first observe $s(t)$ can have at most one extremum. Let $t^*$ be the time at which $s$ attains the extremum. Setting $s'(t^*)=0$, we find
\begin{equation}\label{eq:extremum-cond}
    e^{(\lambda_2 - \lambda_1) t^*} 
    = -\frac{A_2 \lambda_2}{A_1 \lambda_1}, \ \ \ \textnormal{and} \ \ \ t^* = \frac{1}{\lambda_2-\lambda_1}\ln\bigg(-\frac{A_2\lambda_2}{A_1\lambda_1}\bigg).
\end{equation}
Therefore, an extremum exists if and only if $A_1$ and $A_2$ have opposite signs. On the other hand, if there doesn't exist an extremum, then $s(t)$ decays monotonically in time to $1$, and the positivity of $s(t)$ for all $t>0$ holds if $s_0 >0$, which is guaranteed by \eqref{eq: s:pos_ini}. Suppose the extremum exists, then a second derivative calculation yields
\[
    s''(t) = A_1 \lambda_1^2 e^{-\lambda_1 t} + A_2 \lambda_2^2 e^{-\lambda_2 t}.
\]
At $t = t^*$,  \eqref{eq:extremum-cond} gives
\[
    s''(t^*) = -(\lambda_2 - \lambda_1) A_1 \lambda_1 e^{-\lambda_1 t^*}.
\]
We distinguish two cases based on the initial data:
\begin{enumerate}
    \item If $A_1>0$ and $A_2<0$, then $s(t^*)$ is a local maximum. In this case $s(t^*)$ is bounded below by $\min\{s_0,1\}$.
    \item If $A_1<0$ and $A_2>0$, then $s(t^*)$ is a local minimum. 
\end{enumerate}
Therefore, a violation of $s(t)>0$ can only occur if $s$ possesses a local minimum at some $t^*>0$. Observe \eqref{eq:2nd order ODE:S} asserts $s''(t^*)+\kappa s(t^*) = \kappa$, implying
\[
    s(t^*) = \frac{\kappa - s''(t^*)}{\kappa}.
\]
Consequently, the positivity of $s(t^*)$ is equivalent to the condition $s''(t^*) < \kappa$. On substituting the expression of $t^*$, we have
\[
-A_1\lambda_1 (\lambda_2-\lambda_1)\bigg(-\frac{A_2\lambda_2}{A_1\lambda_1}\bigg)^{-\frac{\lambda_1}{\lambda_2-\lambda_1}} < \kappa.
\]
Further simplification using $A_1<0, A_2>0$ gives
\[
\bigg[\frac{(\lambda_2-\lambda_1)(-A_1\lambda_1)}{\kappa}\bigg]^{\lambda_2} < \bigg[\frac{(\lambda_2-\lambda_1)A_2\lambda_2}{\kappa}\bigg]^{\lambda_1}.
\]
Substituting the expression of $A_1$ and $A_2$ into the inequality yields
\[
\bigg[-\frac{\lambda_1 w_0+\kappa(1-s_0)}{\kappa}\bigg]^{\lambda_2}
\;<\;
\bigg[- \frac{\lambda_2 w_0+\kappa(1-s_0)}{\kappa}\bigg]^{\lambda_1}.
\]
In addition, the conditions $A_1<0$ and $A_2>0$ require
\[
w_0+\lambda_2(s_0-1)<0, \qquad w_0+\lambda_1(s_0-1)<0.
\]
In terms of $(p_0, \mu_0)$, the critical threshold condition becomes
\begin{equation}\label{eq: cs_strong}
    \bigg[-\frac{\lambda_1 p_0 + \kappa \mu_0}{\kappa (1-\mu_0)}\bigg]^{\lambda_2}
\;<\;
\bigg[-\frac{\lambda_2 p_0 + \kappa \mu_0}{\kappa (1-\mu_0)}\bigg]^{\lambda_1}.
\end{equation}
given 
\begin{equation}\label{eq: cs_strong_assumption}
    \frac{p_0+\lambda_2\mu_0}{1-\mu_0}<0, \ \ \ \textnormal{and} \ \ \ \frac{p_0+\lambda_1\mu_0}{1-\mu_0}<0.
\end{equation}
To summarize, a violation of $s(t)>0$ in the strong damping regime, corresponding to a blow-up scenario in the main system \eqref{eqs:EMA}, occurs if and only if the initial condition $(p_0,\mu_0)$ satisfies \eqref{eq: cs_strong_assumption} but fails to meet the inequality \eqref{eq: cs_strong}. In particular, \eqref{eq: cs_strong_assumption} simplifies to $p_0+\lambda_2\mu_0<0$ and $p_0+\lambda_1\mu_0 <0$, as $1-\mu_0$ is assumed to be positive (see remark \eqref{rmk:initial}).
\medskip

\item[(II)] \textit{Critical damping} $(\beta = 2\sqrt{\kappa})$.
The solution to the system \eqref{dynamics: w_s} takes the form
\begin{subequations}
    \begin{align*}
        s(t) &= 1 + \bigg[(s_0-1) + \Big(w_0+\alpha(s_0-1)\Big)t\bigg]e^{-\alpha t}, \numberthis \label{eq:s_borderline} \\
        w(t) &= \bigg[w_0-\alpha \Big(w_0+\alpha (s_0-1)\Big)t\bigg]e^{-\alpha t}. \numberthis \label{eq:w_borderline}
    \end{align*}
\end{subequations}
We observe that $s(t)$ admits at most one local extremum. If no such extremum exists, $s(t)$ decays monotonically in time to $1$, implying that $s(t)$ is bounded below by $\min\{s_0, 1\}$. Suppose instead that a local extremum exists at $t^*>0$. Setting \eqref{eq:w_borderline} to $0$ yields
\[
    t^* = \frac{w_0}{\alpha\big(w_0+\alpha (s_0-1)\big)}
\]
For $t^*>0$, the signs of $w_0$ and $w_0+\alpha(s_0-1)$ must coincide. Evaluating the second derivative at $t= t^*$ gives
\[
s''(t^*) = -\alpha \big(w_0+\alpha (s_0-1)\big)e^{-\alpha t^*}.
\]
We distinguish two cases based on the initial data:
\begin{enumerate}
    \item If $w_0+\alpha(s_0-1)>0$ and $w_0>0$, then $s(t^*)$ is a local maximum. In this case $s(t^*)$ is bounded below by $\min\{s_0,1\}$.
    \item If $w_0+\alpha(s_0-1)<0$ and $w_0<0$, then $s(t^*)$ is a local minimum. 
\end{enumerate}
To ensure $s(t^*)>0$ in the latter case, we require $s(t^*)>0$. Substituting $t^*$ into \eqref{eq:s_borderline}, we obtain
\begin{align*}
    s(t^*) &= 1+ \frac{w_0 + \alpha (s_0-1)}{\alpha}e^{-\alpha t^*}.
\end{align*}
The condition $s(t^*)>0$ therefore is equivalent to
\[
e^{-\alpha t^*}< -\frac{\alpha}{w_0 + \alpha(s_0 - 1)}.
\]
Substituting the expression for $t^*$ leads to the critical threshold inequality: 
\[
\frac{w_0}{w_0+\alpha (s_0-1)} > \ln\bigg[-\frac{w_0+ \alpha(s_0-1)}{\alpha}\bigg],
\]
provided that 
\[
w_0 <0  \ \ \ \textnormal{and} \ \ \ w_0+\alpha(s_0-1)<0.
\]
In terms of $p_0$ and $\mu_0$, this condition is expressed as:
\begin{equation}\label{eq: cs_critical}
    \frac{p_0}{p_0+\alpha \mu_0} > \ln\bigg[-\frac{p_0+\alpha \mu_0}{\alpha(1-\mu_0)}\bigg],
\end{equation}
provided $p_0<0$ and $p_0+\alpha\mu_0<0$, where we used the assumption $1-\mu_0>0$ again. To summarize, a violation of $s(t)>0$ in the critical damping regime, corresponding to a blow-up scenario in the main system \eqref{eqs:EMA}, occurs if and only if the initial condition $(p_0,\mu_0)$ satisfies $p_0<0$ and $p_0+\alpha\mu_0<0$ but fails to meet the inequality \eqref{eq: cs_critical}.

\medskip

\item [(III)] \textit{Weak damping} $(\beta< 2\sqrt{\kappa})$. The solution takes the following form
\begin{subequations}
    \begin{align*}
      s(t) &= 1 + e^{-\alpha t}\left[ (s_0 - 1)\cos(\omega t)
        + \frac{w_0 + \alpha (s_0 - 1)}{\omega}\sin(\omega t)\right],\numberthis \label{eq: s_weak}\\
        w(t) & = e^{-\alpha t}\bigg[w_0\cos(\omega t)-\frac{\alpha w_0 +\kappa(s_0-1)}{\omega}\sin(\omega t)\bigg]. \numberthis \label{eq: w_weak}
\end{align*}
\end{subequations}
Since $s(t)$ decays to $1$, it suffices to identify the conditions where the first local minimum of $s(t)$ is positive. To this end, we set $s'(t)=w(t) =0$ and find
\[
    \tan(\omega t) = \frac{\omega w_0}{\alpha w_0 + \kappa (s_0 - 1)}.
\]
At $t = t^*$, the second derivative calculation gives
\[
s''(t^*)  = \frac{-\kappa \cos(\omega t^*)e^{-\alpha t^*}}{\alpha w_0 + \kappa(s_0-1)}\bigg[\big(w_0+\alpha(s_0-1)\big)^2 + (\kappa-\alpha^2)(s_0-1)^2\bigg].
\]
Recall $\kappa-\alpha^2 = \frac{1}{4}(4\kappa - \beta^2)>0$ holds in the weak damping regime. Consequently, to have that $s(t^*)$ is a local minimum requires
\[
\textnormal{sgn}\ (\cos(\omega t^*)) = - \textnormal{sgn} \ (\alpha w_0 + \kappa(s_0-1)).
\]
This gives us a sequence of minima. The first local minimum occurs at
\begin{equation}\label{eq:t_minima}
    \omega t^* = \beta_0 + \arctan\!\bigg[\frac{\omega w_0}{\alpha w_0 + \kappa (s_0 - 1)}\bigg],
\end{equation}
where
\begin{equation*}
\beta_0 =
\begin{cases}
0, & \alpha w_0+\kappa(s_0-1) < 0\ \text{and}\ w_0<0,\\[4pt]
\pi, & \alpha w_0+\kappa(s_0-1) > 0, \\[4pt]
2\pi, & \alpha w_0+\kappa(s_0-1) < 0\ \text{and}\ w_0>0.
\end{cases}
\end{equation*}To find the critical threshold condition, we  express $s(t)$ as
\begin{equation*}
s(t) = 1 + R e^{-\alpha t}\cos(\omega t - \psi),
\end{equation*}
where
\begin{align*}
R &= \sqrt{(s_0-1)^2 + \frac{(w_0+\alpha(s_0-1))^2}{\omega^2}}, \quad 
\tan\psi = \frac{w_0+\alpha(s_0-1)}{\omega (s_0-1)}.
\end{align*}
At the first local minimum $t^*$, set $s'(t^*) = 0$ and solve for $\cos(\omega t- \psi)$ gives
\begin{equation}\label{eq:s_min}
s(t^*) = 1 - \frac{\omega}{\sqrt{\omega^2+\alpha^2}}\;R\,e^{-\alpha t^*}.
\end{equation}
This implies the positivity of $s(t^*)$ is equivalent to
\[
\frac{\omega^2}{\omega^2+\alpha^2}\,R^2\,e^{-2\alpha t^*} < 1.
\]
Since $\omega^2+\alpha^2=\kappa$, the above condition becomes
\[
\big(w_0+\alpha(s_0-1)\big)^2+ \omega^2 (s_0-1)^2 <\ \kappa e^{2\alpha t^*},
\]
where $t^*$ is given as in \eqref{eq:t_minima}. We then obtain the critical threshold inequality in terms of $(p_0,\mu_0)$,
\begin{equation}\label{eq:cs_weak}
    \bigg(\frac{p_0+\alpha \mu_0}{1-\mu_0}\bigg)^2 +\omega^2\bigg(\frac{\mu_0}{1-\mu_0}\bigg)^2< \kappa e^{2\alpha t^*}.
\end{equation}
\end{itemize}
To summarize, a violation of $s(t)>0$ in the weak damping regime, corresponding to a blow-up scenario in the main system \eqref{eqs:EMA}, occurs if and only if the initial condition $(p_0,\mu_0)$ fails to meet the inequality \eqref{eq:cs_weak}.

In conclusion, we have the following global regularity theorem.

\begin{theorem}[Global regularity vs. finite-time blow up]\label{thm: GWP}
Consider the damped EMA system \eqref{eqs:EMA} with radial symmetry \eqref{eq:radial} and initial data \eqref{eq:initial_data} satisfying \eqref{eq:radialzero}. In addition, suppose the potential satisfies $\phi_0''(r)<1$ for all $r>0$, corresponding to a non-vacuous initial density $\rho_0$. The solution $(\rho,\u,\phi)$ develops a singular shock at a finite time $T_c$ and location $r_c=r(T_c; r_0)$, namely
   \begin{equation}
	\lim_{t\to T_*^-}\pa_r u(r_*,t)=-\infty,\quad \lim_{t\to T_*^-}\rho(r_*,t)=\infty,
    \end{equation}

if and only if there exists an $r>0$ such that the following inequalities are met:
 \begin{itemize}

 \item \textit{Strong damping} $(\beta >2\sqrt{\kappa})$
 \begin{align*}
     &\bigg[-\frac{\lambda_1 u_0'(r)+\kappa \phi''_0(r)}{\kappa(1-\phi''_0(r))}\bigg]^{\lambda_2} \geq \bigg[-\frac{\lambda_2 u_0'(r)+\kappa \phi''_0(r)}{\kappa(1-\phi''_0(r))}\bigg]^{\lambda_1},\ \ \textnormal{and}\\
       &\max\bigg\{u'_0(r)+\lambda_2 \phi''_0(r),u'_0(r)+\lambda_1 \phi''_0(r)\bigg\}<0, 
 \end{align*}
 where  \[
    \lambda_{1} = \frac{\beta - \sqrt{\beta^2 - 4\kappa}}{2},\quad  \lambda_{2} = \frac{\beta + \sqrt{\beta^2 - 4\kappa}}{2}.
\]

\item \textit{Critical damping} $(\beta = 2\sqrt{\kappa})$. 
\begin{align*}
  &\frac{u_0'(r)}{u'_0(r)+\alpha \phi''_0(r)} \leq \ln\bigg[-\frac{u_0'(r)+\alpha \phi_0''(r)}{\alpha(1-\phi''_0(r))}\bigg],\ \  \textnormal{and}\\
       &\max\bigg\{u_0'(r),u_0'(r)+\alpha\phi_0''(r) \bigg\}<0, 
\end{align*}
where $\alpha = \frac{\beta}{2}$.

\medskip
\item \textit{Weak damping} $(\beta < 2\sqrt{\kappa})$.
\[
\bigg[\frac{u_0'(r)+\alpha \phi_0''(r)}{1-\phi_0''(r)}\bigg]^2+ \bigg[\frac{\omega\phi_0''(r)}{1-\phi_0''(r)}\bigg]^2 \geq \kappa e^{\beta t^*}, 
\]
where 
\begin{align*}
     t^* = \frac{1}{\omega} \bigg[\beta_0 + &\arctan\bigg(\frac{\omega u_0'(r)}{\alpha u_0'(r) +\kappa \phi_0''(r)}\bigg)\bigg], \quad  \omega = \frac{1}{2}\sqrt{4\kappa-\beta^2}, \quad \alpha = \frac{\beta}{2},
     \\[4pt]
\beta_0 &= 
\begin{cases}
0, & \alpha  u_0'(r) +\kappa \phi_0''(r) < 0\ \text{and}\ u_0'(r)<0,\\[4pt]
\pi, & \alpha u_0'(r) +\kappa \phi_0''(r) > 0 ,\\[4pt]
2\pi, & \alpha u_0'(r) +\kappa \phi_0''(r) < 0\ \text{and}\ u_0'(r)>0.
\end{cases}
\end{align*}
%\[
%t^* = \frac{1}{\omega} \bigg(\beta_0 + \arctan\Big(\frac{\alpha p_0 +\kappa \mu_0}{1-\mu_0}\Big)\bigg)
%\]
\end{itemize}
Otherwise, $\rho$ and $\grad \u$ remain uniformly bounded for all time.
\end{theorem}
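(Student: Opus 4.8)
The plan is to reduce global regularity to the sign of the scalar $s=(1-\mu)^{-1}$ along characteristics, and then read off the dichotomy from the explicit solution of the linear system \eqref{dynamics: w_s}. First I would invoke the reduction of Section~3: by Lemma~\ref{lem:gradu}, the bound \eqref{eq:bound:nu}, and the spectral Monge--Amp\`ere identity \eqref{eq:MAspectral}, uniform boundedness of $\grad\u$ and $\rho$ is equivalent to uniform boundedness of the pairs $(p,\mu)$ and $(q,\nu)$; since both obey the identical closed system \eqref{eq:pmu}, it suffices to analyze \eqref{eq:pmu}. The hypothesis $\phi_0''(r)<1$ gives $\mu_0<1$, hence $s_0=1/(1-\mu_0)>0$, so the transformation \eqref{transformation:ws} is admissible initially and turns \eqref{eq:pmu} into the affine system \eqref{dynamics: w_s}. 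Being affine (globally Lipschitz), \eqref{dynamics: w_s} has a solution $(w(t),s(t))$ that exists and stays bounded on every finite time interval, and one recovers $p=w/s$, $\mu=1-1/s$ as long as $s>0$. Therefore the only finite-time loss of regularity in \eqref{eq:pmu} is $s(T_c)=0$ at some finite $T_c$; at such a time $w(T_c)=s'(T_c)\le0$, so $p\to-\infty$ and $\mu\to-\infty$, while $(r(1-\nu))'=0$ keeps $1-\nu$ finite and positive at the finite location $r_c$ (using $\nu_0<1$, which follows from \eqref{eq:radialzero} and $\phi_0''<1$), so \eqref{eq:MAspectral} forces $\rho\to+\infty$ --- the singular shock \eqref{eq:singularshock}.

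Next I would solve the second-order ODE \eqref{eq:2nd order ODE:S} explicitly in the three damping regimes, as recorded in \eqref{eq:s_strong}--\eqref{eq: w_weak}: in every case the particular solution is the equilibrium $s\equiv1$ and the homogeneous part decays, so $s(t)\to1$ as $t\to\infty$ (also visible from \eqref{eq:derivative:lya}). The pivotal structural claim is that \emph{$s(t)>0$ for all $t\ge0$ if and only if $s$ has no interior local minimum with non-positive value}: since $s$ is continuous with $s(0)=s_0>0$ and $s(t)\to1>0$, any sign change of $s$ forces such a minimum, and conversely positivity at $t=0$, at $t=\infty$, and at every local minimum yields global positivity. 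In the strong and critical regimes $s$ has at most one critical point, so either $s$ decays monotonically to $1$ (positive throughout) or has a single extremum; a maximum gives $s\ge\min\{s_0,1\}>0$, so only a minimum can destroy positivity. In the weak regime $s(t)=1+Re^{-\alpha t}\cos(\omega t-\psi)$ oscillates with critical points spaced by $\pi/\omega$; by \eqref{eq:s_min} its value at the $k$-th local minimum equals $1-\frac{\omega}{\sqrt\kappa}Re^{-\alpha t_k^*}$, which increases in $k$, so the first local minimum $t^*$ given by \eqref{eq:t_minima} realizes the global minimum of $s$ and positivity reduces to $s(t^*)>0$. Whether the relevant extremum is a minimum, and the correct branch/offset $\beta_0$ in \eqref{eq:t_minima}, is decided by the sign of $s''(t^*)$ via the case analyses preceding the theorem.

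Finally, in each regime I would impose $s(t^*)>0$, substitute the explicit $t^*$, simplify to the closed-form inequalities \eqref{eq: cs_strong}--\eqref{eq: cs_strong_assumption}, \eqref{eq: cs_critical}, \eqref{eq:cs_weak} in $(w_0,s_0)$, and rewrite them in $(p_0,\mu_0)=(u_0'(r),\phi_0''(r))$ using $w_0=p_0/(1-\mu_0)$, $s_0-1=\mu_0/(1-\mu_0)$ and $1-\mu_0>0$. Negating these produces exactly the blow-up inequalities in the statement; a shock forms along some characteristic iff they hold for some $r>0$, whereas if the complementary (subcritical) inequalities hold for every $r>0$, then $(p,\mu)$ and $(q,\nu)$ stay bounded on finite intervals and decay to $(0,0)$ uniformly over the bounded set of initial data determined by $\O_0\in H^s$ ($s>n/2$), so $\grad\u$ and $\rho$ are globally bounded and Theorem~\ref{thm: LWP} upgrades to a global smooth solution. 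I expect the main obstacle to be the weak-damping case: rigorously pinning down that the first local minimum is the deepest and handling the three-way split of $\beta_0$ together with the right branch of $\arctan$, so that $s(t^*)>0$ is genuinely equivalent to global positivity; the strong and critical cases are comparatively routine once the at-most-one-extremum fact is in hand.
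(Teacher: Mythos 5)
Your proposal is correct and follows essentially the same route as the paper: it reduces regularity to the positivity of $s=(1-\mu)^{-1}$ via the transformation \eqref{transformation:ws}, solves the affine system \eqref{dynamics: w_s} explicitly in the three damping regimes, and tests positivity at the relevant local minimum of $s$, which is precisely the phase-plane analysis of Section~4.3 that the paper's proof of Theorem~\ref{thm: GWP} invokes. Your remark that in the weak-damping regime the value $1-\tfrac{\omega}{\sqrt{\kappa}}Re^{-\alpha t_k^*}$ at the $k$-th local minimum increases in $k$ (so the first minimum is the deepest) supplies a small justification the paper leaves implicit.
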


\begin{proof}
The proof utilizes the qualitative analysis of the spectral dynamics along characteristic paths, as established through the auxiliary variables $(w(t), s(t))$ and the local well-posedness results in Theorem \ref{thm: LWP}.

For initial data $(\rho_0, u_0, \phi_0)$ satisfying the subcritical conditions, the auxiliary variable $s(t)$ remains strictly positive for all $t \geq 0$ along every characteristic path $X(t; r_0)$. Consequently, the quantities $p(t) = w(t)/s(t)$ and $\mu(t) = 1 - 1/s(t)$ remain uniformly bounded for all time. 

By Lemma \ref{lem:gradu}, the uniform boundedness of $p(\cdot, t)$ implies that $\|\nabla \mathbf{u}(\cdot, t)\|_{L^\infty} < \infty$ for all $t \geq 0$. Furthermore, utilizing the spectral representation of the density $\rho = (1-\mu)(1-\nu)^{n-1}$ and applying the bounds on $\|\nu(r,t)\|_{L^\infty}$ from Lemma \ref{eq:bound:nu}, we conclude that $\|\rho(\cdot, t)\|_{L^\infty}$ is controlled by the uniform bounds on $\mu$. Thus, global regularity of the damped EMA system follows from the regularity criterion \eqref{eq:regularity}.

Conversely, suppose the initial data is supercritical, satisfying the inequalities in Theorem \ref{thm: GWP} for some $r_0 > 0$. The corresponding trajectory in the $(w, s)$ phase plane intersects the line $s = 0$ at a finite time $T_c$. As $t \to T_c^-$, we observe the following:
  \begin{align*}
        \lim_{t\to T_c^-} \mu(t) &= 1- \lim_{t\to T_c^-} \frac{1}{s(t)} = - \infty.\\
         \lim_{t\to T_c^-} p(t) &=  \lim_{t\to T_c^-} \frac{s'(t)}{1-s(t)} = - \lim_{t\to T_c^-} \Big(\log((1-s(t))\Big)' = -\infty.
    \end{align*}
     This implies the ODE system \eqref{eq:pq} with initial data $p(0) = u_0'(r_0), \mu(0) = \phi_0''(r_0)$ becomes unbounded in finite time $T_c$ at $r_c = r(T_c:r_0)$. In vacuum states, while the density remains zero, $u_r$ satisfies the blow-up conditions defined in \eqref{eq:singularshock}, completing the proof.
\end{proof}
\begin{remark}\label{rmk:lowerbound}
In the original analysis of the undamped EMA system \cite{tadmor2022critical}, a necessary condition for global regularity was the density lower bound $\rho_0 \geq 2^{-n}$, where $n$ is the dimension. This restriction is derived from the threshold condition $|u_0'(r)| < \sqrt{\kappa(1-2\phi_0''(r))}$, which implicitly requires the potential to satisfy $\phi_0''(r) \leq 1/2$. Given the spectral representation of the density $\rho = (1-\phi'')(1-\phi'/r)^{n-1}$, they obtained the aforementioned lower bound on $\rho_0$. 

With the introduction of velocity damping, this density bound is effectively removed in the strong and critical damping regimes. Our results demonstrate that for $\beta \geq 2\sqrt{\kappa}$, the system admits globally regular solutions for initial density that are arbitrarily small, and as established in Theorem \ref{thm: GWP_vac}, regularity can even persist in the vacuous state $\rho_0=0$. For example, in Theorem ~\ref{thm: GWP}, when $\beta\geq 2\sqrt{\kappa}$, the condition $u'_0(r)\geq \phi''_0(r)\beta/2$ alone suffices to guarantee global regularity.

In contrast, this relaxation does not extend to the weakly damped regime. As we readily observe from rearranging the supercritical threshold inequality \eqref{eq:cs_weak}, we find that regularity requires:
    \[
    1-\mu_0 > \bigg[\frac{\big(u_0'(r)+\alpha\phi_0''(r)\big)^2+\big(\omega\phi_0''(r)\big)^2}{\kappa e^{2\alpha t^*}}\bigg]^{1/2},
    \]
and similar bounds hold for $\nu$:
\[
 1-\nu_0 > \bigg[\frac{\big(u_0(r)+\alpha\phi_0'(r)\big)^2+\big(\omega\phi_0'(r)\big)^2}{\kappa r^2e^{2\alpha t^*}}\bigg]^{1/2}.
\]
As $\rho = (1-\mu)(1-\nu)^{n-1}$, we derive a necessary density lower bound for global regularity:
\begin{equation}
    \rho_0 >\bigg[\frac{\big(u_0'(r)+\alpha\phi_0''(r)\big)^2+\big(\omega\phi_0''(r)\big)^2}{\kappa e^{2\alpha t^*}}\bigg]^{1/2}\bigg[\frac{\big(u_0(r)+\alpha\phi_0'(r)\big)^2+\big(\omega\phi_0'(r)\big)^2}{\kappa r^2e^{2\alpha t^*}}\bigg]^{n/2}
\end{equation}
In other words, in the weakly damped regime, the density cannot be too low for any initial data to maintain global regularity. This further supports the universal finite time blow-up result for the vacuous state presented in the weakly damped regime in theorem \ref{thm: GWP_vac}. 
\end{remark}
\begin{remark}\label{rmk:initial}
    Furthermore, we observe that the restriction $s_0 > 0$, or equivalently $1 - \mu_0 > 0$, is a natural requirement for the existence of smooth solutions. The explicit phase plane analysis indicates that any trajectory originating from non-positive initial data ($s_0 \leq 0$) will, by the Intermediate Value Theorem, either originate in a singular state or cross the $s=0$ axis in finite time, resulting in finite-time singularity formation. Consequently, this assumption does not limit the scope of our global regularity results.
\end{remark}
\begin{remark}
    As noted earlier, the damped EMA system reduces to the damped Euler-Poisson system in one dimension. The critical thresholds for this 1D case were previously established in \cite{bhatnagar2020critical2}. Our results in Theorem ~\ref{thm: GWP} are consistent with this prior work under their assumptions and recover their thresholds.
\end{remark}

\subsection{Large-time behavior.}
The utility of the auxiliary dynamics for $(w(t),s(t))$ extends beyond the characterization of critical thresholds. The explicit solutions for these quantities also determine the rate at which the system relaxes toward the equilibrium state. Given the transformation  $(p,\mu) = (\frac{w}{s},\frac{s-1}{s})$, it follows that for any subcritical initial data, where $s(t)>0$ for all time $t\geq 0$, the convergence of $(p,\,u)$ to $(0,0)$ is determined by the decay rate of $(w(t),s(t)-1)$. Specifically, we will study the large-time behavior of the following quantities:
\[
\Big(p(r,t),q(r,t),\mu(r,t),\nu(r,t)\Big) = \bigg(\pa_ru(r,t), \frac{u(r,t)}{r}, \pa^2_r\phi(r,t), \frac{\pa_r\phi(r,t)}{r}\bigg).
\]
  Recall the notation $A \lesssim B$ means $A\leq CB$ for some positive constant $C$ which depends on up to parameters $\beta, \kappa,$ dimension $n$, the initial data and in the case of critically damped regime, some small $\eps>0$. 

\begin{remark}\label{rmk: identical_dynamics}
    It's important to emphasize that while the primary focus of our phase plane analysis is the pair $(p,\mu)$, the tangential quantities $(q,\nu)$ satisfy the exact same underlying dynamics as \eqref{eq:pq}. Becuase the evolution of these spectral components is identical, all conclusions derived for $(p,\mu)$, particularly those regarding the large-time asymptotic behavior and the exponential rate of decay, extend directly to $(q,\nu)$ as well.
\end{remark}

\begin{itemize}

\item \textit{Strong damping} $(\beta > 2\sqrt{\kappa})$. From \eqref{eq:s_strong} and \eqref{eq:w_strong}, we obtain the following large time behavior of $(w(t),s(t))$
\begin{equation*}
    |w(t),s(t)-1|\lesssim \begin{cases}
        e^{-\lambda_1 t},\ \ \  \textnormal{if}  \ \ \ A\not =0, \\
         e^{-\lambda_2 t}, \ \ \ \textnormal{if} \ \ \ A= 0,
    \end{cases}
\end{equation*}
 This implies
  \begin{equation*}
          \|p(\cdot, t), \mu(\cdot, t)\|_{L^\infty} \lesssim \begin{cases}
        e^{-\lambda_1 t} , \ \ \ \textnormal{if} \ \ \  \frac{p_0+\lambda_2 \mu_0}{1-\mu_0} \not=0,\\
        e^{-\lambda_2 t}, \ \ \ \textnormal{if} \ \ \  \frac{p_0+\lambda_2 \mu_0}{1-\mu_0} =0.
    \end{cases}
    \end{equation*}
    In addition, from Remark \ref{rmk: identical_dynamics} we realize that the above decay rate also applies to $(q,\nu)$:
    \begin{equation*}
          \|p(\cdot, t), \mu(\cdot, t),q(\cdot, t),\nu(\cdot,t)\|_{L^\infty} \lesssim \begin{cases}
        e^{-\lambda_1 t} , \ \ \ \textnormal{if} \ \ \  \frac{p_0+\lambda_2 \mu_0}{1-\mu_0} \not=0,\\
        e^{-\lambda_2 t}, \ \ \ \textnormal{if} \ \ \  \frac{p_0+\lambda_2 \mu_0}{1-\mu_0} =0.
    \end{cases}
    \end{equation*}
   Next, we investigate the asymptotic behavior of $\rho$. From the spectral representation of $\rho$ \eqref{eq:MAspectral}, we get
\begin{align*}
\rho = (1-\nu)^{n-1}(1-\mu)
  &= 1 - \mu - (n-1)\nu + R(\mu,\nu),
\end{align*}
where the remainder term $R$
%given as
%\[R_{\ge 2}(\mu,\nu) = \sum_{k=2}^{n-1} (-1)^k \binom{n-1}{k}\nu^k
  %   - \mu \sum_{k=1}^{n-1} (-1)^k \binom{n-1}{k}\nu^k,\]
consists of all higher order terms of $\mu$ and $\nu$. This implies the rate at which $\rho$ approaches $1$ is determined by the decay rate of $\mu$ and $\nu$. We then readily observe
\begin{equation*}
    \|\rho(\cdot,t)-1\|_{L^\infty} \lesssim \begin{cases}
        e^{-\lambda_1 t} , \ \ \ \textnormal{if} \ \ \  \frac{u'_0(r)+\lambda_2 \phi''_0(r)}{1-\phi''_0(r)} \not=0,\\
        e^{-\lambda_2 t}, \ \ \ \textnormal{if} \ \ \  \frac{u'_0(r)+\lambda_2 \phi''_0(r)}{1-\phi''_0(r)} =0.
    \end{cases}
\end{equation*}
Similarly, from the decay of $(\mu(\cdot,t),\nu(\cdot,t))$ we conclude
\begin{equation*}
    \|\pa_r^2\phi(\cdot,t), \pa_r\phi(\cdot,r)\|_{L^\infty} \lesssim \begin{cases}
        e^{-\lambda_1 t} , \ \ \ \textnormal{if} \ \ \  \frac{u'_0(r)+\lambda_2 \phi''_0(r)}{1-\phi''_0(r)} \not=0,\\
        e^{-\lambda_2 t}, \ \ \ \textnormal{if} \ \ \  \frac{u'_0(r)+\lambda_2 \phi''_0(r)}{1-\phi''_0(r)} =0.
    \end{cases}
\end{equation*}
In summary, the large time asymptotic behavior of the system is given as
\begin{equation}
    \|\grad\u(\cdot, t), \u(\cdot, t), \rho(\cdot,t)-1, \pa_r^2 \phi(\cdot,t), \pa_r\phi(\cdot,t)\|_{L^\infty} \lesssim \begin{cases}
        e^{-\lambda_1 t} , \ \ \ \textnormal{if} \ \ \  \frac{u'_0(r)+\lambda_2 \phi''_0(r)}{1-\phi''_0(r)} \not=0,\\
        e^{-\lambda_2 t}, \ \ \ \textnormal{if} \ \ \  \frac{u'_0(r)+\lambda_2 \phi''_0(r)}{1-\phi''_0(r)} =0,
    \end{cases}
\end{equation}

\item \textit{Critical damping} $(\beta = 2\sqrt{\kappa})$. Following the same argument, we observe 
\begin{align*} |s(t)-1, w(t)|&\lesssim
    \begin{cases}
           te^{-\alpha t}, \ \ \ \textnormal{if} \ \ \ w_0+\alpha (s_0-1) \not= 0, \\
        e^{-\alpha t}, \ \ \ \textnormal{if} \ \ \ w_0+\alpha(s_0-1) = 0,
          \end{cases}
\end{align*}

It's clear that $te^{-\alpha t}$ still exhibits the asymptotic exponential decay rate of $-\alpha$,
\[
te^{-\alpha t} = e^{-\alpha t (1-\frac{\ln t}{\alpha t})} \xrightarrow{t\to \infty}e^{-\alpha t}.
\]
To ensure uniform boundedness in time, we let $\eps>0$ be arbitrarily small, and use $t\leq \frac{1}{\eps}e^{\eps t}$ to derive
\[
|s(t)-1, w(t)|\lesssim e^{-(\alpha-\eps)t}, \ \ \ \textnormal{if} \ \ \ w_0+\alpha (s_0-1) \not= 0.
\]
This implies

\begin{equation}
    \|\grad\u(\cdot, t), \u(\cdot, t), \rho(\cdot,t)-1, \pa_r^2 \phi(\cdot,t), \pa_r\phi(\cdot,t)\|_{L^\infty} \lesssim \begin{cases}
        e^{-(\alpha-\eps) t} , \ \ \ \textnormal{if} \ \ \  \frac{u'_0(r)+\alpha \phi''_0(r)}{1-\phi''_0(r)} \not=0,\\
        e^{-\alpha t}, \ \ \ \textnormal{if} \ \ \  \frac{u'_0(r)+\alpha \phi''_0(r)}{1-\phi''_0(r)} =0.
    \end{cases}
\end{equation}

\item \textit{Weak damping} $(\beta<2\sqrt{\kappa}).$ The oscillatory solutions \eqref{eq: s_weak} and \eqref{eq: w_weak} decay according to the real part, which gives
\begin{align*}
    |s(t)-1, w(s)| \lesssim e^{-\alpha t}.
\end{align*}
Following our earlier discussion, this implies

\begin{equation}
    \|\grad\u(\cdot, t), \u(\cdot, t), \rho(\cdot,t)-1, \pa_r^2 \phi(\cdot,t), \pa_r\phi(\cdot,t)\|_{L^\infty} \lesssim e^{-\alpha t}.
\end{equation}
\end{itemize}
The following theorem summarizes the exponential decay rates for the damped EMA system \eqref{eqs:EMA} across all damping regimes.
\begin{theorem}\label{thm:decay}
    Let $(\rho,\u,\phi)$ be a global classical solution to the damped EMA system \eqref{eqs:EMA} with radial symmetry. Let $\eps>0$ be arbitrary. Then, the solution converges to the equilibrium state $(\rho,\u) = (1,\mathbf{0})$ at an exponential rate:
    \begin{equation}\label{eq:decay_rate}
        \|\grad\u(\cdot, t), \u(\cdot, t), \rho(\cdot,t)-1, \pa_r^2 \phi(\cdot,t), \pa_r\phi(\cdot,t)\|_{L^\infty} \lesssim e^{-\gamma t},
    \end{equation}
    where the decay constant $\gamma$ is determined by the damping coefficient $\beta$ and the initial condition:
    \begin{itemize}
         \item \textit{Strong damping} $(\beta >2\sqrt{\kappa})$. 
        \begin{equation*}\gamma=
            \begin{cases}
                \frac{\beta-\sqrt{\beta^2-4\kappa}}{2},\quad \textnormal{if} \quad \frac{u'_0(r)+\lambda_2 \phi''_0(r)}{1-\phi''_0(r)} \not=0,\\ 
                 \frac{\beta+\sqrt{\beta^2-4\kappa}}{2} \quad \textnormal{if} \quad \frac{u'_0(r)+\lambda_2 \phi''_0(r)}{1-\phi''_0(r)} =0.
            \end{cases}
        \end{equation*}
\item \textit{Critical damping} $(\beta = 2\sqrt{\kappa})$.   \begin{equation*}\gamma=
           \begin{cases}
        \frac{\beta}{2}-\eps , \ \ \ \textnormal{if} \ \ \  \frac{u'_0(r)+\alpha \phi''_0(r)}{1-\phi''_0(r)} \not=0,\\
        \frac{\beta}{2}, \ \ \ \textnormal{if} \ \ \  \frac{u'_0(r)+\alpha \phi''_0(r)}{1-\phi''_0(r)} =0.
    \end{cases}
        \end{equation*}

\medskip
\item \textit{Weak damping} $(\beta < 2\sqrt{\kappa})$.
\[
\gamma = \frac{\beta}{2}.
\]
    \end{itemize}
\end{theorem}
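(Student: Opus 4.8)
The plan is to derive \eqref{eq:decay_rate} directly from the explicit solution formulas for the auxiliary pair $(w(t),s(t))$ established in the three damping regimes, then transfer the decay back to the original variables through the transformation $(p,\mu) = (w/s,\,(s-1)/s)$, Lemma~\ref{lem:gradu}, Lemma~\ref{eq:bound:nu}, and the spectral representation \eqref{eq:MAspectral}. The key point, already recorded in Remark~\ref{rmk: identical_dynamics}, is that along every characteristic path the radial pair $(p,\mu)$ and the tangential pair $(q,\nu)$ solve the identical closed systems \eqref{eq:pmu} and \eqref{eq:qnu}, so it suffices to analyze one $(w,s)$-trajectory and take a supremum over characteristics at the end.

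First I would examine each regime separately. In the strong damping case, formulas \eqref{eq:s_strong}--\eqref{eq:w_strong} express $s(t)-1$ and $w(t)$ as linear combinations of $e^{-\lambda_1 t}$ and $e^{-\lambda_2 t}$ with $0<\lambda_1\le\lambda_2$; since the coefficient of the slow mode is $A_1 = (w_0+\lambda_2(s_0-1))/(\lambda_2-\lambda_1)$, the decay is governed by $e^{-\lambda_1 t}$ unless $A_1 = 0$, i.e. $\tfrac{p_0+\lambda_2\mu_0}{1-\mu_0}=0$, in which case only the fast mode $e^{-\lambda_2 t}$ survives. In the critical case, \eqref{eq:s_borderline}--\eqref{eq:w_borderline} give the resonant form $(\text{affine in }t)\,e^{-\alpha t}$, and I would absorb the polynomial factor using $t\le \tfrac{1}{\eps}e^{\eps t}$ to obtain the rate $\alpha-\eps$ (or exactly $\alpha$ when the coefficient $w_0+\alpha(s_0-1)$ vanishes). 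In the weak case, \eqref{eq: s_weak}--\eqref{eq: w_weak} are bounded oscillations times $e^{-\alpha t}$, giving rate $\alpha=\beta/2$ outright. For global (subcritical) solutions $s(t)$ stays strictly positive, and by the Lyapunov identity \eqref{eq:derivative:lya} it is also bounded away from $0$ uniformly in $t$ (the trajectory stays in a compact region not touching $s=0$), so $1/s(t)$ is uniformly bounded; hence $|p(t)| = |w(t)/s(t)| \lesssim |w(t)|$ and $|\mu(t)| = |(s(t)-1)/s(t)| \lesssim |s(t)-1|$ inherit the same exponential rate, uniformly over characteristics.

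Next I would propagate these bounds to the physical quantities. Lemma~\ref{lem:gradu} gives $\|\grad\u(\cdot,t)\|_{L^\infty}\le\|p(\cdot,t)\|_{L^\infty}$ and, via \eqref{eq:divrelation} together with $u(0,t)=0$, also controls $\|\u(\cdot,t)\|_{L^\infty}$ on bounded spatial domains; the tangential estimate from Remark~\ref{rmk: identical_dynamics} covers $q=u/r$. Lemma~\ref{eq:bound:nu} gives $\|\nu(\cdot,t)\|_{L^\infty}\le\|\mu(\cdot,t)\|_{L^\infty}$, so $\pa_r^2\phi=\mu$ and $\pa_r\phi/r=\nu$ both decay at the stated rate. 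Finally, expanding the Monge--Amp\`ere relation $\rho=(1-\mu)(1-\nu)^{n-1}=1-\mu-(n-1)\nu+R(\mu,\nu)$ with $R$ collecting quadratic-and-higher terms, and using that $(\mu,\nu)\to(0,0)$ so $R(\mu,\nu)=\mathcal O(|\mu|^2+|\nu|^2)$, shows $\|\rho(\cdot,t)-1\|_{L^\infty}\lesssim\|\mu\|_{L^\infty}+\|\nu\|_{L^\infty}\lesssim e^{-\gamma t}$. Assembling the three regimes gives the tabulated values of $\gamma$.

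The main obstacle I anticipate is establishing the \emph{uniform-in-time} lower bound on $s(t)$ along all characteristics simultaneously (not merely strict positivity for each fixed characteristic): without it one cannot convert decay of $w,s-1$ into decay of $p,\mu$ with a constant independent of $r_0$. I would handle this by noting that for subcritical data the initial point $(w_0,s_0)$ lies in the open subcritical region, the Lyapunov function $\L(w,s)=w^2+\kappa(1-s)^2$ is nonincreasing along trajectories by \eqref{eq:derivative:lya}, and hence every trajectory is trapped in the sublevel set $\{\L\le\L(w_0,s_0)\}$, which for initial data bounded away from the critical level set stays a positive distance from the line $s=0$; combined with the assumption $s_0>0$ (equivalently $\mu_0<1$, Remark~\ref{rmk:initial}) and continuity/compactness of the initial profile, this yields $\inf_{t,r_0} s(t;r_0)>0$. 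A secondary technical point is making the $te^{-\alpha t}\lesssim e^{-(\alpha-\eps)t}$ bound uniform over characteristics, which is immediate since the implied constant $1/\eps$ does not depend on $r_0$.
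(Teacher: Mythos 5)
Your proposal is correct and follows essentially the same route as the paper: read off the decay rates from the explicit $(w,s)$ solutions in each damping regime (distinguishing the slow/fast modes via the coefficient $A_1$, absorbing the resonant factor $te^{-\alpha t}$ with $t\le \eps^{-1}e^{\eps t}$), then transfer back to $(p,q,\mu,\nu)$ through $(p,\mu)=(w/s,(s-1)/s)$, Lemma~\ref{lem:gradu}, Lemma~\ref{eq:bound:nu}, and the expansion $\rho = 1-\mu-(n-1)\nu+R(\mu,\nu)$. Your explicit attention to the uniform-in-time, uniform-in-$r_0$ lower bound on $s$ is a point the paper passes over silently, but it is consistent with the paper's convention that the implied constants may depend on the initial data.
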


\begin{remark}
The quantitative results established in Theorem \ref{thm:decay} may offer a complementary perspective to the geometric framework introduced by G. Loeper \cite{loeper2005quasi}. In the original derivation, the Euler--Monge--Amp\`ere system is presented as a canonical relaxation of geodesics on the group of volume-preserving diffeomorphisms $\mathcal{D}_\mu(\Omega)$ of the domain $\Omega$. However, for unprepared initial data, such approximate geodesics are prone to persistent, high-frequency oscillations, leading to weak convergence results in the quasi-neutral limit. 
Although our analysis is restricted to the radially symmetric setting on $\R^n$ while Loeper's work was posed on the torus $\T^n$, the result raises the question of whether the incorporation of linear damping could stabilize the quasi-neutral convergence. 
\end{remark}

\section{A Lyapunov function based approach to critical thresholds}
In this section, we provide an alternative construction of the critical threshold, represented as part of the solution trajectory to the system \eqref{dynamics: w_s} satisfying certain properties. Let $C$ be a trajectory of \eqref{dynamics: w_s} starting at $ (0,0)$ and traces backwards in negative time, and let the region $R_1$ be defined as $R_1:=\{(w,s)\ : \ w<0, s > 0\}$. 

\begin{itemize}
    \item [(I)]  If $\beta \geq 2\sqrt{\kappa}$, then the trajectory $C$ will not exit the region $R_1$ (see explicit expressions of $C$ in \eqref{eq:strong}, \eqref{eq:medium}, and discussions in Lemma \ref{lem: lifespan}). We define the subcritical region $\Sigma_\flat$ to be the open set in $\R\times \R_+$ to the right of the curve $C$.
    \medskip
    \item [(II)] If $\beta < 2\sqrt{\kappa}$, the trajectory $C$ will exit the region $R_1$ at $(0,s^*)$, where the value of $s^*$ is specified in \eqref{eq: s*}. Let $C_*$ denote the continuation of $C$ that starting at $(0,s^*)$ and intersects $\{s=0\}$ at $(w^*,0)$. We define the subcritical region $\Sigma_\flat$ to be the open set enclosed by $C \cup C_*$ and $\{s = 0\}$. 
\end{itemize}

\begin{figure}[h!]
    \centering
    % --- First Figure ---
    \begin{subfigure}[b]{0.32\textwidth}
        \centering
        \includegraphics[width=\linewidth]{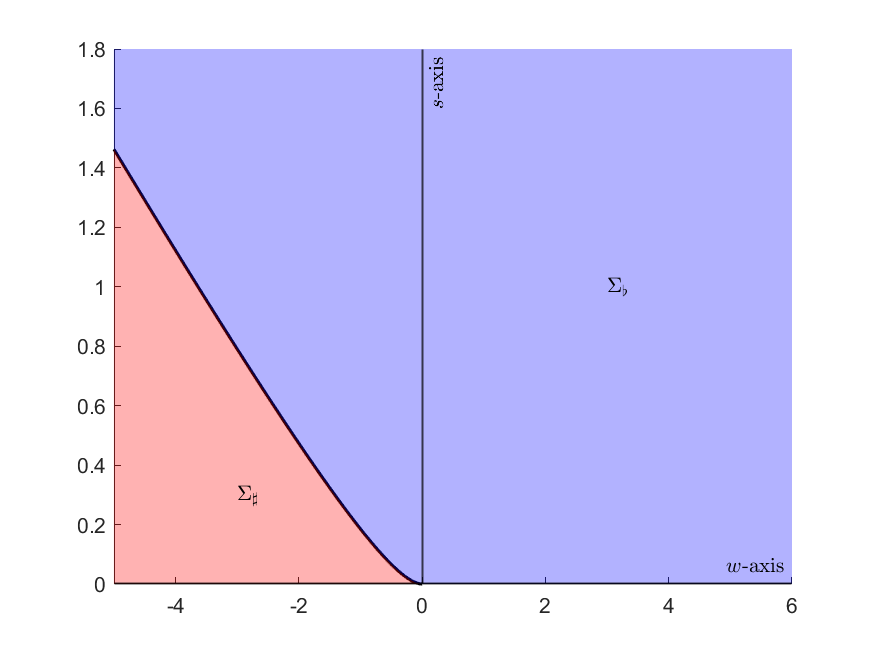}
        \caption{Strong Damping}
        \label{fig:weak}
    \end{subfigure}
    \hfill % Adds flexible space between images
    % --- Second Figure ---
    \begin{subfigure}[b]{0.32\textwidth}
        \centering
        \includegraphics[width=\linewidth]{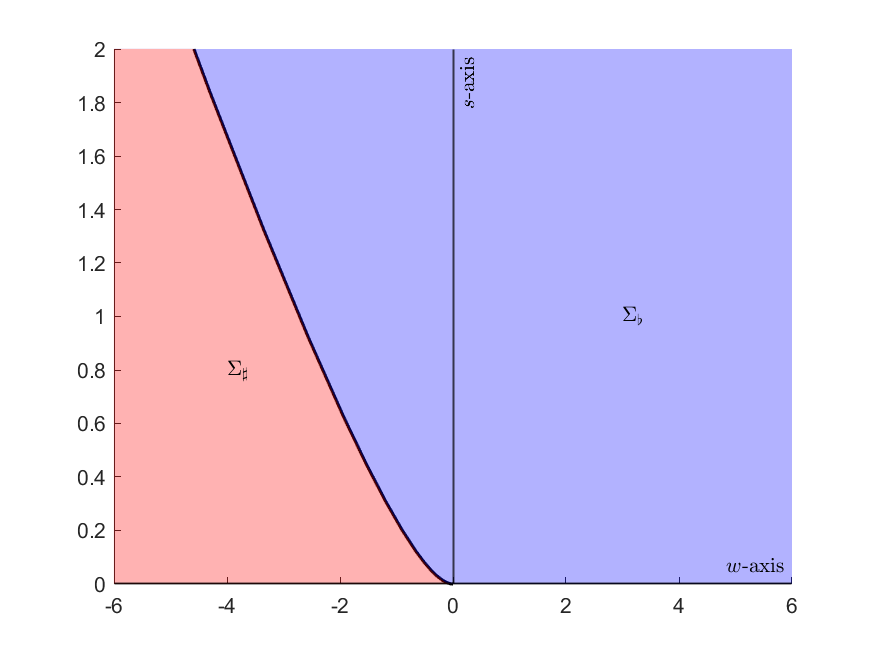}
        \caption{Critical Damping}
        \label{fig:critical}
    \end{subfigure}
    \hfill % Adds flexible space between images
    % --- Third Figure ---
    \begin{subfigure}[b]{0.32\textwidth}
        \centering
        \includegraphics[width=\linewidth]{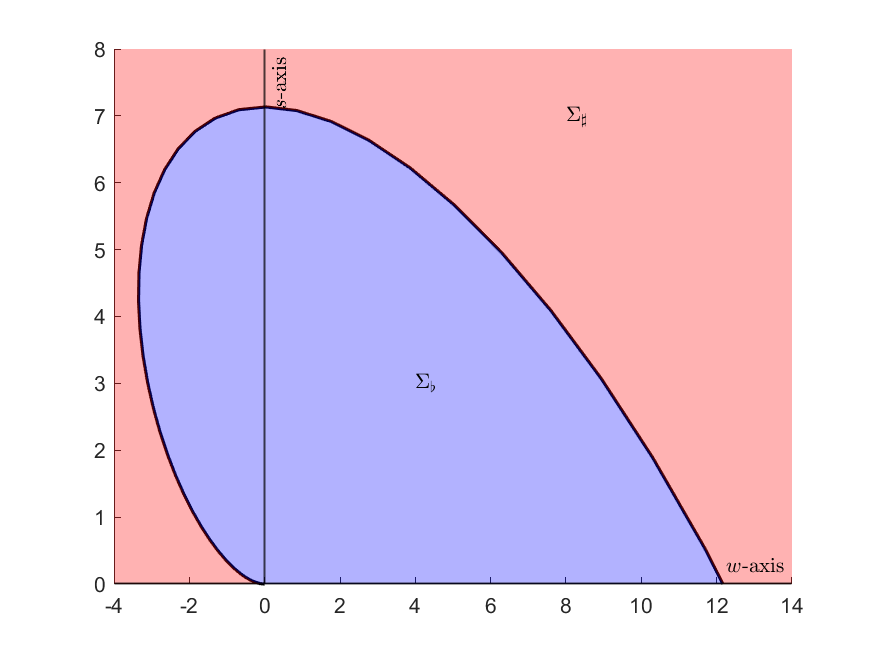}
        \caption{Weak Damping}
        \label{fig:strong}
    \end{subfigure}
    \caption{Phase portraits of the critical thresholds for the three damping regimes.}
    \label{fig:three_graphs}
\end{figure}

Solving for \eqref{dynamics: w_s} explicitly yields the following representations of $C$:

\begin{itemize}
    \item [(I)] \textit{Strong damping} $(\beta > 2\sqrt{\kappa})$. The solutions $(w(t),s(t))$ are given as
    \begin{subequations}\label{eq:strong}
\begin{align}
    s(t) &= -\frac{\lambda_2}{\lambda_2-\lambda_1}e^{-\lambda_1 t}+ \frac{\lambda_1}{\lambda_2-\lambda_1}e^{-\lambda_2 t}+1, \label{s:strong}\\
            w(t) &= \frac{\lambda_1\lambda_2}{\lambda_2-\lambda_1}(e^{-\lambda_1t}-e^{-\lambda_2t}) \label{w:strong},
\end{align}
where   \[
    \lambda_{1} = \frac{\beta - \sqrt{\beta^2 - 4\kappa}}{2},\quad  \lambda_{2} = \frac{\beta + \sqrt{\beta^2 - 4\kappa}}{2}.
\]
\end{subequations}	
Let $C_s$ be a portion of this trajectory defined as 
\begin{equation}\label{eq: C_s}
    C_s:= \Big\{(w(t),s(t)) \ : \ t\in (-\infty,0], \ \big(w(0),s(0)\big) = (0,0) \Big\}.
\end{equation}
    \item [(II)] \textit{Critical damping} $(\beta = 2\sqrt{\kappa})$. The solution $(w(t),s(t))$ is given as
     \begin{subequations}\label{eq:medium}
\begin{align}
    s(t) &=-e^{-\alpha t}-\alpha te^{-\alpha t}+1, \label{s:medium}\\
            w(t) &= \alpha ^2te^{-\alpha t} \label{w:medium}.
\end{align}
Here $\alpha =\beta/2$.
\end{subequations}	
Let $C_c$ be a portion of this trajectory defined as
\begin{equation}\label{eq: C_b}
    C_c:= \Big\{(w(t),s(t)) \ : \  t\in (-\infty,0], \ \big(w(0),s(0)\big) = (0,0) \Big\}.
\end{equation}

\medskip
    \item [(III)] \textit{Weak damping} $(\beta < 2\sqrt{\kappa})$. The Solution $(w(t),s(t))$ is given as
         \begin{subequations}\label{eq:weak}
\begin{align}
    s(t) &=\bigg[-\cos(\omega t)-\frac{\alpha}{\omega}\sin(\omega t)\bigg]e^{-\alpha t}+1, \label{s:weak}\\
            w(t) &= e^{-\alpha t}\bigg(\frac{\kappa}{\omega}\bigg)\sin(\omega t). \label{w:weak}
            %\bigg[\gamma \sin(\gamma t)-\frac{k \gamma}{\sqrt{4-k^2}}\cos(\gamma t)\bigg]e^{-\alpha t}-\alpha\bigg[-\cos(\gamma t)-\frac{k}{\sqrt{4-k^2}}\sin(\gamma t)\bigg]e^{-\alpha t}
\end{align}
\end{subequations}	
Here $\omega = \frac{1}{2}\sqrt{4\kappa - \beta^2}$. Let $\tau: = \sup \{t<0 :\ s(t) = 0\}$ be the first negative time at which $s(\tau) = 0$. Let $C_w$ be a portion of this trajectory defined as
\begin{equation}\label{eq:weak:trajectory}
    C_w := \Big\{(w(t),s(t)) \ : \ t\in [\tau,0], \ \big(w(0),s(0)\big) = (0,0)\Big\}.
\end{equation}
\end{itemize}

The trajectories $C_s, C_c$ and $C_w$ constructed above define the critical threshold curves for their respective damping regimes. A more precise construction of the super- and subcritical regions utilizing the comparison principles will be provided. Note that the critical threshold in our case is \textit{sharp}, hence the supercritical region $\Sigma_\sharp$ is taken to be the complement of the subcritical regions, i.e., $\Sigma_\sharp: =\R\times\R_+ \setminus \Sigma_\flat$.

\begin{remark}
In the construction of subcritical regions for the weakly damped regime, the union $C\cup C_*$ corresponds precisely to the solution trajectory $C_w$ defined in \eqref{eq:weak:trajectory}. The region $\Sigma_\flat$ is well-defined as the trajectory of \eqref{dynamics: w_s} is expanding when traced in negative time. Since $(w^*,0)$ represents the intersection of $C_w$ with $\{s=0\}$ at the first negative time, it follows that $w^* > 0$.
\end{remark}

\begin{remark}
   This construction follows from the observation that each integral curve in the phase plane either remains in the region $\{s>0\}$ for all time, or else reaches the boundary $\{s=0\}$ at some finite time $t_*>0$.

In the latter case, the intersection with $\{s=0\}$ can occur transversely or
tangentially. If the curve touches $\{s=0\}$ tangentially at some time $t_*$, then
$s(t_*)=0$ and $s'(t_*)=0$. By \eqref{dynamics: w_s}, this can happen only at
$(w,s)=(0,0)$.
Consequently, Poincar\'e's uniqueness theorem for autonomous systems implies that distinct integral curves cannot intersect. Hence, the trajectory through $(0,0)$ partitions the family of integral curves of \eqref{dynamics: w_s} into those that remain in $\{s>0\}$ and those that intersect $\{s=0\}$. The following figure depicts the phase plane velocity field and three distinct integral curves.

\begin{figure}[ht!]
    \centering
    % --- First Figure ---
     \includegraphics[width=0.7\linewidth]{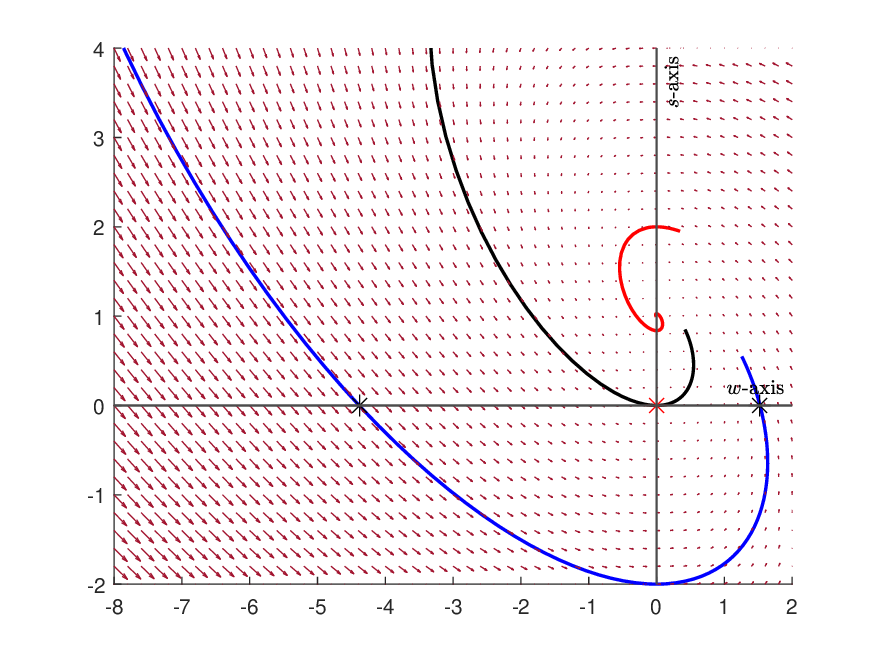}
        \caption{Illustration of the vector field.}
        \label{fig:vector_field}
\end{figure}

\end{remark}
Next, we prove a lemma that verifies the claim made in the construction of the critical threshold.

\begin{lemma}\label{lem: lifespan}
    Let $\beta\geq 2\sqrt{\kappa}$. Then the solution trajectories to the system \eqref{dynamics: w_s}, traced backwards in negative from the initial data
    \[
    w(0) = 0, \ \ \ s(0) = 0,
    \]
    remained confined within the region $R_1 = \{(w,s) : w<0, s>0\}$ for all time $t<0$. In other words, $C_s \subset R_1$ and $C_b \subset R_1$.
\end{lemma}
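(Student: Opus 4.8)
The plan is to read off both conclusions directly from the explicit formulas for the trajectory through the origin --- \eqref{eq:strong} in the strong damping regime and \eqref{eq:medium} in the critical regime --- thereby reducing the lemma to elementary sign considerations.

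First I would establish that $w(t)<0$ for all $t<0$. In the strong case, \eqref{w:strong} writes $w(t)=\tfrac{\lambda_1\lambda_2}{\lambda_2-\lambda_1}\bigl(e^{-\lambda_1 t}-e^{-\lambda_2 t}\bigr)$; since $\beta>2\sqrt{\kappa}$ forces $0<\lambda_1<\lambda_2$, for $t<0$ the prefactor is positive while the bracket is negative (the larger rate $-\lambda_2 t$ dominates), so $w(t)<0$. In the critical case, \eqref{w:medium} gives $w(t)=\alpha^2 t e^{-\alpha t}$ with $\alpha=\beta/2>0$, which is manifestly negative for $t<0$. This is the only place the hypothesis $\beta\ge 2\sqrt{\kappa}$ is genuinely used: it yields real eigenvalues and a non-oscillatory $w$, whereas in the weakly damped regime $w$ changes sign and the trajectory does leave $R_1$ --- which is precisely why that regime is excluded here.

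Next I would observe that $s'(t)=w(t)$ by \eqref{dynamics: w_s}, so that $w(t)<0$ on $(-\infty,0)$ makes $s$ strictly decreasing there; combined with $s(0)=0$ this immediately gives $s(t)>0$ for every $t<0$. (The same positivity can alternatively be read off directly from \eqref{s:strong} and \eqref{s:medium}, but the monotonicity route avoids a short case split.) Putting the two sign facts together shows $(w(t),s(t))\in R_1=\{w<0,\ s>0\}$ for all $t<0$, i.e.\ $C_s\subset R_1$ and $C_c\subset R_1$.

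I do not anticipate a real obstacle; the only point that calls for a moment's care is pinning down the sign of $w(t)$ from \eqref{w:strong}, which rests entirely on the ordering $0<\lambda_1<\lambda_2$ and on the larger-rate exponential being dominant at negative times. An essentially equivalent, formula-free alternative would be a phase-plane argument: one checks that the backward-time vector field enters $R_1$ at the origin (there $w$ turns negative and $s$ increases to second order) and cannot afterwards return to $\{w=0\}$ without $s$ developing an interior extremum, which the real-eigenvalue (monotone) behavior precludes --- but the explicit solutions make this bookkeeping superfluous.
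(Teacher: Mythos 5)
Your proof is correct and follows essentially the same route as the paper: both arguments read the conclusion off the explicit solutions \eqref{eq:strong} and \eqref{eq:medium}. The only difference is in the bookkeeping — you determine the sign of $w(t)$ directly from the ordering $0<\lambda_1<\lambda_2$ (resp.\ from $t e^{-\alpha t}<0$) and then deduce $s(t)>0$ from $s'=w$ with $s(0)=0$, whereas the paper locates the unique critical point of $w$ at a positive time and invokes the asymptotics as $t\to-\infty$; your version is slightly more streamlined and avoids the paper's ambiguous phrasing about $s$ being ``monotone increasing'' for $t<0$.
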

\begin{proof}
    Consider the case $\beta> 2\sqrt{\kappa}$, where solution trajectory is given by \eqref{eq:strong}. We first observe the asymptotic behavior 
    \[
    \lim_{t\to -\infty} s(t) = \infty, \ \ \ \lim_{t\to -\infty} w(t) = -\infty.
    \]
    In addition, we note that $s(t)$ and $w(t)$ both have at most one local extremum. Since $s'(0) = w(0) = 0$, this extremum for $s(t)$ is achieved, implying $s(t)$ is monotone increasing for all $t< 0$. For $w(t)$, we compute its critical point
    \[
    w'(t) = \frac{\lambda_1 \lambda_2}{\lambda_2-\lambda_1} \bigg(-\lambda_1 e^{-\lambda_1 t}+\lambda_2 e^{-\lambda_2 t}\bigg).
    \]
    Equating $w'(t)$ to $0$ and solve for $t$ to obtain
    \[
    t = \frac{1}{\lambda_2-\lambda_1}\ln\bigg(\frac{\lambda_2}{\lambda_1}\bigg) >0.
    \]
    Hence we see that $w(t)$ is monotone decreasing for all $t<0$. Hence $C_s \subset R_1$. Similarly, we conclude $C_c \subset R_1$.
\end{proof}

The quantity $s^*$ which defines the point of intersection earlier and the domains of the Lyapunov functions \eqref{lya:P} and \eqref{lya:N} introduced below is determined from explicitly calculating the $s$-axis intercept of the solution trajectory for the weakly damped system \eqref{dynamics: w_s}. We solve for the first negative time $t^*$ at which $w(t^*) = 0$. To this end, we compute
\begin{align*}
    &e^{-\alpha t^*}\frac{2\kappa}{\sqrt{4\kappa-\beta^2}}\sin(\omega t^*) = 0 \ \Longleftrightarrow   \ \sin(\omega t^*) = 0.
\end{align*}
It's then straightforward to verify that $t^* = -\frac{\pi}{\omega}$. Hence we obtain
\begin{align*}
    s(t^*) &= \bigg(-\cos(-\pi)-\frac{\beta}{\sqrt{4\kappa-\beta^2}}\sin(-\pi)\bigg)e^{\frac{\alpha \pi }{\omega}} +1 =e^{\frac{\beta \pi}{\sqrt{4\kappa-\beta^2}}}+1. \numberthis \label{eq: s*}
\end{align*}

For analytical purpose, it is desirable to represent the critical threshold curve $C$ as a level set of suitable scalar functions of $(w,s)$. Especially in the undamped case $\beta=0$, the trajectories coincide with the level curves of the function \eqref{tt:lyapunov}. To express the solution trajectories and the threshold regions more succinctly, we construct the following Lyapunov functions, based on the approach utilized in \cite{tadmor2022critical}. This idea was first adopted in \cite{bhatnagar2020critical2} for the damped Euler-Poisson equations, and later extended to \cite{choi2025critical} and \cite{luan2025euler}. Let us introduce the first such Lyapunov function, 
\begin{equation}\label{lya:P}
     \L_P(w,s) = w+\sqrt{2P(s)},
\end{equation}
where the function $P(s)$ satisfies the following ODE
\begin{equation}\label{dynamics:P}
    \frac{dP}{ds} = \beta\sqrt{2P(s)}+\kappa(1-s),\quad  P(0) = 0.
\end{equation}
The next theorem precisely characterizes the relationship between the level sets of the above Lyapunov function and the solution trajectories to the system \eqref{dynamics: w_s}.

\begin{theorem}\label{thm:levelset}
    Suppose $\L_P(w,s)$ and $P(s)$ satisfy \eqref{lya:P} and \eqref{dynamics:P}, respectively, and 
    \[
    \textnormal{Range}\{s(t):t\in I \} \subset \textnormal{Dom}(P).
    \]
    Let $(w(t),s(t))$ be a solution to \eqref{dynamics: w_s} for all $t\in I$. If $\L_P(w(t_0), s(t_0)) = 0$ for some $t_0 \in I$, then the following holds:
    \[
    \L_P(w(t),s(t)) = 0, \quad \forall  \ t\in I.
    \]
\end{theorem}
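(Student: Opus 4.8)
The plan is to track the value of the Lyapunov function along the flow and to show it obeys a scalar linear ODE, so that its zero set is open as well as closed in the time interval $I$. Set $g(t):=\L_P(w(t),s(t))=w(t)+\sqrt{2P(s(t))}$, with $\L_P$ and $P$ as in \eqref{lya:P} and \eqref{dynamics:P}. Since $\textnormal{Range}\{s(t):t\in I\}\subset\textnormal{Dom}(P)$ and $P$ is continuous on its domain (with $\sqrt{\cdot}$ continuous at $0$), $g$ is continuous on $I$; it therefore suffices to prove that $Z:=\{t\in I:g(t)=0\}$, which is closed and contains $t_0$, is open in $I$.

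The engine is the following identity. On any open $J\subseteq I$ where $P(s(t))>0$, the map $t\mapsto\sqrt{2P(s(t))}$ is $C^1$, and differentiating $g$ along a solution of \eqref{dynamics: w_s} — using $s'=w$, $w'=-\beta w+\kappa(1-s)$, and \eqref{dynamics:P} written as $P'(s)=\beta\sqrt{2P(s)}+\kappa(1-s)$ — the two $\beta w$ contributions cancel and one is left with
\[
g'(t)=w'(t)+\frac{P'(s(t))\,s'(t)}{\sqrt{2P(s(t))}}=\frac{\kappa\bigl(1-s(t)\bigr)}{\sqrt{2P(s(t))}}\,g(t),
\]
i.e.\ $g'=c(t)g$ with $c(t):=\kappa(1-s(t))/\sqrt{2P(s(t))}$ continuous on $J$. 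Hence $g$ is either identically zero or nowhere zero on each connected component of $\{t\in I:P(s(t))>0\}$; in particular, if $t_1\in Z$ and $P(s(t_1))>0$, then $g\equiv 0$ on a whole neighborhood of $t_1$.

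The remaining, and essential, case is $t_1\in Z$ with $P(s(t_1))=0$. There $g(t_1)=0$ forces $w(t_1)=0$, so the trajectory sits at $(0,s_0)$ with $s_0$ a zero of $P$; moreover $s'(t_1)=w(t_1)=0$ and $s''(t_1)=w'(t_1)=\kappa(1-s_0)$. From \eqref{dynamics:P}, every zero of $P$ satisfies $P'=\kappa(1-s)$ there, and $P(1)>0$ because $P$ is increasing on $[0,1]$; thus $s_0\neq 1$, so $s''(t_1)\neq 0$, $s(t)-s_0\sim\tfrac12 s''(t_1)(t-t_1)^2$, $\sqrt{2P(s(t))}$ vanishes linearly in $|t-t_1|$, and $c$ has a non-integrable singularity at $t_1$. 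I would close the argument in one of two equivalent ways: (a) a one-sided estimate — on the side from which the trajectory approaches $t_1$ (the side on which $\int c=+\infty$, dictated by $\textnormal{sgn}(1-s_0)$, which is the side occurring in the constructions of Section~5), a nowhere-zero $g$ would satisfy $g(t)=g(t_*)\exp(\int_{t_*}^{t}c)$ and blow up, contradicting $g\to g(t_1)=0$; or (b) observe that the zero level set $\{w=-\sqrt{2P(s)}\}$ is, away from the zeros of $P$, tangent to the field \eqref{dynamics: w_s} (the same cancellation shows $dw/ds$ along it equals the orbit slope), hence a union of integral curves, so by Poincar\'e uniqueness the trajectory through $(0,s_0)$ coincides with the explicit critical curve $C$ of \eqref{eq:strong}, \eqref{eq:medium} or \eqref{eq:weak}, on which $\L_P\equiv 0$ by direct substitution. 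Either way $t_1$ is interior to $Z$, so $Z=I$.

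The main obstacle is exactly this degenerate locus $P(s(t))=0$, i.e.\ the point $(0,0)$ — and, in the weakly damped regime, also $(0,s^*)$ with $s^*$ as in \eqref{eq: s*} — where the square root in $\L_P$ destroys differentiability of $g$ and makes $c$ unbounded, so invariance of the zero level set cannot be read off from the linear ODE alone; it is precisely to bridge this that one needs either the (one-sided) blow-up estimate together with continuity of $g$, or the Poincar\'e-uniqueness reduction to the explicit curve $C$. Everything else — the $\beta w$ cancellation in the display and the integrating-factor argument on $\{P(s(t))>0\}$ — is routine.
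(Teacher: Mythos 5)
Your core computation is the same as the paper's: differentiate $g(t)=\L_P(w(t),s(t))$ along the flow, cancel the $\beta w$ terms using \eqref{dynamics:P}, and obtain $g'=c(t)g$ with $c(t)=\kappa(1-s(t))/\sqrt{2P(s(t))}$. The paper stops there and writes the integrating-factor formula \eqref{eq:lya:gronwall}; you are right that this is purely formal at the degenerate locus $P(s(t))=0$, where $c$ is non-integrable and the formula reads $0\cdot\exp(\pm\infty)$. Your identification of this as the real issue, and your one-sided blow-up estimate (a) on the side from which the trajectory approaches the degenerate time $t_1$, are correct and strictly more rigorous than the paper's argument: on an interval where the trajectory stays in $\{w\le 0\}$ and only reaches $(0,0)$ at an endpoint, (a) together with the nowhere-zero/identically-zero dichotomy on $\{P(s(t))>0\}$ gives a complete proof.

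However, your treatment of the other side of $t_1$ fails, and for a structural reason. Alternative (b) asserts that the zero level set $\{w=-\sqrt{2P(s)}\}$ is a union of integral curves and that $\L_P\equiv 0$ on the trajectory through $(0,s_0)$; this is false on the forward branch. Your own local expansion shows it: at $s_0=0$ one has $w(t)\sim\kappa(t-t_1)$ and $\sqrt{2P(s(t))}\sim\kappa|t-t_1|$, so $g(t)\sim 2\kappa(t-t_1)>0$ for $t>t_1$. The orbit through the origin is locally the full parabola $s\approx w^2/(2\kappa)$, whereas the level set $\L_P=0$ is only its half with $w\le 0$; the forward continuation exits the level set immediately. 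Correspondingly, the linear equation $g'=cg$ with $c(\tau)\sim(\tau-t_1)^{-1}>0$ admits nonzero solutions vanishing as $t\to t_1^{+}$, so no one-sided estimate can force $g\equiv0$ forward of $t_1$. Your conclusion that ``$t_1$ is interior to $Z$, so $Z=I$'' is therefore not established --- and cannot be, because the statement is false whenever $I$ extends past the passage time through $(0,0)$. Both your argument (a) and the paper's proof are valid only under the implicit restriction, used throughout Section 5 (e.g.\ Proposition \ref{comparison_principle} assumes $C\subset R_1$), that the trajectory remains in $\{w\le 0,\ s\ge 0\}$ on $I$; with that restriction made explicit, your proof is complete and improves on the paper's.
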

\begin{proof}
    We differentiate $\L_P(w,s)$ with respect to time and compute
    \begin{align*}
        \frac{d}{dt}\L_P(w(t),s(t)) &= w^\prime(t) + \frac{1}{\sqrt{2P(s(t))}}\cdot\frac{dP}{ds}\cdot s^\prime(t)\\
        &= \kappa(1-s(t))-\beta w(t)+\frac{1}{\sqrt{2P(s(t))}}\;\bigg(\beta\sqrt{2P(s(t))}+\kappa(1-s(t))\bigg)\cdot w(t)\\
        &=\kappa(1-s(t))+\frac{\kappa(1-s(t))}{\sqrt{2P(s(t))}}\,w(t)\\
        &=\frac{\kappa(1-s(t))}{\sqrt{2P(s(t))}} \, \L_P(w(t),s(t)).
    \end{align*}
Consequently, we obtain
\begin{equation}\label{eq:lya:gronwall}
    \L_P(w(t),s(t)) = \L_P(w(t_0),s(t_0))\exp \bigg(\int_{t_0}^{t}\frac{\kappa(1-s(\tau))}{\sqrt{2P(s(\tau))}}\,\d \tau\bigg) = 0.
\end{equation}
\end{proof}
It follows from the Theorem ~\ref{thm:levelset} that the condition $P(0) = 0$ is specified so that the zero level set of $\L_P(w,s)$ represent the solution trajectory to \eqref{dynamics: w_s} passing through $(0,0)$ in the region  $R_1 := \{(w,s)\ : \ w<0, s > 0\}$. In the case of weak damping, the solution trajectory $C_w$ will exit the region $R_1$ at $(0,s^*)$. For expression of $s^*$, see \eqref{eq: s*}. To represent the part of $C_w$ in the region $R_2 : = \{(w,s) \ : \ w>0, s>0\}$, we construct an additional Lyapunov function, which satisfies the same property as stated in Theorem \ref{thm:levelset},
\begin{equation}\label{lya:N}
            \L_N(w,s) =w-\sqrt{2N(s)},
\end{equation}
where the function $N(s)$ satisfies the ODE
\begin{equation}\label{dynamics: N}
    \frac{dN}{ds} = -\beta\sqrt{2N(s)}+\kappa(1-s), \quad N(s^*) = 0, \quad s^* = e^{\frac{k\pi}{\sqrt{4-k^2}}}+1.
\end{equation}
A similar relation to \eqref{eq:lya:gronwall} holds for $\L_N$:
\begin{equation}\label{eq: P}
     \L_N(w(t),s(t)) = \L_N(w(t_0),s(t_0))\exp \bigg(\int_{t_0}^{t}\frac{\kappa(s(\tau)-1)}{\sqrt{2N(s(\tau))}}\,\d \tau\bigg).
\end{equation}
The choice of $s^*$ ensures that the union of sets $\{\L_p(w,s)=0\}$  and $\{\L_N(w,s)=0\}$ constitutes the single solution trajectory defined in ~\eqref{eq:weak:trajectory}.

Next, we present a version of the comparison principle based on the Lyapunov functions \eqref{lya:P} and \eqref{lya:N}, as a corollary from \ref{thm:levelset}. 

\begin{proposition}[Comparison principles]\label{comparison_principle}
Let $(w(t),s(t))$ be a solution to the system \eqref{dynamics: w_s}. Denote by $C$ the solution trajectory over the time interval $(t_0,t_1)$, namely 
\[
C: = \{(w(t),s(t)) : \ t\in (t_0,t_1)\}.
\]
Then the following results hold:
\begin{itemize}
    \item Let $R_1 = \{(w,s)\ : \ w<0, s > 0\}$ and suppose $C\subset R_1$, then
    \begin{equation}
        \L_P(w(t_0),s(t_0)) \leq 0  \ \ \Longleftrightarrow \ \  \L_P(w(t),s(t)) \leq 0, \ \ \forall \ \ t\in [t_0,t_1] .
    \end{equation}
    In addition,
    \begin{equation}
         \L_P(w(t_0),s(t_0)) > 0  \ \ \Longleftrightarrow \ \  \L_P(w(t_1),s(t_1)) > 0, \ \ \forall \ \ t\in [t_0,t_1] .
    \end{equation}
    \item Let $R_2 = \{(w,s)\ : \ w > 0, s>0\}$ and suppose $C \subset R_2$, then
     \begin{equation}
        \L_N(w(t_0),s(t_0)) < 0  \ \ \Longleftrightarrow \ \  \L_N(w(t_1),s(t_1)) < 0, \ \ \forall \ \ t\in [t_0,t_1] . 
    \end{equation}
    In addition,
    \begin{equation}
         \L_N(w(t_0),s(t_0)) \geq 0  \ \ \Longleftrightarrow \ \  \L_N(w(t_1),s(t_1)) \geq 0, \ \ \forall \ \ t\in [t_0,t_1] . 
    \end{equation}
\end{itemize}
\end{proposition}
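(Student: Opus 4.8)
The plan is to promote the Gr\"onwall-type identity derived in the proof of Theorem~\ref{thm:levelset} from the zero level set of $\L_P$ to \emph{every} level set, and likewise for $\L_N$ via \eqref{eq: P}, and then read off that the sign of the Lyapunov function is frozen along any trajectory confined to $R_1$ (resp.\ $R_2$). Concretely, the algebraic computation in the proof of Theorem~\ref{thm:levelset} never used that $\L_P$ vanishes: for any solution $(w(t),s(t))$ of \eqref{dynamics: w_s} with $s(t)\in\textnormal{Dom}(P)$ and $P(s(t))>0$ it yields $\tfrac{\d}{\d t}\L_P(w(t),s(t))=\tfrac{\kappa(1-s(t))}{\sqrt{2P(s(t))}}\,\L_P(w(t),s(t))$, whence, integrating this scalar linear ODE,
\[
\L_P(w(t),s(t))=\L_P(w(t_0),s(t_0))\,\exp\!\left(\int_{t_0}^{t}\frac{\kappa(1-s(\tau))}{\sqrt{2P(s(\tau))}}\,\d\tau\right),\qquad t\in(t_0,t_1),
\]
and by continuity of $t\mapsto\L_P(w(t),s(t))$ along $C$ this extends to $t=t_1$. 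Since $C\subset R_1$ forces $P(s(\tau))>0$ along $C$ (see the last paragraph), the integrand is finite and the exponential factor is strictly positive, so $\textnormal{sgn}\,\L_P(w(t),s(t))$ is constant on $[t_0,t_1]$ and equals $\textnormal{sgn}\,\L_P(w(t_0),s(t_0))$.

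With sign preservation in hand, the four $\L_P$-equivalences are immediate. If $\L_P(w(t_0),s(t_0))\le0$, the identity forces $\L_P(w(t),s(t))\le0$ for every $t\in[t_0,t_1]$ (the cases $\L_P=0$ and $\L_P<0$ being preserved separately), in particular at $t=t_1$; the reverse implications follow by evaluating at $t=t_0$. The same reasoning with the strict inequality gives $\L_P(w(t_0),s(t_0))>0\iff\L_P(w(t_1),s(t_1))>0$, and in fact $\L_P>0$ on all of $[t_0,t_1]$. The two $\L_N$-statements are obtained verbatim from \eqref{eq: P}: on $R_2$ one has $N(s(\tau))>0$, so the corresponding exponential is again strictly positive, and the cases $\L_N<0$ and $\L_N\ge0$ propagate exactly as above.

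The one genuinely delicate point --- and the only place where the hypothesis $C\subset R_1$ (rather than merely $s>0$) is used --- is ensuring that $P(s(\tau))$ does not degenerate to $0$ along $C$, so that the integral $\int_{t_0}^{t_1}\kappa(1-s(\tau))/\sqrt{2P(s(\tau))}\,\d\tau$ is finite on the closed interval. This follows from the normalizations $P(0)=0$, $N(s^*)=0$ and the explicit solution curves \eqref{eq:strong}, \eqref{eq:medium}, \eqref{eq:weak}: the zeros of $P$ (resp.\ $N$) occur only at the endpoints of $\textnormal{Dom}(P)$ (resp.\ $\textnormal{Dom}(N)$), which correspond to the origin $(0,0)$ or to the exit point $(0,s^*)$ of the threshold curve --- neither of which lies in the open set $R_1$ (resp.\ $R_2$). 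If one prefers to sidestep this integrability bookkeeping, one may argue purely geometrically: by Picard--Lindel\"of the trajectory $C$ either coincides with the zero level set of $\L_P$ inside $R_1$ (the threshold curve through the origin, by Theorem~\ref{thm:levelset}) or is disjoint from it, and in the latter case $\L_P$ is continuous and nonvanishing on the connected set $C$, hence of constant sign there. Everything else reduces to the algebra already carried out for Theorem~\ref{thm:levelset}, so I do not anticipate any further obstruction.
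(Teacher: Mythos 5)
Your proposal is correct and uses essentially the same two ingredients as the paper's proof: the Gr\"onwall-type exponential identity for $\L_P$ (resp.\ $\L_N$) along trajectories, and, as your fallback, the uniqueness of integral curves of the autonomous system to rule out sign changes. The paper merely reverses the emphasis --- it invokes the integral representation for the non-strict inequalities and relies on Picard--Lindel\"of for the strict ones, thereby sidestepping the integrability bookkeeping near the zeros of $P$ and $N$ that you handle explicitly.
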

\begin{proof}
It suffices to establish the strict inequalities, as the non-strict cases follow directly from the integral representation \eqref{eq:lya:gronwall}. While we could  control the integral in the exponential term in \eqref{eq:lya:gronwall} by following the approach in \cite[Proposition 4.7]{choi2025critical}, the result is an immediate consequence of the uniqueness theorem for autonomous ODE system.

Let $\mathcal{C}_0$ denote the zero level set of $\L_P(w(t),s(t))$, which represents the unique solution trajectory of \eqref{dynamics: w_s} passing through the origin. Consider an arbitrary trajectory $\mathcal{C}$ starting at $(w(t_0), s(t_0))$ such that $\L_P(w(t_0), s(t_0)) \neq 0$. Since $\mathcal{C}$ and $\mathcal{C}_0$ are distinct integral curves of an autonomous system, they must be disjoint for all time. Consequently, $\L_P(w(t), s(t))$ can never vanish. The analogous strict inequality for $\L_N$ follows by the same argument.
\end{proof}

Next, we utilize the above comparison principles to give precise construction and characterization of the super- and subcritical regions.

\begin{itemize}
    \item [(I)] In the region $R_1 = \{(w,s)\ : \ w<0, s > 0\}$, let $C_1$ be a trajectory of \eqref{dynamics: w_s} that passes through $ (0,0)$, which can be represented as the level set $\L_p(w,s) = 0$ by Theorem \ref{thm:levelset}. We claim the lifespan of $P(s)$ is given as follows, 
    \begin{equation}
        \textnormal{Dom}(P) = \begin{cases}
            [0, \infty) \ \ \  \beta\geq 2\sqrt{\kappa},\\
            [0, s^*] \ \ \ 0 < \beta < 2\sqrt{\kappa}.
        \end{cases}
    \end{equation} 
    The claim follows directly from Lemma \ref{lem: lifespan}. Depending on the damping regime, there are two situations:
    \medskip
    \begin{enumerate}
        \item If $\beta \geq 2\sqrt{\kappa}$, $C_1$ won't exit the region $R_1$ and can be represented as $ \{(w,s): w = -\sqrt{2P(s)}, \ s\in [0, \infty)\}.$ In this case, the subcritical region is characterized as 
          \[
    \Sigma_\flat = \Big\{(w,s): w > -\sqrt{2P(s)}, \ s>0\Big\}.
    \]
     Namely, the region $\Sigma_\flat$ consists of the open set in $\R\times \R_+$ to the right of the curve $C_1$. The supercritical region $\Sigma_\sharp$ is defined to be $(\R\times \R_+ )\setminus \Sigma_\flat$, namely: 
     \[
      \Sigma_\sharp= \Big\{(w,s): w \leq -\sqrt{2P(s)}, \ s>0\Big\}.
     \]
     The initial condition of $P$ is chosen so that $\L_P(0,0) = \sqrt{2P(0)} = 0$.
        
        \medskip
        \item If $ \beta < 2\sqrt{\kappa}$, $C_1$ exits the region at $(0, s^*)$, and can be represented as $ \{(w,s): w =-\sqrt{2P(s)}, \ s\in [0, s^*]\}.$ We continue our construction. 
    \end{enumerate}
    
    \medskip
    \item [(II)] In the region $R_2 = \{(w,s)\ : \ w > 0, s>0\}$, let $C_2$ be a trajectory of \eqref{dynamics: w_s} that passes through $(0, s^*)$. The lifespan of $N(s)$ is 
       \begin{equation}
        \textnormal{Dom}(N) =  [0, s^*].
    \end{equation} The trajectory takes the form $ C_2 = \{(w,s): w = \sqrt{2N(s)}, \ s\in [0, s^*]\}$, and it exits the region at $(w^*, 0)$. In this case, the subcritical region is given as
          \[
    \Sigma_\flat = \Big\{(w,s): \sqrt{2N(s)}>w>  -\sqrt{2P(s)}, \ s\in (0, s^*]\Big\}.
    \]
   Namely, the region $\Sigma_\flat$ consists of the open set in $\R\times \R_+$ bounded by the curve $C_1 \cup C_2$. The supercritical region $\Sigma_\sharp$ is defined to be $(\R\times \R_+ )\setminus \Sigma_\flat$. The initial condition of $N$ is chosen so that $\L_N(0, s^*) = -\sqrt{2N(s^*)} = 0.$
\end{itemize}
\begin{remark}

To verify that the constructed set $\Sigma_\flat$ is \emph{invariant} in the sense of \ref{region}, we must show that any solution trajectory initialized within $\Sigma_\flat$ never escapes. We focus on the weak damping case ($\beta < 2\sqrt{\kappa}$), as it includes the case when $\beta\geq 2\sqrt{\kappa}$.

The strict comparison principles established in Proposition \ref{comparison_principle} ensure that a trajectory cannot cross the lateral boundaries defined by the level sets $\L_P(w,s)=0$ and $\L_N(w,s)=0$. Thus, the only potential exit is through the bottom boundary segment $\{(w,0) : 0 < w \leq w^*\}$. However, on this segment, the vertical velocity of the flow is given by $s'(t) = w(t) > 0$. This implies that the vector field points strictly \emph{into} the region $s>0$. 
\end{remark}
The comparison principle based approach yields another \emph{sharp} characterization of the critical thresholds.
\begin{theorem}[Global regularity]\label{thm:GWP_lya}
    Let $(\rho, \u, \phi)$ be a classical solution of the damped EMA system \eqref{eqs:EMA} with radial symmetry \eqref{eq:radial}. Suppose that the initial data $(\u_0, \phi_0)$ lie within the following subcritical region:
    \begin{itemize}
        \item Strong or critical damping $(\beta \geq 2\sqrt{\kappa})$: 
\begin{equation}\label{sub:sm:uphi}
      u_0'(r) > \big(\phi_0''(r)-1\big)\sqrt{2P\bigg(\frac{1}{1-\phi_0''(r)}\bigg)}.
\end{equation}
        \item Weak damping $(\beta < 2\sqrt\kappa)$:
        \begin{equation}\label{sub:weak:uphi}
             \big(1-\phi_0''(r)\big)\sqrt{2N\bigg(\frac{1}{1-\phi_0''(r)}\bigg)} > u_0'(r) > \big(\phi_0''(r)-1\big)\sqrt{2P\bigg(\frac{1}{1-\phi_0''(r)}\bigg)},
        \end{equation}
    \end{itemize}
  for all $r>0$, then $\rho$ and $\grad\u$ are uniformly bounded in all time. Otherwise, suppose there exists an $r>0$ such that either \eqref{sub:sm:uphi} or \eqref{sub:weak:uphi} is violated in their respective damping regimes. Then there exist a location $r_c$ and a finite time $T_c$ at which
    \begin{equation}
        \lim_{t \ \rightarrow \ T_C^-} \pa_ru(r_c,t) = -\infty, \quad \lim_{t \ \rightarrow \ T_C^-}\rho(r_c,t) = \infty.
    \end{equation}
\end{theorem}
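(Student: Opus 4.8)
The plan is to transfer the whole problem to the auxiliary plane through the change of variables \eqref{transformation:ws}: I would first show that the subcritical inequalities \eqref{sub:sm:uphi}, \eqref{sub:weak:uphi} say exactly that the initial point $(w_0,s_0)$ lies in the Lyapunov‑defined region $\Sigma_\flat$, deduce global regularity from forward‑invariance of $\Sigma_\flat$ under the flow \eqref{dynamics: w_s}, and then treat the supercritical case by the complementary statement. For the recasting: since $\phi_0''(r)<1$, the map $(p_0,\mu_0)\mapsto(w_0,s_0)=\bigl(u_0'(r)/(1-\phi_0''(r)),\,1/(1-\phi_0''(r))\bigr)$ is well defined with $s_0>0$, and dividing the two inequalities by the positive number $1-\phi_0''(r)$ rewrites them as $\L_P(w_0,s_0)=w_0+\sqrt{2P(s_0)}>0$ in the regime $\beta\ge 2\sqrt{\kappa}$, with the additional requirement $\L_N(w_0,s_0)=w_0-\sqrt{2N(s_0)}<0$ when $\beta<2\sqrt{\kappa}$ (the latter regime also forcing $s_0\le s^*$, so that $P$ and $N$ are evaluated in their domains). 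By Theorem~\ref{thm:levelset} the zero level sets of $\L_P$ and $\L_N$ are precisely the boundary arcs $C_1,C_2$ of $\Sigma_\flat$, so the hypothesis is exactly $(w_0,s_0)\in\Sigma_\flat$ along every characteristic, and a violation at some $r$ is exactly $(w_0,s_0)\in\Sigma_\sharp$ there.

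Next I would prove that $\Sigma_\flat$ is forward‑invariant under \eqref{dynamics: w_s} and conclude regularity. Fixing a characteristic with $(w_0,s_0)\in\Sigma_\flat$, suppose $T$ is the first time the trajectory reaches $\partial\Sigma_\flat$. The lateral part of $\partial\Sigma_\flat$ consists of arcs $C_1$ (and, for $\beta<2\sqrt{\kappa}$, $C_2$) of the single trajectory through the origin; reaching it at $t=T$ would, by uniqueness for the autonomous system — equivalently by the strict comparison principle of Proposition~\ref{comparison_principle} — force the whole trajectory to coincide with that arc, contradicting that $(w_0,s_0)$ is interior. The rest of $\partial\Sigma_\flat$ is the segment of $\{s=0\}$ where $w>0$; there $s'(T)=w(T)>0$, so the field points into $\{s>0\}$ and escape is impossible. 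The only corner needing separate treatment, $(0,0)$, lies on the trajectory through the origin and is a regular point ($w'=\kappa\neq0$), so the same uniqueness argument applies. Hence $s(t)>0$ for all $t\ge0$ along every characteristic, so $p=w/s$ and $\mu=1-1/s$ stay finite, and — exactly as in the proof of Theorem~\ref{thm: GWP}, using $\phi_0''\in H^s$ with $\phi_0''<1$ so that $s_0$ is bounded and the data keeps a positive distance from $\partial\Sigma_\flat$ — uniformly in $r$ on each finite interval. Lemma~\ref{lem:gradu} then bounds $\|\grad\u(\cdot,t)\|_{L^\infty}$ by $\|p(\cdot,t)\|_{L^\infty}$; since $(q,\nu)$ satisfies the same closed system \eqref{eq:pmu}, $\nu$ lies in the corresponding subcritical set, and the spectral identity $\rho=(1-\mu)(1-\nu)^{n-1}$ together with \eqref{eq:bound:nu} bounds $\|\rho(\cdot,t)\|_{L^\infty}$; global regularity follows from the continuation criterion of Theorem~\ref{thm: LWP}.

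For the converse, suppose $(w_0,s_0)\in\Sigma_\sharp$ along the characteristic through some $r_0$. The Lyapunov identity $\frac{d}{dt}\L(w,s)=-2\beta w^2\le0$ from \eqref{eq:derivative:lya} keeps $(w(t),s(t))$ in a bounded set for all forward time. If $s(t)$ stayed positive for all $t\ge0$, a LaSalle/Barbalat argument (using $\int_0^\infty w^2\,\d t<\infty$ and that $(0,1)$ is the only equilibrium of \eqref{dynamics: w_s}) would force $(w(t),s(t))\to(0,1)$; but $(0,1)\in\Sigma_\flat$, so the trajectory would have to cross from $\Sigma_\sharp$ into $\Sigma_\flat$ while $s>0$, impossible by the same uniqueness/comparison argument. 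Hence there is a finite $T_c$ with $s(T_c)=0$ and $s(t)>0$ for $t<T_c$. Then, as in the proof of Theorem~\ref{thm: GWP}, $\mu(t)=1-1/s(t)\to-\infty$ and $p(t)=w(t)/s(t)\to-\infty$ as $t\to T_c^-$, i.e.\ $\pa_r u(r_c,t)\to-\infty$ at $r_c=r(T_c;r_0)$, while $1-\mu\to+\infty$ and $1-\nu$ stays bounded and bounded away from $0$ on $[0,T_c]$ (from $(r(1-\nu))'=0$ and the boundedness of $r(t),q(t)$ there, using $\phi_0''(r)\le1$), so $\rho(r_c,t)=(1-\mu)(1-\nu)^{n-1}\to+\infty$.

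The main obstacle I expect is the forward‑invariance of $\Sigma_\flat$ in the weak‑damping regime, where the boundary is piecewise ($C_1\cup C_2$ together with a segment of $\{s=0\}$) and one must rule out escape through each arc, through the bottom segment, and through the corners — and, symmetrically, the finite‑time statement in the converse, namely that a supercritical trajectory genuinely reaches $\{s=0\}$ in finite time rather than only asymptotically. A secondary, more routine, difficulty is making all bounds uniform in the characteristic label $r$ (so that $L^\infty$ norms, not merely pointwise values, are controlled) and propagating the estimates to the tangential pair $(q,\nu)$ in order to control the density $\rho$.
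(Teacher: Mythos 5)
Your proposal is correct, and for the regularity direction it follows the paper's route exactly: translate \eqref{sub:sm:uphi}--\eqref{sub:weak:uphi} into $\L_P(w_0,s_0)>0$ (and $\L_N(w_0,s_0)<0$ in the weak regime) by dividing by $1-\phi_0''>0$, identify the zero level sets with the boundary trajectory through the origin via Theorem~\ref{thm:levelset}, establish forward-invariance of $\Sigma_\flat$ (uniqueness along the lateral arcs, inward-pointing field $s'=w>0$ on the bottom segment), and then run the argument of Theorem~\ref{thm: GWP} together with Lemma~\ref{lem:gradu}, the bound \eqref{eq:bound:nu}, the spectral identity $\rho=(1-\mu)(1-\nu)^{n-1}$, and the continuation criterion. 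The one genuine divergence is the supercritical direction: the paper does not prove it but defers to external references, whereas you supply a self-contained argument — monotonicity of $\L(w,s)=w^2+\kappa(1-s)^2$ keeps the trajectory bounded, global asymptotic stability of $(0,1)$ (indeed the system \eqref{dynamics: w_s} is linear with a Hurwitz matrix, so LaSalle is not even needed) forces any trajectory with $s>0$ forever to enter the open set $\Sigma_\flat$, and uniqueness forbids crossing the lateral boundary, yielding a finite $T_c$ with $s(T_c)=0$. This is a clean alternative that makes the theorem self-contained; the only point worth tightening is the degenerate tangential case $(w,s)(T_c)=(0,0)$, where uniqueness identifies the trajectory with the boundary arc and $p=w/s\sim-\sqrt{2\kappa/s}\to-\infty$ still follows, and the fact that $1-\nu$ stays bounded away from zero on $[0,T_c]$, which you correctly extract from $(r(1-\nu))'=0$ (equivalently from the boundedness of the linear $(\tilde w,\tilde s)$ system for the tangential pair). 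Your flagged concern about uniformity in the characteristic label $r$ is legitimate but is not addressed in the paper's own proof either.
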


\begin{proof}
The global regularity is a direct consequence of the invariance of the subcritical region $\Sigma_\flat$. As established previously, any trajectory initialized within $\Sigma_\flat$ remains confined to this set for all $t>0$. Consequently, by invoking the same arguments utilized in the proof of Theorem \ref{thm: GWP}, we obtain global regularity.

Conversely, for data in the supercritical region, the finite-time blowup scenario follows from a phase plane analysis argument. One can follow the approaches highlighted in either \cite[Proposition 4.1]{choi2025critical} or \cite[Proposition 4.5]{luan2025euler} almost verbatim. This completes the proof.
\end{proof}
\begin{remark}
Let us recover the critical threshold for the undamped EMA system with radial symmetry using Theorem \ref{thm:GWP_lya}. Recall that this condition is given as 
\[
|u_0'(r)| < \sqrt{\kappa(1-2\phi_0''(r))}.
\]

In the absence of damping ($\beta=0$), The Lyapunov function takes the form
    $\L(w,s) = w^2+\kappa(1-s)^2$. As verified earlier, $\frac{\d}{\d t} \L(w,s) = 0$, implying that $w^2+\kappa(1-s)^2 = c$ for some constant $c$. Since the critical threshold curve crosses the origin $(0,0)$ in the $(w,s)$-plane, we must have $c = \kappa$. This gives the explicit representation for both $P(s)$ and $N(s)$:
    \[
    P(s) = N(s) = \frac{\kappa-\kappa(1-s)^2}{2}.
    \]
   Substituting $s = \frac{1}{1-\phi_0''(r)}$ into both functions yields
   \[
   P\bigg(\frac{1}{1-\phi_0''(r)}\bigg) = N\bigg(\frac{1}{1-\phi_0''(r)}\bigg) = \frac{\kappa \big(1-2\phi_0''(r)\big)}{2(1-\phi_0''(r)\big)^2}.
   \]
    Then using the subcritical condition \eqref{sub:weak:uphi} in Theorem \ref{thm:GWP_lya}, we obtain the critical threshold condition:
    \[
    \sqrt{\kappa \big(1-2\phi_0''(r)\big)} > u_0'(r)> -\sqrt{\kappa \big(1-2\phi_0''(r)\big)}.
    \]
    This recovers the critical threshold derived in \cite{tadmor2022critical}.
\end{remark}

\appendix

\section{Local well-posedness and regularity criterion}\label{sec:LWP}
Let $\O(\x,t)$ be a vector-valued radial function defined as 
\begin{equation}\label{def:Omega}
    \O(\x,t) = \begin{bmatrix}
        p(|\x|,t) \\
        q(|\x|,t) \\
        \mu(|\x|,t)\\
        \nu(|\x|,t)
    \end{bmatrix} \\ =\begin{bmatrix}
        \pa_r u(r,t) \\
        \frac{u(r,t)}{r}\\
        \pa_r^2\phi(r,t)\\
        \frac{\pa_r\phi(r,t)}{r}
    \end{bmatrix}.
\end{equation}

This allows us to compress the dynamics \eqref{eq:qnu} and \eqref{eq:pmu} into vector valued equation of $\O$:
\begin{equation}\label{eq: Omega}
    \pa_t \O + (\u\cdot \grad)\O = \F(\O), \quad \ \F(\O) = \begin{bmatrix}
        -\Omega_1^2 - \kappa \Omega_3 - \beta \Omega_1\\
        -\Omega_2^2 - \kappa \Omega_4 - \beta \Omega_2\\
        \Omega_1(1-\Omega_3)\\
        \Omega_2(1-\Omega_4)
    \end{bmatrix}.
\end{equation}
Equivalently, we write 
\begin{equation}
    \pa_t \Omega_i + \div(\Omega_i\u) = \Tilde{F_i}(\O), \quad i \in \{1,2,3,4\}
\end{equation}
where the forcing term $\Tilde{F_i}$ is defined as 
\begin{equation}\label{eq:Ftilde}
    \Tilde{\F}(\O):= \F(\O) + (\div \u)\O = \begin{bmatrix}
          -\Omega_1^2 - \kappa \Omega_3 - \beta \Omega_1+\Omega_1(\Omega_1+(n-1)\Omega_2)\\
        -\Omega_2^2 - \kappa \Omega_4 - \beta \Omega_2+\Omega_2(\Omega_1+(n-1)\Omega_2)\\
        \Omega_1(1-\Omega_3)+\Omega_3(\Omega_1+(n-1)\Omega_2)\\
        \Omega_2(1-\Omega_4)+\Omega_4(\Omega_1+(n-1)\Omega_2)
    \end{bmatrix}.
\end{equation}
Here we used the relation 
\begin{equation}\label{eq:div_u}
    \div \u = p(r,t)+ (n-1)q(r,t) = \Omega_1 + (n-1)\Omega_2.
\end{equation}
We prove the Theorem ~\ref{thm: LWP}.
\begin{proof}
    Given any $s\geq 0$, we denote the fractional differential operator by $\Lambda^s: = (-\Delta)^{\frac{s}{2}}$ and the homogeneous semi-norm by 
    \[
    \|f\|_{\dot{H}^s(\R^n)} = \|\Lambda^s f\|_{L^2(\R^n)}.
    \] 
   The notation $\lesssim$ will be used throughout the proof, where $A \lesssim B$ means there exists a constant $C$, depending on the parameters and initial data, such that $A\leq CB$. In addition, we use $\pa_j$ to denote $\frac{\pa}{\pa x_j}$. Let the energy $E_s(t)$ be defined as 
    \[
    E_s(t) := \frac{1}{2}\|\O\|_{H^s(\R^n)}^2 = \frac{1}{2}\|\O\|_{L^2(\R^n)}^2 + \frac{1}{2}\|\Lambda^s \O\|^2_{L^2(\R^n)}.
    \]
     Next, we provide an a priori energy estimate. Let us first consider the $L^2$ energy estimate:
     \begin{align*}
      \frac{1}{2}\frac{d}{dt} \|\O\|^2_{L^2} &=\sum_{i=1}^n\int_{\R^n} (\Tilde{F}_i(\O)-\div(\Omega_i \u))\cdot \Omega_i \ \d x\\
      &= \sum_{i=1}^n \int_{\R^n} -\frac{1}{2}\Omega_i^2 (\div \u) +\Tilde{F}_i(\O)\cdot \Omega_i \  \d x\\
      &\leq \sum_{i=1}^n \bigg(\|\div \u\|_{L^\infty} \|\Omega_i\|_{L^2}^2 + \|\Tilde{F}_i(\O)\|_{L^2} \|\Omega_i\|_{L^2} \bigg)\\
      &\leq \sum_{i=1}^n \bigg(\|\div \u\|_{L^\infty} \|\Omega_i\|_{L^2}^2 + (1+\|\O\|_{L^\infty})  \|\Omega_i\|^2_{L^2} \bigg)\\
      &\lesssim \bigg(1+ \|\div \u\|_{L^\infty}  +\|\O\|_{L^\infty}  \bigg)\|\O\|^2_{L^2}  \lesssim (1+\|\O\|_{L^\infty}) \|\O\|_{L^2}^2.
     \end{align*}
    Observe the quadratic dependence of $\Tilde{F}_i$ on $\O$ leads to the following inequality:
    \[
    \|\Tilde{\F}(\O)\|_{L^2} \lesssim (1+\|\O\|_{L^\infty})\|\O\|_{L^2},
    \]
    which is used in the third to last line. Next, we estimate the $\dot{H}^s$ energy:
    \begin{align*}
        \frac{1}{2}\frac{d}{dt} \|\Lambda^s \O\|_{L^2}^2 &= 
        \sum_{i=1}^n\int_{\R^n} \bigg[\Lambda^s\bigg(\Tilde{F}_i(\O)-\div(\Omega_i \u)\bigg)\bigg]\Lambda^s\Omega_i \ \d x \\
        &= -\sum_{i,j=1}^n \int_{\R^n} \bigg(\Lambda^s \pa_{j}(\Omega_i\cdot u_j)\cdot\Lambda^s \Omega_i \bigg) \ \d x +\sum_{i=1}^n \int_{\R^n} \Lambda^s \Tilde{F}_i(\O) \cdot \Lambda^s \Omega_i \ \d x \\
        &= I + II.
    \end{align*}
    Let us estimate $I$ first. Observe
    \begin{align*}
        I &= -\frac{1}{2}\sum_{i,j=1}^n \int_{\R^n}  (\Lambda^s \Omega_i)^2 \cdot \pa_{j}u_j \ \d x - \sum_{i,j=1}^n \int_{\R^n} [\Lambda^s\pa_{j}, u_j] \ \Omega_i \cdot \Lambda^s \Omega_i \ \d x  = I_1 + I_2.
    \end{align*}
    where we use $[\cdot,\cdot]$ to denote the commutator, whose operation is given as
\[[\mathcal{J},f]g = \mathcal{J}(fg)-f\mathcal{J}g.\]
For $I_1$, we observe from \eqref{eq:div_u} the following holds
\begin{align*}
     \|\div \u\|_{L^\infty}   &\leq \|\Omega_1\|_{L^\infty}+ (n-1)\|\Omega_2\|_{L^\infty} \lesssim \|\O\|_{L^\infty}.
\end{align*}
It then follows
\begin{align*}
    I_1  &= -\frac{1}{2}\sum_{i=1}^n \int_{\R^n} (\Lambda^s \Omega_i)^2 \ \div\u \ \d x \leq \|\div \u\|_{L^\infty} \|\Lambda^s \Omega\|_{L^2}^2 \lesssim \|\O\|_{L^\infty} \|\Lambda^s \Omega\|_{L^2}^2. 
\end{align*}
To estimate $I_2$, we use a Kato-Ponce type commutator estimate,
\begin{align*}
    I_2 &\leq \sum_{i,j=1}^n \int_{\R^n} [\Lambda^s\pa_{j}, u_j]\ \Omega_i \cdot \Lambda^s \Omega_i \ \d x\\
    &\leq \sum_{i,j=1}^n\|[\Lambda^s\pa_{j}, u_j]\ \Omega_i\|_{L^2} \|\Lambda^s \Omega_i\|_{L^2}\\
    &\lesssim \sum_{i,j=1}^n \Big( \|\pa_j u_j\|_{L^\infty}\|\Lambda^s \Omega_i\|_{L^2}+ \|\Lambda^s \pa_j u_j\|_{L^2}\|\Omega_i\|_{L^\infty}  \Big) \|\Lambda^s \Omega_i\|_{L^2}\\
    &\lesssim \|\div \u\|_{L^\infty} \|\Lambda^s \O\|_{L^2}^2 + \|\Lambda^s (\div \u)\|_{L^2}\|\O\|_{L^\infty}\|\Lambda^s \O\|_{L^2}\\
    &\lesssim \|\O\|_{L^\infty} \|\Lambda^s \O\|_{L^2}^2 + \|\Lambda^s (\div \u)\|_{L^2}\|\O\|_{L^\infty}\|\Lambda^s \O\|_{L^2}.
\end{align*}
We readily observe from \eqref{eq:div_u} that the following holds:
\begin{align*}
    \|\Lambda^s (\div \u)\|_{L^2} &\leq \|\Lambda^s \Omega_1\|_{L^2} + (n-1) \|\Lambda^s \Omega_2\|_{L^2} \lesssim \|\Lambda^s \O\|_{L^2}. \\
\end{align*}
Consequently,
\begin{equation*}
    I_2 \lesssim \|\O\|_{L^\infty} \|\Lambda^s \O\|_{L^2}^2.
\end{equation*}
Together the estimates of $I_1$ and $I_2$ give
\begin{equation}\label{estimate:I}
    I\lesssim \|\O\|_{L^\infty} \|\Lambda^s \O\|_{L^2}^2.
\end{equation}
Let us estimate $II$. First observe the quadratic dependence of $\Tilde{\F}$ on $\O$ in \eqref{eq:Ftilde} implies 
\[
\|\Lambda^s (\Tilde{\F})(\O)\|_{L^2} \lesssim (1+ \|\O\|_{L^\infty}) \|\Lambda^s \O\|_{L^2}.
\]
Hence, we obtain
\begin{align*}
    II &=  \sum_{i=1}^n \int_{\R^n} \Lambda^s \Tilde{F}_i(\O) \cdot \Lambda^s \Omega_i \ \d x \\
    &\leq \sum_{i=1}^n  \|\Lambda^s \Tilde{F}_i(\O)\|_{L^2} \|\Lambda^s \Omega_i\|_{L^2}\lesssim (1+\|\O\|_{L^\infty})\|\Lambda^s \O\|_{L^2}^2 \numberthis \label{estimate:II}.
\end{align*}
Combining \eqref{estimate:I} and \eqref{estimate:II} to get
\[
\frac{1}{2}\dfrac{d}{dt} \|\Lambda^s \O\|_{L^2}^2 \lesssim (1+\|\O(\cdot, t)\|_{L^\infty})\|\Lambda^s \O\|_{L^2}^2.
\]
Consequently, we have the following estimate on the energy $E_s(t)$:
\begin{equation}\label{eq:Es}
    \frac{d}{dt}E_s(t) \lesssim (1+ \|\O(\cdot, t))\|_{L^\infty})E_s(t).
\end{equation}

If $s>\frac{n}{2}$, then Sobolev embedding gives local well-posedness and we apply Gr\"onwall inequality to \eqref{eq:Es} to obtain
\[
E_s(t) \leq E_s(0) \exp\bigg[C \int_0^T  (1+\|\O(\cdot, t)\|_{L^\infty})\ \d t\bigg] .
\]
Hence, $E_s(t)$ is bounded as long as \eqref{eq:BKM} holds.
\end{proof}

\bibliographystyle{plain}% or plain, alpha, etc.
\bibliography{bib_EMA}

\end{document}